\documentclass[reqno]{amsart}

\usepackage{amsmath,amsthm,amssymb,amsfonts,mathtools}
\usepackage{microtype}
\usepackage{enumitem}
\usepackage[hidelinks]{hyperref}
\hypersetup{
  pdftitle={Algebraic defect and positive mother-body measures},
  pdfauthor={Boris Shapiro},
  pdfkeywords={Algebraic Cauchy transforms; mother-body measures; algebraic defect; Herglotz functions; quadratic differentials}
}

\newtheorem{theorem}{Theorem}[section]
\newtheorem{proposition}[theorem]{Proposition}
\newtheorem{lemma}[theorem]{Lemma}
\newtheorem{corollary}[theorem]{Corollary}

\newtheorem{question}[theorem]{Question}

\newtheorem{theoremalph}{Theorem}

\theoremstyle{definition}
\newtheorem{definition}[theorem]{Definition}
\newtheorem{example}[theorem]{Example}
\theoremstyle{remark}
\newtheorem{remark}[theorem]{Remark}

\newcommand{\bC}{\mathbb C}
\newcommand{\bR}{\mathbb R}

\newcommand{\M}{\mathcal M}
\newcommand{\Cauchy}{\mathcal C}
\newcommand{\supp}{\operatorname{supp}}
\newcommand{\Conv}{\operatorname{Conv}}
\newcommand{\dist}{\operatorname{dist}}

\newcommand{\TV}{\mathrm{TV}}
\newcommand{\eps}{\varepsilon}
\newcommand{\Hone}{\mathcal H^1}
\newcommand{\Db}{\mathbb D}
\renewcommand{\Re}{\operatorname{Re}}
\renewcommand{\Im}{\operatorname{Im}}

\title[Algebraic defect and mother-body measures]{Algebraic defect and positive mother-body measures}
\author{Boris Shapiro}
\address{Department of Mathematics, Stockholm University, SE-106 91 Stockholm, Sweden}
\email{shapiro@math.su.se}
\date{}

\begin{document}

\begin{abstract}
Continuing the study of mother-body measures with algebraic Cauchy transform,
we associate with a positive algebraic germ $f$, its irreducible equation
$P(z,w)=0$, and a compact convex set $K$ the functional
\[
 \mathfrak D_{P,K}(\mu)=
 \int_K\left|P\bigl(z,\Cauchy_\mu(z)\bigr)\right|^{1/d}\,dA(z),
 \qquad d=\deg_wP,
\]
on the set of positive measures supported in $K$ and whose Cauchy transform has germ $f$ at infinity. We prove
continuity of $\mathfrak D_{P,K}(\mu)$ and attainment of its minimum, characterize zero defect, and obtain
the estimate
\[
 \mu(D(a,r))\le C_1r+C_2\mathfrak D_{P,K}(\mu)/r
\]
away from the zero set of the leading coefficient of $P$, where $D(a,r)$ is the disk of radius $r$ centered at $a$.  We establish a
 dual formula for the defect in the rational case and construct positive algebraic Cauchy
transforms by Herglotz theory, Fuss--Catalan and Raney laws, positive sums,
polynomial pushforwards, and branch graphs.  The passage from zero defect to a
mother body is made under the planar-null hypothesis of the  support.
\end{abstract}

\keywords{Algebraic Cauchy transforms, mother-body measures, algebraic defect,
Herglotz functions, quadratic differentials}

\subjclass[2020]{Primary 31A25; Secondary 30E05, 30E20, 49J45}

\maketitle

\section{Introduction}

This paper continues the study initiated in~\cite{BoSh}. 
We introduce a variational obstruction to the existence of a positive
mother-body measure and construct several new classes of algebraic Cauchy
transforms admitting such measures.

Let
\[
 f(z)=\frac{a_0}{z}+O(z^{-2}),\qquad a_0>0,
\]
be an algebraic germ at infinity, and let $P\in\bC[z,w]$ be its irreducible
defining polynomial.  Following~\cite{BoSh}, $f$ is called \emph{positive}.  Every
positive analytic germ is the exterior Cauchy transform of a positive compactly
supported measure~\cite[Theorem~1]{BoSh}.  A positive \emph{mother-body measure}
for $f$ is such a measure whose support is a finite union of compact
semi-analytic arcs and isolated points and whose Cauchy transform is a branch of
$P(z,w)=0$ in every component of the complement of the support.

Throughout, $dA$ denotes planar Lebesgue measure and $A(E)$ the planar
Lebesgue measure of a measurable set $E$. 
Fix a compact convex set $K$ and let $\M_f(K)$ be the class of positive measures
supported in $K$ whose Cauchy transform has germ $f$ at infinity.  Put
\[
 \mathfrak D_{P,K}(\mu)=
 \int_K\left|P\bigl(z,\Cauchy_\mu(z)\bigr)\right|^{1/d}\,dA(z),
 \qquad d=\deg_wP.
\]
The exponent $1/d$ gives linear growth in $|\Cauchy_\mu|$ away from the zero set
of the leading coefficient of $P$.

Our main results are as follows. 

\begin{theoremalph}\label{thmA}
Assume $\M_f(K)\ne\varnothing$.  Then $\mathfrak D_{P,K}$ is finite and
weak-$*$ continuous on $\M_f(K)$ and attains its minimum
$\delta_P(f;K)$.  Moreover, $\delta_P(f;K)=0$ if and only if there exists
$\mu\in\M_f(K)$ such that
\[
 P\bigl(z,\Cauchy_\mu(z)\bigr)=0
 \qquad\text{for almost every }z\in\bC.
\]
Every minimizer has this property when the minimum is zero.
\end{theoremalph}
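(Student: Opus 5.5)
We prove Theorem~\ref{thmA} by establishing three things in turn: finiteness, weak-$*$ continuity (which with compactness gives attainment of the minimum), and the characterization of zero defect.

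\emph{Finiteness.} Write $P(z,w)=\sum_{j=0}^d p_j(z)w^j$. On the compact set $K$ the coefficients $p_j$ are bounded by some $M$, so
\[
 |P(z,w)|^{1/d}\le C\,(1+|w|)
\]
uniformly for $z\in K$. Hence $\mathfrak D_{P,K}(\mu)\le C\int_K(1+|\Cauchy_\mu|)\,dA$. Since $1/|\zeta-z|$ is locally integrable, Fubini gives
\[
 \int_K|\Cauchy_\mu|\,dA\le \mu(\bC)\,\sup_{\zeta\in K}\int_K\frac{dA(z)}{|\zeta-z|}\le C'\,a_0 .
\]
Here the total mass is $\mu(\bC)=a_0$, because it is fixed by the germ at infinity. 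So the defect is finite.

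\emph{Continuity.} Let $\mu_n\to\mu$ weak-$*$ in $\M_f(K)$. We claim $\Cauchy_{\mu_n}\to\Cauchy_\mu$ in $L^1(K,dA)$. To see this, truncate the kernel: set $k_\eps(z)=1/z$ for $|z|\ge\eps$, and extend it continuously and boundedly by $\bar z/\eps^2$ for $|z|<\eps$. For the truncated kernel, $\int k_\eps(\zeta-z)\,d\mu_n(\zeta)\to\int k_\eps(\zeta-z)\,d\mu(\zeta)$ for every $z$, with a uniform bound, so dominated convergence gives $L^1(K)$ convergence. The remainder is controlled uniformly in $n$:
\[
 \int_K\int_{|\zeta-z|<\eps}\frac{2\,d\mu_n(\zeta)}{|\zeta-z|}\,dA(z)\le 4\pi\eps\, a_0 .
\]
Next, the map $w\mapsto|P(z,w)|^{1/d}$ is continuous with linear growth, uniformly in $z\in K$. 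Passing to an a.e.\ convergent subsequence, the generalized dominated convergence theorem (Vitali, using uniform integrability that follows from the $L^1$ convergence) yields $\mathfrak D_{P,K}(\mu_n)\to\mathfrak D_{P,K}(\mu)$. Since every subsequence has a further subsequence with this limit, the whole sequence converges.

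\emph{Attainment.} $\M_f(K)$ is weak-$*$ closed, because for $|z|$ large the Cauchy transforms converge pointwise, so the germ $f$ persists in the limit. Masses are bounded by $a_0$ and supports lie in $K$, so $\M_f(K)$ is weak-$*$ compact. Continuity then gives that the minimum $\delta_P(f;K)$ is attained.

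\emph{Zero defect.} Suppose the minimum is $0$, attained at $\mu$. Then $P(z,\Cauchy_\mu(z))=0$ for a.e.\ $z\in K$. Outside $K$, $\Cauchy_\mu$ is holomorphic on the unbounded component of $\bC\setminus K$; this set is connected because $K$ is convex. There $\Cauchy_\mu$ agrees with the germ $f$ near $\infty$, so by the identity theorem $P(z,\Cauchy_\mu(z))\equiv0$ on $\bC\setminus K$. This holds for every $\mu\in\M_f(K)$, which explains why the functional only integrates over $K$. Together these give vanishing a.e.\ on $\bC$, and since this argument applies to any minimizer, every minimizer has the property when the minimum is zero. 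Conversely, if $\mu\in\M_f(K)$ satisfies $P(z,\Cauchy_\mu)=0$ a.e., then $\mathfrak D_{P,K}(\mu)=0$, hence $\delta_P(f;K)=0$.

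The main obstacle is the continuity step, namely the $L^1$ convergence of Cauchy transforms under weak-$*$ convergence. It is handled by the uniform local integrability of $1/|z|$ together with the truncation argument above.
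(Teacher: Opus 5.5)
Your proposal is correct and follows the paper's proof essentially step for step. The steps are: $L^1$ convergence of Cauchy transforms via a truncated kernel; an a.e.\ convergent subsequence with Vitali and the subsequence-of-subsequence argument; weak-$*$ compactness of $\M_f(K)$; and the identity theorem on the connected set $\bC\setminus K$ for the zero-defect criterion. The differences are minor. You get uniform integrability of $|P(z,\Cauchy_{\mu_n})|^{1/d}$ from the domination by $C(1+|\Cauchy_{\mu_n}|)$ and the $L^1$ convergence, where the paper uses an $L^q$ bound with $q>1$. Your sequential arguments for continuity and attainment tacitly need, as the paper records, that bounded sets in $C(K)^*$ are weak-$*$ metrizable because $C(K)$ is separable.
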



Write $p_d(z)$ for the leading coefficient of $P$ in $w$, and for $a\in\bC$
and $r>0$ set
\[
 D(a,r):=\{z\in\bC:|z-a|<r\}.
\]

\begin{theoremalph}\label{thmB}
Let $L\subset\bC\setminus p_d^{-1}(0)$ be compact.  There exist
$C_1,C_2,r_0>0$, depending only on $P$, $L$, and $K$, such that
\begin{equation}\label{eq:intro-concentration}
 \mu(D(a,r))\le C_1r+\frac{C_2}{r}\mathfrak D_{P,K}(\mu)
\end{equation}
for $\mu\in\M_f(K)$, $a\in L$, and $0<r\le r_0$.  Consequently, a zero-defect
measure has linear growth on $L$ and no atoms there.  If $\mu$ has an atom of
mass $m$ at a point of $L$, then
\[
 \mathfrak D_{P,K}(\mu)\ge c_{\rm at}m^2
\]
for a constant $c_{\rm at}>0$ depending only on $P$, $L$, $K$, and $a_0$.
\end{theoremalph}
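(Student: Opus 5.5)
The plan is to bound $|\Cauchy_\mu|$ pointwise by $|P(z,\Cauchy_\mu)|^{1/d}$ plus a constant near $L$, and then recover the mass of a disk from a contour integral of $\Cauchy_\mu$, averaged over radii.

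\emph{Step 1: root bounds.} Since $L$ is compact and disjoint from $p_d^{-1}(0)$, choose $r_0>0$ so small that the closed $2r_0$-neighbourhood $L'$ of $L$ still avoids $p_d^{-1}(0)$. On $L'$ we then have $|p_d|\ge c>0$. Write
\[
 P(z,w)=p_d(z)\prod_{j=1}^d\bigl(w-w_j(z)\bigr).
\]
The roots satisfy $|w_j(z)|\le M$ on $L'$, with $M$ depending only on $P$ and $L'$, by the Cauchy bound on roots in terms of coefficient ratios. For $|w|\ge M$ each factor obeys $|w-w_j|\ge |w|-M$. Hence
\[
 |w|\le M+c^{-1/d}|P(z,w)|^{1/d}\qquad (z\in L',\ w\in\bC),
\]
where the case $|w|<M$ is trivial. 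We apply this with $w=\Cauchy_\mu(z)$, which is finite a.e. and locally integrable.

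\emph{Step 2: no contribution from outside $K$.} On the complement of $K$, which is connected because $K$ is convex and compact, $\Cauchy_\mu$ is holomorphic. Near $\infty$ it equals $f$. Therefore $P(z,\Cauchy_\mu(z))$ is holomorphic on $\bC\setminus K$ and vanishes near $\infty$, so it vanishes identically there. It follows that
\[
 \int_E|P(z,\Cauchy_\mu)|^{1/d}\,dA\le\mathfrak D_{P,K}(\mu)
\]
for every measurable set $E$, including sets not contained in $K$.

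\emph{Step 3: contour estimate.} Use $\partial_{\bar z}\Cauchy_\mu=\pi\mu$ in the sense of distributions. For all but countably many $\rho$ we have $\mu(\partial D(a,\rho))=0$. For a.e. such $\rho$, by Fubini and the Cauchy formula $\frac1{2\pi i}\oint_{|z-a|=\rho}\frac{dz}{z-\zeta}=\mathbf 1_{|\zeta-a|<\rho}$, we get
\[
 \mu(D(a,\rho))=\frac1{2\pi i}\oint_{|z-a|=\rho}\Cauchy_\mu(z)\,dz .
\]
Consequently, for $\rho\in(r,2r)$,
\[
 \mu(D(a,r))\le\frac1{2\pi}\int_{|z-a|=\rho}|\Cauchy_\mu|\,|dz| .
\]
Average over $\rho\in(r,2r)$. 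The annulus $\Omega=\{r<|z-a|<2r\}$ lies in $L'$ when $r\le r_0$, so Steps 1 and 2 give
\[
 \mu(D(a,r))\le\frac1{2\pi r}\int_\Omega|\Cauchy_\mu|\,dA
 \le\frac1{2\pi r}\Bigl(3\pi r^2M+c^{-1/d}\mathfrak D_{P,K}(\mu)\Bigr).
\]
This is \eqref{eq:intro-concentration} with $C_1=3M/2$ and $C_2=c^{-1/d}/(2\pi)$. The only points that need care are the justification of the contour identity for a.e. radius and the local integrability of $\Cauchy_\mu$; I expect these to be the main technical step, though both are standard.

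\emph{Step 4: consequences.} If $\mathfrak D_{P,K}(\mu)=0$, the estimate reads $\mu(D(a,r))\le C_1r$, which is linear growth on $L$ and rules out atoms there.

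Now let $\mu$ have an atom of mass $m$ at $a\in L$, and write $\mathfrak D=\mathfrak D_{P,K}(\mu)$. For every $r\le r_0$ we have $m\le C_1r+C_2\mathfrak D/r$, that is,
\[
 \mathfrak D\ge r(m-C_1r)/C_2 .
\]
Take $r=\min(r_0,m/(2C_1))$.
\begin{itemize}
\item If $r=m/(2C_1)$, then $\mathfrak D\ge m^2/(4C_1C_2)$.
\item If $r=r_0$, then $\mathfrak D\ge mr_0/(2C_2)$. Since the total mass of $\mu$ equals $a_0$, we have $m\le a_0$, so $\mathfrak D\ge m^2r_0/(2C_2a_0)$.
\end{itemize}
In both cases $\mathfrak D\ge c_{\rm at}m^2$ with
\[
 c_{\rm at}=\min\bigl(1/(4C_1C_2),\,r_0/(2C_2a_0)\bigr).
\]
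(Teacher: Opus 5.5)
Your proof is correct and follows essentially the paper's route: the pointwise reverse bound $|w|\le M+c^{-1/d}|P(z,w)|^{1/d}$ near $L$, the vanishing of $P(z,\Cauchy_\mu)$ off $K$, a local mass bound $\mu(D(a,r))\le C\,r^{-1}\int|\Cauchy_\mu|\,dA$ obtained from $\partial_{\bar z}\Cauchy_\mu=\pi\mu$, and the same two-case choice $r=\min\{r_0,\,m/(2C_1)\}$ for the atom bound. The only difference is cosmetic: you average the contour identity over radii in $(r,2r)$, which amounts to testing against a Lipschitz radial cutoff supported in the annulus, whereas the paper uses a smooth cutoff on $D(a,2r)$; your version gives slightly better constants.
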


The paper is organized as follows.  Section~\ref{sec:BoSh-background}
recalls the two existence theorems from~\cite{BoSh} used below and explains
their relation to the defect functional.  Section~\ref{sec:moment} introduces
the exterior moment classes and records the required estimates for Cauchy
transforms.  Section~\ref{sec:defect} proves finiteness, weak-$*$ continuity,
and attainment of the algebraic defect, together with the zero-defect
criterion.  Section~\ref{sec:quantitative} establishes the concentration
estimate and its consequences for atoms and linear growth.  Section~\ref{sec:support} relates zero defect to mother-body measures under
the planar-null support hypothesis.  Section~\ref{sec:rational} gives an exact $L^1$ dual formula and a
complete description of zero defect for rational germs.  Section~\ref{sec:constructive}
develops constructive classes, including the Herglotz criterion, the
Fuss--Catalan and Raney families, positive superposition, polynomial
pushforwards, and the branch-graph criterion.  Section~\ref{sec:quadratic}
records the resulting connection with the quadratic-differential theory
of~\cite{BoSh}.  Section~\ref{sec:stability} proves lower semicontinuity and a
recovery-sequence criterion for continuity of the minimum defect.  Section~\ref{sec:open} concludes the paper with open problems concerning
support regularity, effective existence, and quantitative lower bounds.

\section{Known existence results}\label{sec:BoSh-background}

We recall two results from~\cite{BoSh}.  This paper writes the algebraic equation
as $P(\Cauchy,z)=0$, with the Cauchy-transform variable first; below we retain the
present convention $P(z,w)=0$.

The first result concerns only the prescribed germ at infinity.  Its proof gives
the following form of~\cite[Theorem~1]{BoSh}. 

For a finite complex Borel measure
$\mu$ with compact support in $\bC$, let $\Cauchy_\mu(z)$ denote its Cauchy transform, i.e., 
\[
 \Cauchy_\mu(z)=\int_{\bC}\frac{d\mu(\zeta)}{z-\zeta}.
\]

\begin{theorem}[Exterior realization]
\label{thm:BoSh-exterior}
Let
\[
 f(z)=\frac{a_0}{z}+O(z^{-2}),\qquad a_0>0,
\]
be analytic in a neighborhood of infinity.  There exist a real-analytic Jordan
curve $\Gamma\subset\bC$ and a positive measure $\sigma$ of mass $a_0$ supported
on $\Gamma$ such that its Cauchy transform $ \Cauchy_\sigma(z)$ satisfies 
\[
 \Cauchy_\sigma(z)=f(z)
\]
in the unbounded component of $\bC\setminus\Gamma$.  In particular,
$\Cauchy_\sigma$ has germ $f$ at infinity.
\end{theorem}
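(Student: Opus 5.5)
Choose $R>0$ so large that $f$ is analytic in $\{|z|>R\}$ and has the convergent expansion $f(z)=\sum_{k\ge0}a_k z^{-k-1}$ there. For $\rho>R$, parametrize the circle $\Gamma_\rho=\{|z|=\rho\}$ by $z=\rho e^{i\theta}$ and look for $\sigma$ of the form $d\sigma=h(\theta)\,d\theta$ on $\Gamma_\rho$ with $h$ real-analytic. Outside the circle, $\Cauchy_\sigma(z)=\sum_k m_k z^{-k-1}$ with $m_k=\int \zeta^k\,d\sigma$. The requirement $\Cauchy_\sigma=f$ in $\{|z|>\rho\}$ is therefore equivalent to the moment identities
\[
 \int_0^{2\pi}\rho^k e^{ik\theta}h(\theta)\,d\theta=a_k,\qquad k\ge0.
\]
We make the natural choice
\[
 h(\theta)=\frac{1}{2\pi}\Bigl(a_0+2\Re\sum_{k\ge1}\overline{a_k}\,\rho^{-k}e^{ik\theta}\Bigr).
\]
This real function has Fourier coefficients $\hat h(-k)=a_k\rho^{-k}/(2\pi)$ for $k\ge1$, together with their conjugates, and the moment identities then follow by orthogonality. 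Since $|a_k|\le C R'^{\,k}$ for any $R'\in(R,\rho)$, the series converges geometrically, so $h$ is real-analytic (indeed a harmonic function restricted to the circle).

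The only remaining issue, and the only real obstacle, is positivity of $h$. We have
\[
 \Bigl|h-\frac{a_0}{2\pi}\Bigr|\le\frac1\pi\sum_{k\ge1}|a_k|\rho^{-k}\le\frac{C}{\pi}\sum_{k\ge1}(R'/\rho)^k,
\]
and this tends to $0$ as $\rho\to\infty$. Because $a_0>0$, we can fix $\rho$ large enough that $h>0$ everywhere. Then $\sigma$ is a positive measure supported on $\Gamma=\Gamma_\rho$, a real-analytic Jordan curve, and its mass is $\int h\,d\theta=a_0$.

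For the equality $\Cauchy_\sigma=f$, both sides are analytic in $\{|z|>\rho\}$, and there the Laurent coefficients agree by the moment identities. Hence they coincide on the unbounded component of $\bC\setminus\Gamma$, and the germ statement follows immediately. The argument is elementary, and positivity via large $\rho$ is the one step requiring care; even that step is a simple domination of the tail by the leading term $a_0$.
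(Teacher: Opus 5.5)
Your proof is correct, but it follows a different route from the one the paper indicates. The paper does not reprove the statement. It quotes \cite[Theorem~1]{BoSh}, where the curve is adapted to $f$ rather than fixed in advance. One takes a primitive $F$ of $f$ near infinity, so that $u=\Re F=a_0\log|z|+O(|z|^{-1})$ is single-valued because $a_0$ is real. One then lets $\Gamma=\{u=c\}$ for large $c$; this is a real-analytic Jordan curve, being the preimage of a circle under the univalent map $e^{F/a_0}=z+O(1)$. Finally one puts $\sigma=\frac{1}{2\pi i}f(z)\,dz|_\Gamma$. Since $du=0$ along $\Gamma$, this equals $\frac{1}{2\pi}\partial_n u\,ds>0$. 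In fact $u-c=a_0g_\Gamma$, with $g_\Gamma$ the exterior Green function, so $\sigma$ is $a_0$ times the equilibrium measure of $\Gamma$. The exterior Cauchy integral formula then gives $\Cauchy_\sigma=f$ outside $\Gamma$ and $\Cauchy_\sigma\equiv0$ inside. You instead fix a round circle and let the density adapt. With $\zeta=\rho e^{i\theta}$, your density is $h=\frac{1}{2\pi}\bigl(2\Re(\zeta f(\zeta))-a_0\bigr)$. This is the complex density $\frac{1}{2\pi}\zeta f(\zeta)$ of $\frac{1}{2\pi i}f\,d\zeta$ plus the conjugate tail $\frac{1}{2\pi}\overline{\zeta f(\zeta)-a_0}$. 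That tail contains only the modes $e^{ik\theta}$, $k\ge1$, and is therefore invisible to all holomorphic moments. Positivity then comes from a smallness estimate rather than from geometry. Your argument is more elementary and fully explicit: no level-set analysis is needed, and the curve is a circle. The construction in \cite{BoSh} instead yields a canonical measure whose positivity is structural and whose interior Cauchy transform vanishes. Yours has interior transform $-\sum_{j\ge0}\overline{a_{j+1}}\rho^{-2j-2}z^j$, which is nonzero in general. For the only use of the theorem in the paper, the feasibility statement of Proposition~\ref{prop:BoSh-defect-connection}(i), either construction suffices.
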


The measure in Theorem~\ref{thm:BoSh-exterior} is obtained from the equilibrium
measure on a sufficiently large regular level curve of a harmonic primitive of
$f$.  The theorem is an exterior realization statement: it does not assert that
$\Cauchy_\sigma$ follows branches of a fixed algebraic equation in the bounded
component of $\bC\setminus\Gamma$, and hence does not by itself produce a
mother-body measure.

\medskip
We next recall the balanced condition of a bivariate polynomial $P$.  Write
\[
 P(z,w)=\sum_{(j,k)\in S(P)}a_{j,k}z^jw^k,
 \qquad a_{j,k}\ne0,
\]
and put
\[
 m(P)=\min_{(j,k)\in S(P)}(k-j).
\]
Polynomial $P$ is \emph{balanced} if $m(P)=0$.  A diagonal monomial $z^nw^n$
is called \emph{dominant} if
\[
 j\le n\quad\text{and}\quad k\le n
 \qquad\text{for every }(j,k)\in S(P).
\]
Following~\cite{BoSh}, an irreducible balanced polynomial $P$ is called
\emph{excellent balanced} if it has a unique probability branch
$f(z)=z^{-1}+O(z^{-2})$, its support $S(P)$ contains a dominant diagonal
monomial, and this probability branch is the only positive branch of $P(z,w)=0$.
The phrase ``lexicographically bigger'' in~\cite{BoSh} means precisely the
coordinatewise dominance above. \cite[Theorem~5]{BoSh} claims the following. 

\begin{theorem}[Excellent balanced equations] 
\label{thm:BoSh-excellent}
Let $P\in\bC[z,w]$ be excellent balanced, and let
$f(z)=z^{-1}+O(z^{-2})$ be its probability branch.  Then $f$ admits a positive
probability mother-body measure.
\end{theorem}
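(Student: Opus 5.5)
We would follow the strategy of \cite{BoSh}: obtain the mother body as a weak-$*$ limit of discrete measures attached to polynomial eigenfunctions of an exactly solvable differential operator built from $P$. Since $P$ is balanced, every monomial $a_{j,k}z^jw^k$ has $j\le k$. Replacing $w^k$ by $\frac{1}{n^k}\frac{d^k}{dz^k}$ therefore gives a family of linear operators
\[
 T_n=\sum_{(j,k)\in S(P)}a_{j,k}\,z^j\,n^{-k}\frac{d^k}{dz^k},
\]
each of which maps polynomials of degree $\le m$ into polynomials of degree $\le m$. The dominant diagonal monomial $z^Nw^N$ controls the leading behaviour: for each large $n$ one looks for a polynomial eigenfunction $q_n$ of degree $n$ with $T_nq_n=\lambda_nq_n$. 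Equivalently, one solves a spectral problem $T_nq_n=\lambda_n q_n$ in which $\lambda_n$ converges to the constant $\lambda$ for which $P(z,w)-\lambda$ (suitably normalized, possibly absorbed into $P$) has the branch $f$. The diagonal dominance makes the matrix of $T_n$ triangular in the monomial basis. The eigenvalue $\lambda_{n}$ is then determined by the diagonal coefficients, and it is simple for large $n$, so $q_n$ exists and is unique.

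Let $\mu_n$ be the root-counting probability measures of $q_n$. Three steps follow.
\begin{enumerate}[label=(\roman*)]
\item \emph{Tightness.} Show that all zeros of $q_n$ lie in a fixed disk. The natural tool is a Cauchy-type bound from the eigen-equation: at a zero of large modulus, the dominant term $z^Nn^{-N}q_n^{(N)}$ cannot be balanced by the other terms.
\item \emph{Limit equation.} Pass to a subsequence with $\mu_n\to\mu$ weak-$*$. Write the ratios $q_n^{(k)}/(n^kq_n)$ and show they converge to $\Cauchy_\mu^k$ in $L^1_{\rm loc}$. This is the standard argument: take ratios of consecutive derivatives, use that $q_n^{(j)}$ has roots converging to the same $\mu$ (a Gauss--Lucas style statement), and note that $\Cauchy_{\mu_n}\to\Cauchy_\mu$ in $L^1_{\rm loc}$. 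Dividing the eigen-equation by $q_n$ then yields $P(z,\Cauchy_\mu(z))=0$ almost everywhere.
\item \emph{Germ at infinity.} The total mass is $1$, and the germ of $\Cauchy_\mu$ at infinity is a positive branch of $P$. By the uniqueness clause in the excellent balanced hypothesis it must be the probability branch $f$.
\end{enumerate}

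It remains to show that $\mu$ is a genuine mother body. That is, $\supp\mu$ must be a finite union of semi-analytic arcs and points, with $\Cauchy_\mu$ following a branch in each complementary component. Outside $\supp\mu$ the function $\Cauchy_\mu$ is analytic and satisfies $P=0$, so it is a branch there automatically. On the support, $\bar\partial\Cauchy_\mu=\pi\mu$ together with the algebraic equation forces $\mu$ to vanish wherever $\Cauchy_\mu$ is locally a single analytic branch. So $\supp\mu$ is contained in the set where two branches of $P$ meet with equal real parts of the appropriate integrals. This set is a union of the arcs $\Re\int(w_i-w_j)\,dz=\mathrm{const}$ together with branch points, which is semi-analytic.

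The main obstacle is justifying that $\supp\mu$ has zero area, so that $\Cauchy_\mu$ cannot be a non-analytic a.e.\ solution on a set of positive measure. The first approach I would try is to argue that on any open set where $\Cauchy_\mu$ takes values in the finitely many branches almost everywhere, $\bar\partial\Cauchy_\mu$ is a measure concentrated on jump curves. A positive area part would then require an area density equal to $\pi^{-1}\bar\partial w_i=0$, which is impossible for an analytic branch. Making this rigorous needs a BV/structure argument for piecewise-branch functions, and this is where I expect the real difficulty to lie.
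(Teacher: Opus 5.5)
\textbf{There is a genuine gap: the step you flag as the main obstacle is exactly an open problem in this paper, and your proposed fix does not close it.} The paper itself gives no proof of this statement; it records it as what \cite[Theorem~5]{BoSh} ``claims''. Its framework separates precisely the two things your argument conflates. Granting your steps (i)--(iii), you get a probability measure $\mu$ on a disk $K$ with $P(z,\Cauchy_\mu(z))=0$ a.e. That is, $\mu\in\mathcal Z_{P,f}(K)$ and $\delta_P(f;K)=0$, which is all that Proposition~\ref{prop:BoSh-defect-connection}\textup{(ii)} actually uses. Upgrading this to a mother body is Proposition~\ref{prop:motherbody-relation}\textup{(ii)}. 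That step requires $A(\supp\mu)=0$ in order to apply the algebraic-support theorem of \cite{BBB}, and whether this hypothesis is automatic is the open Question~\ref{q:null-support}.

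Your proposed fix fails for three reasons.
\begin{enumerate}[label=(\alph*)]
\item It conflates positive area of $\supp\mu$ with $\mu$ having an area-absolutely-continuous part. The identity $\bar\partial(\Cauchy_\mu-w_i)=0$ a.e. on $E_i=\{\Cauchy_\mu=w_i\}$ is a Sobolev fact. It can exclude, say, an $L^p$ density with $p>1$. It says nothing about a measure with a singular part whose closed support has positive area, for example mass spread over countably many arcs accumulating on a region.
\item The a.e. equation only gives a measurable selection of branches. Nothing bounds the perimeter of the sets $E_i$, so there is no reason for $\Cauchy_\mu$ to be BV, or for $\bar\partial\Cauchy_\mu$ to be a jump measure along curves.
\item Even when $A(\supp\mu)=0$, placing the support inside level sets $\Re\int(w_i-w_j)\,dz=\mathrm{const}$ bounds neither the number of constants nor the number of complementary components. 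So it does not give a \emph{finite} union of semi-analytic arcs; that finiteness is exactly what \cite{BBB} supplies under the null-support hypothesis.
\end{enumerate}

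Steps (i)--(ii) are also less standard than you suggest. Gauss--Lucas only places the zeros of $q_n^{(i)}$ in the convex hull of those of $q_n$. Their root-counting measures $\mu_n^{(i)}$ can converge to a different limit: compare $z^n-1$ with $nz^{n-1}$. Since
\[
 \frac{q_n^{(k)}}{n^kq_n}=\prod_{i=0}^{k-1}\frac{n-i}{n}\,\Cauchy_{\mu_n^{(i)}},
\]
convergence to $\Cauchy_\mu^k$ needs a separate argument that exploits the eigen-equation.

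Existence and uniqueness of the degree-$n$ eigenpolynomial is a nonresonance statement about the diagonal entries
\[
 \lambda_{n,m}=\sum_ja_{jj}\,\frac{m(m-1)\cdots(m-j+1)}{n^j},\qquad m\le n.
\]
It has to be derived from the location and multiplicity of the roots of $\sum_ja_{jj}x^j$ on $[0,1]$, that is, from the branches at infinity. It does not follow from triangularity, which comes from balancedness, nor from the dominant monomial alone.

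Finally, the uniform bound on zeros in (i) is only a heuristic. At a zero of $q_n$ you have no a priori comparison between $q_n^{(N)}$ and the lower derivatives, so this needs a genuine lemma rather than a one-line estimate.
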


  The following consequences explain how these two
results enter the present variational framework.

\begin{proposition}\label{prop:BoSh-defect-connection}
\begin{enumerate}[label=\textup{(\roman*)}]
\item For every positive algebraic germ $f$ there is a compact convex set $K_0$
such that $\M_f(K)\ne\varnothing$ for every compact convex $K\supset K_0$.
Consequently the minimum defect $\delta_P(f;K)$ is defined and attained for all
such $K$.
\item If $P$ is excellent balanced and $f$ is its probability branch, then there
is a compact convex set $K_0$ such that
\[
 \delta_P(f;K)=0
\]
for every compact convex $K\supset K_0$.
\end{enumerate}
\end{proposition}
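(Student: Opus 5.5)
For part (i), the plan is to apply Theorem~\ref{thm:BoSh-exterior} to the germ $f$. A positive algebraic germ is analytic in a neighborhood of infinity, so the theorem gives a real-analytic Jordan curve $\Gamma$ and a positive measure $\sigma$ of mass $a_0$ on $\Gamma$ with $\Cauchy_\sigma=f$ in the unbounded component of $\bC\setminus\Gamma$. In particular $\Cauchy_\sigma$ has germ $f$ at infinity. Let $K_0:=\Conv(\Gamma)$, which is compact and convex. For every compact convex $K\supset K_0$ we have $\supp\sigma\subset\Gamma\subset K$, hence $\sigma\in\M_f(K)$ and $\M_f(K)\ne\varnothing$. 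Theorem~\ref{thmA} then says $\mathfrak D_{P,K}$ is finite and attains its minimum $\delta_P(f;K)$ on $\M_f(K)$. The only point to check here is that "germ at infinity" in the definition of $\M_f(K)$ depends only on the values of $\Cauchy_\sigma$ near $\infty$. This holds because the unbounded component of $\bC\setminus\Gamma$ contains a neighborhood of $\infty$.

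For part (ii), the plan is to invoke Theorem~\ref{thm:BoSh-excellent}. It produces a positive probability mother-body measure $\mu$ for the probability branch $f$; here $a_0=1$. By definition, $\supp\mu$ is a finite union of compact semi-analytic arcs and isolated points, so it is compact, and we set $K_0:=\Conv(\supp\mu)$. On the unbounded component of $\bC\setminus\supp\mu$, $\Cauchy_\mu$ is a branch of $P(z,w)=0$. Near infinity this branch must be $f$: $\mu$ is a probability measure, so $\Cauchy_\mu(z)=z^{-1}+O(z^{-2})$, and $f$ is the unique probability branch. Hence $\mu\in\M_f(K)$ for every compact convex $K\supset K_0$.

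It remains to show $\mathfrak D_{P,K}(\mu)=0$, i.e.\ that $P(z,\Cauchy_\mu(z))=0$ for a.e.\ $z\in K$. In every component of $\bC\setminus\supp\mu$, $\Cauchy_\mu$ coincides with a branch of $P=0$, so the integrand vanishes identically off $\supp\mu$. The support is a finite union of semi-analytic arcs and points, so $A(\supp\mu)=0$. Therefore the integrand vanishes a.e.\ in $K$, giving $\mathfrak D_{P,K}(\mu)=0$. Since the defect is nonnegative, $\delta_P(f;K)=0$; this is also consistent with the zero-defect criterion of Theorem~\ref{thmA}.

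The main obstacle I expect is interpretive rather than technical. One must make sure the mother-body measure from Theorem~\ref{thm:BoSh-excellent} really has germ $f$ at infinity, and not merely some positive branch. The excellent-balanced hypothesis settles this, since the probability branch is the only positive branch. A second point to note is that part (ii) relies on the cited claim of \cite{BoSh} as stated. Finally, "branch in each component" should be read so that $\Cauchy_\mu$ is a genuine single-valued solution of $P(z,\Cauchy_\mu(z))=0$ there, which is exactly what makes the integrand vanish off the Lebesgue-null support.
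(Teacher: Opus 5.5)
Your proposal is correct and follows the same route as the paper. For (i) you take $K_0=\Conv(\Gamma)$ from Theorem~\ref{thm:BoSh-exterior} and invoke Theorem~\ref{thmA}; for (ii) you take $K_0=\Conv(\supp\mu)$ for the mother body from Theorem~\ref{thm:BoSh-excellent} and note that $P(z,\Cauchy_\mu(z))=0$ off the planar-null support. Your additional checks (the germ is determined near $\infty$, the mother body's germ is $f$, and $A(\supp\mu)=0$) spell out what the paper's short proof leaves implicit.
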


\begin{proof}
For \textup{(i)}, apply Theorem~\ref{thm:BoSh-exterior} and take
$K_0=\Conv(\Gamma)$.  The resulting measure belongs to $\M_f(K)$ whenever
$K\supset K_0$; attainment then follows from Theorem~\ref{thmA}.  For
\textup{(ii)}, let $\mu$ be the positive mother-body measure supplied by
Theorem~\ref{thm:BoSh-excellent} and take
$K_0=\Conv(\supp\mu)$.  Since
$P(z,\Cauchy_\mu(z))=0$ almost everywhere, $\mathfrak D_{P,K}(\mu)=0$ for every
$K\supset K_0$.
\end{proof}

Thus Theorem~\ref{thm:BoSh-exterior} supplies feasibility, whereas
Theorem~\ref{thm:BoSh-excellent} supplies zero defect.  The distinction is
essential: an exterior representing measure need not be a mother body.  The
constructive results in Section~\ref{sec:constructive} extend the second theorem
beyond the excellent-balanced class; in particular, the Fuss--Catalan family is
balanced but not excellent balanced and nevertheless has zero defect.

\section{Moment classes and algebraic germs}\label{sec:moment}

Recall that for any compactly supported measure $\mu$, its Cauchy transform $\Cauchy_\mu(z)$ converges absolutely for almost every $z$ and defines an element of
$L^1_{\mathrm{loc}}(\bC)$, holomorphic off $\supp\mu$.  Distributionally,
\begin{equation}\label{eq:dbar-Cauchy}
 \partial_{\bar z}\Cauchy_\mu=\pi\mu ,
\end{equation}
and $\Cauchy_\mu$ is the unique $L^1_{\mathrm{loc}}$ solution of
\eqref{eq:dbar-Cauchy} tending to $0$ at infinity.  Near infinity,
\[
 \Cauchy_\mu(z)=\sum_{n\geq 0}\frac{m_n(\mu)}{z^{n+1}},
 \qquad
 m_n(\mu)=\int_{\bC}\zeta^n\,d\mu(\zeta).
\]

Let $f$ be a positive algebraic germ at infinity,
\[
 f(z)=\frac{a_0}{z}+O(z^{-2}),\qquad a_0>0,
\]
and choose its irreducible defining polynomial
\begin{equation}\label{eq:minimal-polynomial}
 P(z,w)=\sum_{j=0}^{d}p_j(z)w^j\in\bC[z,w],
 \qquad d=\deg_wP\geq 1,\quad p_d\not\equiv0,
\end{equation}
so that $P(z,f(z))=0$ near infinity.  The polynomial $P$ is unique up to a
nonzero constant factor.  Set 
\begin{equation}\label{eq:polar-set}
 \Pi_P:=p_d^{-1}(0)\subset\bC.
\end{equation}
Note that  off $\Pi_P$ the equation $P(z,w)=0$ has exactly $d$ roots counted
with multiplicity, and these are locally bounded (Lemma~\ref{lem:reverse}
below).  The set $\Pi_P$ contains the poles of every branch of the algebraic
function.

Write
\[
 f(z)=\sum_{n\ge0}\frac{m_n(f)}{z^{n+1}}
\]
near infinity.  Suppose first that a positive compactly supported representative
$\sigma$ of the germ $f$ is given, i.e. $\Cauchy_\sigma(z)=f$ near $\infty$ and set $K=\Conv(\supp\sigma)$.  More generally,
for any compact convex set $K\subset\bC$ define the exterior moment class
\begin{equation}\label{eq:moment-class}
 \M_f(K)=\left\{\mu\ge0:\ \supp\mu\subset K,\quad
 m_n(\mu)=m_n(f)\ \text{for every }n\ge0\right\}.
\end{equation}
Equivalently, $\Cauchy_\mu$ has the germ $f$ at infinity.  Since
$\bC\setminus K$ is connected, all measures in $\M_f(K)$ have the same Cauchy
transform on $\bC\setminus K$; we continue to denote this common holomorphic
function by $f$.  Moreover, $P(z,\Cauchy_\mu(z))$ vanishes there by the identity
theorem, because it vanishes near infinity.  If
$K=\Conv(\supp\sigma)$, then $\sigma\in\M_f(K)$, so the class is nonempty.

Fix a bounded open set $\Omega$ with $K\subset\Omega$.
 For a finite complex Borel measure $\mu$ on $K$, we write $|\mu|$ for its
\emph{total-variation measure} and
\begin{equation}\label{eq:TV-definition}
 \|\mu\|_{\TV}:=|\mu|(K).
\end{equation}
Equivalently,
\[
 \|\mu\|_{\TV}
 =\sup\left\{\left|\int_K\varphi\,d\mu\right|:
               \varphi\in C(K),\ \|\varphi\|_\infty\le1\right\}.
\]
Thus $\|\mu\|_{\TV}$ measures the total size of a complex measure, including
possible cancellation.  In the applications below the measures are usually
positive, in which case $|\mu|=\mu$ and $\|\mu\|_{\TV}=\mu(K)$ is simply the
total mass.

\begin{lemma}[Compactness and convexity of the constrained class]
\label{lem:compact-class}
$\M_f(K)$ is convex and weak-$*$ compact in the space $\M(K)=C(K)^*$ of finite
measures on $K$.  More generally, for every $M>0$ the set
\[
 \mathcal B_M(K)=\{\mu\in\M(K):\|\mu\|_{\TV}\leq M\}
\]
is weak-$*$ compact and weak-$*$ metrizable, so that weak-$*$ continuity on
$\mathcal B_M(K)$ may be tested on sequences.
\end{lemma}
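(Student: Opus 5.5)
The plan is to apply Banach--Alaoglu to the bounded sets $\mathcal B_M(K)$, and then show that $\M_f(K)$ is a weak-$*$ closed subset of one of them.

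\emph{Step 1: the balls $\mathcal B_M(K)$.} By the Riesz representation theorem, $\M(K)=C(K)^*$ with the total-variation norm \eqref{eq:TV-definition}. Hence $\mathcal B_M(K)$ is the closed ball of radius $M$ in this dual space, and Banach--Alaoglu gives its weak-$*$ compactness. For metrizability we use that $K$ is a compact metric space, so $C(K)$ is separable. Fix a dense sequence $(\varphi_k)$ in the unit ball of $C(K)$, for instance polynomials in $z,\bar z$ with rational coefficients, rescaled. Then
\[
 \rho(\mu,\nu)=\sum_k 2^{-k}\Bigl|\int\varphi_k\,d(\mu-\nu)\Bigr|
\]
is a metric on $\mathcal B_M(K)$. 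The weak-$*$ topology is finer than the $\rho$-topology, since each term is weak-$*$ continuous and the series converges uniformly. The two topologies are Hausdorff, and $\mathcal B_M(K)$ is compact in the finer one, so they coincide there. In a metrizable space, continuity can be tested on sequences.

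\emph{Step 2: convexity of $\M_f(K)$.} The constraints in \eqref{eq:moment-class} are positivity, support in $K$, and the equalities $m_n(\mu)=m_n(f)$. Each is preserved under convex combinations: positivity and the support condition clearly are, and the moment equalities are affine in $\mu$.

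\emph{Step 3: $\M_f(K)$ lies in a ball and is closed.} Every $\mu\in\M_f(K)$ is positive with $\|\mu\|_{\TV}=\mu(K)=m_0(\mu)=m_0(f)=a_0$, so $\M_f(K)\subset\mathcal B_{a_0}(K)$. Within $\M(K)$, the set $\M_f(K)$ is cut out by the conditions
\[
 \int\varphi\,d\mu\ge0\quad\text{for all }\varphi\in C(K),\ \varphi\ge0,
 \qquad
 \int\zeta^n\,d\mu=m_n(f)\quad\text{for all }n\ge0.
\]
Each of these is a weak-$*$ closed condition, because $\zeta^n|_K\in C(K)$ and $\varphi\in C(K)$ are admissible test functions. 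The support condition is automatic, since measures in $\M(K)$ already live on $K$. Therefore $\M_f(K)$ is a weak-$*$ closed subset of the compact set $\mathcal B_{a_0}(K)$, and so it is compact.

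\emph{Main obstacle.} Nothing here is deep. The only point needing care is the identification of the weak-$*$ topology with the metric topology in Step 1, which rests on the compact-versus-Hausdorff argument. One should also note that the positivity constraint is what supplies the uniform mass bound $a_0$; without positivity, the moment conditions alone would not bound the total variation.
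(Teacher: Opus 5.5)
Your proof is correct and follows the same route as the paper: Banach--Alaoglu plus separability of $C(K)$ for the balls $\mathcal B_M(K)$, and linearity of the moment and positivity constraints for convexity. As in the paper, $\M_f(K)$ is a weak-$*$ closed subset of $\mathcal B_{a_0}(K)$ via the mass identity $m_0(\mu)=a_0$, and your only addition is to spell out the metrizability argument (explicit metric plus the compact-to-Hausdorff comparison) that the paper treats as standard.
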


\begin{proof}
Convexity of $\M_f(K)$ is immediate from the linearity of $\mu\mapsto m_n(\mu)$
and of the positivity constraint.  Every $\mu\in\M_f(K)$ is positive of mass
$m_0(\mu)=a_0$, hence lies in $\mathcal B_{a_0}(K)$.  The set
$\mathcal B_M(K)$ is the closed ball of $C(K)^*$, weak-$*$ compact by
Banach--Alaoglu and weak-$*$ metrizable because $C(K)$ is separable.  Positivity
and each moment condition $\mu\mapsto\int_Kz^n\,d\mu$ are weak-$*$ closed
conditions, so $\M_f(K)$ is a weak-$*$ closed subset of $\mathcal B_{a_0}(K)$.
\end{proof}

We use the following $L^q$ estimate.

\begin{lemma}[$L^q$ bound, $q<2$]\label{lem:Lq}
Let $1\leq q<2$ and let $R=\sup\{|z-\zeta|:z\in\Omega,\ \zeta\in K\}$.  Then for
  every finite complex measure $\mu$ supported in $K$,
\begin{equation}\label{eq:Lq-bound}
 \|\Cauchy_\mu\|_{L^q(\Omega)}
 \leq A_q\|\mu\|_{\TV},
 \qquad
 A_q=\Bigl(\frac{2\pi R^{2-q}}{2-q}\Bigr)^{1/q}.
\end{equation}
\end{lemma}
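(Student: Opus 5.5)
The plan is to apply Minkowski's integral inequality to the kernel representation and then compute one radial integral. For $z\in\Omega$ we have $|\Cauchy_\mu(z)|\le\int_K|z-\zeta|^{-1}\,d|\mu|(\zeta)$, wherever the right-hand side is finite; finiteness almost everywhere will come out of the estimate itself. So it suffices to bound the $L^q(\Omega)$ norm of $F(z):=\int_K|z-\zeta|^{-1}\,d|\mu|(\zeta)$.

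By Minkowski's integral inequality (equivalently, the convexity of the $L^q$ norm applied to the superposition $F=\int_K k_\zeta\,d|\mu|(\zeta)$ with $k_\zeta(z)=|z-\zeta|^{-1}$, which is legitimate by Tonelli since everything is nonnegative), we get
\[
\|F\|_{L^q(\Omega)}\le\int_K\|k_\zeta\|_{L^q(\Omega)}\,d|\mu|(\zeta)\le\Bigl(\sup_{\zeta\in K}\|k_\zeta\|_{L^q(\Omega)}\Bigr)\|\mu\|_{\TV}.
\]

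For fixed $\zeta\in K$, every $z\in\Omega$ satisfies $|z-\zeta|\le R$ by the definition of $R$. Hence $\Omega\subset\overline{D(\zeta,R)}$, and in polar coordinates centred at $\zeta$,
\[
\int_\Omega|z-\zeta|^{-q}\,dA(z)\le\int_0^{2\pi}\!\!\int_0^R\rho^{1-q}\,d\rho\,d\theta=\frac{2\pi R^{2-q}}{2-q}.
\]
This is finite exactly because $q<2$. Taking $q$-th roots gives $\|k_\zeta\|_{L^q(\Omega)}\le A_q$ uniformly in $\zeta$, and combining this with the previous display yields \eqref{eq:Lq-bound}.

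There is no real obstacle here. The only points needing care are, first, justifying the exchange of integrals; this is done either via Tonelli for $q=1$, or for $q>1$ via duality, by pairing $F$ against $g\in L^{q'}(\Omega)$ with $g\ge0$ and applying Fubini followed by Hölder in $z$. Second, one must note that finiteness of $\|F\|_{L^q}$ implies the integral defining $\Cauchy_\mu$ converges absolutely almost everywhere, so the pointwise bound $|\Cauchy_\mu|\le F$ holds almost everywhere.
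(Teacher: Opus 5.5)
Your proof is correct and follows the same route as the paper: you dominate $|\Cauchy_\mu|$ pointwise by $\int_K|z-\zeta|^{-1}\,d|\mu|(\zeta)$, use a convexity inequality to move the $L^q(\Omega)$ norm inside the $|\mu|$-integral, and finish with the polar-coordinate bound $\int_\Omega|z-\zeta|^{-q}\,dA\le 2\pi R^{2-q}/(2-q)$. The only difference is the convexity step: you use Minkowski's integral inequality, whereas the paper applies Jensen's inequality to the normalized measure $|\mu|/\|\mu\|_{\TV}$ and then Tonelli; both give the same constant $A_q$, and your version has the small convenience of not needing the separate case $\mu=0$.
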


\begin{proof}
If $\|\mu\|_{\TV}=0$, then $\mu=0$ and the assertion is immediate.  Assume
therefore that $\|\mu\|_{\TV}>0$.  The kernels $|z-\zeta|^{-1}$ and $|z-\zeta|^{-q}$ are locally
integrable in the plane because $q<2$.  Fubini's theorem shows, in
particular, that
\[
 \int_K\frac{d|\mu|(\zeta)}{|z-\zeta|}<\infty
\]
for almost every $z\in\Omega$, so the following pointwise estimates are
legitimate at almost every $z$.  The triangle inequality for integration with
respect to a complex measure gives
\[
 |\Cauchy_\mu(z)|
 \leq\int_K\frac{d|\mu|(\zeta)}{|z-\zeta|}.
\]
Apply Jensen's inequality to the probability measure
$d\nu=d|\mu|/\|\mu\|_{\TV}$ and the convex function $t\mapsto t^q$.  We obtain
\begin{align*}
 |\Cauchy_\mu(z)|^q
 &\leq\left(\int_K\frac{d|\mu|(\zeta)}{|z-\zeta|}\right)^q\\
 &=\|\mu\|_{\TV}^{\,q}
   \left(\int_K\frac{d\nu(\zeta)}{|z-\zeta|}\right)^q\\
 &\leq\|\mu\|_{\TV}^{\,q}
   \int_K\frac{d\nu(\zeta)}{|z-\zeta|^q}
 =\|\mu\|_{\TV}^{\,q-1}
   \int_K\frac{d|\mu|(\zeta)}{|z-\zeta|^q}.
\end{align*}
Integrating over $\Omega$ and using Tonelli's theorem for the nonnegative
integrand gives
\begin{align*}
 \int_\Omega|\Cauchy_\mu(z)|^q\,dA(z)
 &\leq\|\mu\|_{\TV}^{\,q-1}\int_K
 \left(\int_\Omega\frac{dA(z)}{|z-\zeta|^{q}}\right)d|\mu|(\zeta).
\end{align*}
For each $\zeta\in K$, the translated set $\Omega-\zeta$ is contained in
$D(0,R)$ by the definition of $R$.  Hence
\[
 \int_\Omega\frac{dA(z)}{|z-\zeta|^{q}}
 \leq\int_{D(0,R)}\frac{dA(u)}{|u|^{q}}
 =\frac{2\pi R^{2-q}}{2-q}.
\]
It follows that
\[
 \int_\Omega|\Cauchy_\mu|^q\,dA
 \leq\|\mu\|_{\TV}^{\,q}\frac{2\pi R^{2-q}}{2-q}.
\]
Taking the $q$th root proves \eqref{eq:Lq-bound}.
\end{proof}

\begin{lemma}\label{lem:L1-Cauchy-continuity}
Let $\mu_n,\mu$ be finite complex measures supported in $K$.  Assume that
$\sup_n\|\mu_n\|_{\TV}<\infty$ and that $\mu_n\to\mu$ weak-$*$ in $\M(K)$.
Then
\begin{equation}\label{eq:L1-Cauchy-convergence}
 \Cauchy_{\mu_n}\longrightarrow\Cauchy_\mu
 \qquad\text{in }L^1(\Omega).
\end{equation}
\end{lemma}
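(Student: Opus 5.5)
The plan is a truncation argument. We split the Cauchy kernel into a bounded continuous part and a small singular part. The singular part is controlled uniformly in $n$ by the same Tonelli computation used in Lemma~\ref{lem:Lq}.

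Set $M=\sup_n\|\mu_n\|_{\TV}$. Then $\|\mu\|_{\TV}\le M$ as well, by weak-$*$ lower semicontinuity of the norm (Lemma~\ref{lem:compact-class}). Fix $\eps>0$ and choose a continuous cutoff $\chi_\eps:[0,\infty)\to[0,1]$ with $\chi_\eps=0$ on $[0,\eps]$ and $\chi_\eps=1$ on $[2\eps,\infty)$. Write
\[
 \frac{1}{z-\zeta}=k_\eps(z,\zeta)+s_\eps(z,\zeta),\qquad
 k_\eps(z,\zeta)=\frac{\chi_\eps(|z-\zeta|)}{z-\zeta},
\]
so that $k_\eps$ is continuous and bounded by $1/\eps$ on $\bC\times\bC$, and $s_\eps$ is supported in $\{|z-\zeta|<2\eps\}$ with $|s_\eps|\le|z-\zeta|^{-1}$. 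Correspondingly $\Cauchy_\nu=\Cauchy^\eps_\nu+S^\eps_\nu$ for any $\nu\in\{\mu_n,\mu\}$, where $\Cauchy^\eps_\nu(z)=\int k_\eps(z,\zeta)\,d\nu(\zeta)$.

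For the singular part, Tonelli gives
\[
 \|S^\eps_\nu\|_{L^1(\Omega)}\le\int_K\int_{D(\zeta,2\eps)}\frac{dA(z)}{|z-\zeta|}\,d|\nu|(\zeta)=4\pi\eps\,\|\nu\|_{\TV}\le4\pi\eps M,
\]
uniformly in $n$. This is exactly the estimate of Lemma~\ref{lem:Lq} with $q=1$ and $R$ replaced by $2\eps$.

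For the regular part, for each fixed $z$ the function $\zeta\mapsto k_\eps(z,\zeta)$ lies in $C(K)$, so weak-$*$ convergence gives $\Cauchy^\eps_{\mu_n}(z)\to\Cauchy^\eps_\mu(z)$ pointwise for every $z\in\Omega$. Moreover $|\Cauchy^\eps_{\mu_n}(z)|\le M/\eps$, and $\Omega$ is bounded. Dominated convergence therefore yields $\Cauchy^\eps_{\mu_n}\to\Cauchy^\eps_\mu$ in $L^1(\Omega)$. Alternatively, uniform continuity of $k_\eps$ on $\overline\Omega\times K$ together with an equicontinuity/compactness argument even gives uniform convergence, but dominated convergence suffices.

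Combining the two parts,
\[
 \limsup_{n}\|\Cauchy_{\mu_n}-\Cauchy_\mu\|_{L^1(\Omega)}\le 8\pi\eps M,
\]
and letting $\eps\to0$ proves \eqref{eq:L1-Cauchy-convergence}. The main point requiring care is the uniformity of the singular estimate, which depends on the uniform mass bound; this is why the hypothesis $\sup_n\|\mu_n\|_{\TV}<\infty$ is needed. Note that for complex measures it does not follow automatically from weak-$*$ convergence unless one invokes uniform boundedness, which would also give it. The remaining steps are routine.
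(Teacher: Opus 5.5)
Your proposal is correct and follows essentially the same route as the paper: truncate the kernel with a continuous cutoff, obtain $L^1(\Omega)$ convergence of the bounded part from pointwise weak-$*$ convergence plus dominated convergence, and bound the singular remainder by $4\pi\eps M$ via Tonelli. The only cosmetic difference is that you estimate the remainders for $\mu_n$ and $\mu$ separately (giving $8\pi\eps M$), whereas the paper estimates the remainder once against $|\mu_n-\mu|$.
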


\begin{proof}
Fix $\eps>0$ and choose a continuous cutoff $\chi_\eps:[0,\infty)\to[0,1]$ with
$\chi_\eps(t)=0$ for $t\leq\eps$ and $\chi_\eps(t)=1$ for $t\geq 2\eps$; put
\[
 k_\eps(z,\zeta)=\frac{\chi_\eps(|z-\zeta|)}{z-\zeta},
 \qquad |k_\eps|\leq\tfrac1\eps .
\]
The kernel $k_\eps$ is bounded and continuous on $\overline\Omega\times K$.
Hence, for each fixed $z\in\Omega$, weak-$*$ convergence gives
$\int_Kk_\eps(z,\zeta)\,d\mu_n(\zeta)\to\int_Kk_\eps(z,\zeta)\,d\mu(\zeta)$, and
these quantities are bounded by $M/\eps$ with
$M=\sup_n\|\mu_n\|_{\TV}+\|\mu\|_{\TV}$.  Since $A(\Omega)<\infty$, dominated
convergence gives convergence in $L^1(\Omega)$ of the truncated transforms.  For
the singular remainder, Fubini's theorem and
$\int_{D(\zeta,2\eps)}|z-\zeta|^{-1}dA(z)=4\pi\eps$ give
\[
 \int_\Omega\int_K
 \frac{1-\chi_\eps(|z-\zeta|)}{|z-\zeta|}\,
 d|\mu_n-\mu|(\zeta)\,dA(z)
 \leq
 M\sup_{\zeta\in K}\int_{D(\zeta,2\eps)}\frac{dA(z)}{|z-\zeta|}
 =4\pi M\eps .
\]
Therefore
$\limsup_n\|\Cauchy_{\mu_n}-\Cauchy_\mu\|_{L^1(\Omega)}\leq 4\pi M\eps$ for
every $\eps>0$.
\end{proof}

\section{The algebraic-defect functional}\label{sec:defect}

\begin{definition}[Algebraic defect]
\label{def:algebraic-defect}
For a positive measure $\mu$ supported in $K$ define
\begin{equation}\label{eq:defect-functional}
 \mathfrak D_{P,K}(\mu)
 :=\int_K
 \left|P\bigl(z,\Cauchy_\mu(z)\bigr)\right|^{1/d}\,dA(z),
 \qquad d=\deg_wP,
\end{equation}
where $\Cauchy_\mu$ is any almost-everywhere representative of its
$L^1_{\mathrm{loc}}$ class.
\end{definition}

\begin{remark}\label{rem:exponent}
For $0<p<2/d$, the functional
$\int_K|P(z,\Cauchy_\mu(z))|^p\,dA$ is finite and weak-$*$ continuous by
Lemma~\ref{lem:Lq}; all these functionals have the same zero set.  The choice
$p=1/d$ is used in Section~\ref{sec:quantitative}, since away from
$\Pi_P=p_d^{-1}(0)$ it yields a lower bound of the form
$|P(z,w)|^{1/d}\ge c(|w|-R)^+$.  The integrability range is sharp: for
$\mu=m\delta_a$ with $a\notin\Pi_P$, the local singularity is of order
$|z-a|^{-pd}$.
\end{remark}

\begin{remark}[Independence of the ambient integration domain]
\label{rem:ambient}
For every $\Omega \supset K$ and every $\mu\in\M_f(K)$,
\begin{equation}\label{eq:ambient}
 \mathfrak D_{P,K}(\mu)
 =\int_\Omega
 \left|P\bigl(z,\Cauchy_\mu(z)\bigr)\right|^{1/d}\,dA(z).
\end{equation}
Indeed $z\mapsto P(z,\Cauchy_\mu(z))$ is holomorphic on the connected open set
$\bC\setminus K$ and vanishes near infinity, hence vanishes identically there.
In particular, it vanishes on $\Omega\setminus K$.
\end{remark}

\subsection{Continuity and existence of minimizers}

\begin{proposition}\label{prop:defect-continuity}
For every $M>0$ the functional $\mathfrak D_{P,K}$ is finite and weak-$*$
continuous on
$\mathcal P_M(K)=\{\mu\geq0:\supp\mu\subset K,\ \mu(K)\leq M\}$.  In
particular it is finite and weak-$*$ continuous on $\M_f(K)$.
\end{proposition}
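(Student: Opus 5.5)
The plan is to use the elementary inequality $|a|^{1/d}$ is subadditive-like: for $0<s\le1$, $\bigl||x|^s-|y|^s\bigr|\le|x-y|^s$. Applied with $s=1/d$, this reduces continuity of $\mathfrak D_{P,K}$ to controlling $\int_K|P(z,\Cauchy_{\mu_n})-P(z,\Cauchy_\mu)|^{1/d}\,dA$. By Lemma~\ref{lem:compact-class}, $\mathcal P_M(K)$ is a weak-$*$ closed subset of $\mathcal B_M(K)$ and hence metrizable, so it suffices to treat sequences $\mu_n\to\mu$ in $\mathcal P_M(K)$.

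\emph{Finiteness.} On the compact set $K$ the coefficients $p_j$ are bounded, say by $B$. Hence
\[
 |P(z,w)|^{1/d}\le \Bigl(B\sum_{j=0}^d|w|^j\Bigr)^{1/d}\le C(1+|w|),
\]
so $\mathfrak D_{P,K}(\mu)\le C A(K)+C\|\Cauchy_\mu\|_{L^1(K)}\le C A(K)+CA_1M$ by Lemma~\ref{lem:Lq} with $q=1$. This step is immediate.

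\emph{Continuity.} Write $u_n=\Cauchy_{\mu_n}$ and $u=\Cauchy_\mu$. Lemma~\ref{lem:L1-Cauchy-continuity} gives $u_n\to u$ in $L^1(\Omega)$. I would upgrade this to convergence in $L^q(K)$ for any fixed $q\in(1,2)$ by interpolation: the sequence is bounded in $L^{q'}$ for some $q<q'<2$ by Lemma~\ref{lem:Lq}, so $\|u_n-u\|_{L^q}\le\|u_n-u\|_{L^1}^{\theta}\|u_n-u\|_{L^{q'}}^{1-\theta}\to0$. Alternatively, $u_n\to u$ in measure together with uniform integrability of $|u_n|^q$ suffices (Vitali).

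Next, factor
\[
 P(z,u_n)-P(z,u)=(u_n-u)\,Q(z,u_n,u),\qquad |Q|\le C\bigl(1+|u_n|+|u|\bigr)^{d-1}.
\]
This gives
\[
 |P(z,u_n)-P(z,u)|^{1/d}\le C|u_n-u|^{1/d}\bigl(1+|u_n|+|u|\bigr)^{(d-1)/d}.
\]
Apply Hölder with exponents $d$ and $d/(d-1)$:
\[
 \int_K(\cdots)\le C\|u_n-u\|_{L^1(K)}^{1/d}\,\bigl\|1+|u_n|+|u|\bigr\|_{L^1(K)}^{(d-1)/d}.
\]
The second factor is bounded by Lemma~\ref{lem:Lq}, and the first tends to $0$. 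So the plain $L^1$ convergence already suffices, and the $L^q$ upgrade is unnecessary. Combined with $\bigl||x|^{1/d}-|y|^{1/d}\bigr|\le|x-y|^{1/d}$, this yields $\mathfrak D_{P,K}(\mu_n)\to\mathfrak D_{P,K}(\mu)$.

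The last assertion follows because $\M_f(K)\subset\mathcal P_{a_0}(K)$. The only step requiring care is the growth bound on $Q$, which uses boundedness of the $p_j$ on $K$. I expect no real obstacle; the main point is choosing Hölder exponents so that only $L^1$ control of Cauchy transforms is needed.
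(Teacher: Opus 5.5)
Your proof is correct, but it takes a different route from the paper's.

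\textbf{The paper's route.} It passes to an a.e.\ convergent subsequence of $\Cauchy_{\mu_n}$, using the $L^1(\Omega)$ convergence of Lemma~\ref{lem:L1-Cauchy-continuity}. The integrands $g_n=|P(z,\Cauchy_{\mu_n})|^{1/d}$ then converge a.e.\ and are bounded in $L^q(\Omega)$ for some $q\in(1,2)$, by \eqref{eq:polynomial-growth} and Lemma~\ref{lem:Lq}. Vitali's theorem and a subsequence-of-subsequence argument finish the proof.

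\textbf{Your route.} You combine three ingredients: the inequality $\bigl||x|^{1/d}-|y|^{1/d}\bigr|\le|x-y|^{1/d}$; the factorization $P(z,w)-P(z,v)=(w-v)Q(z,w,v)$ with $|Q|\le C(1+|w|+|v|)^{d-1}$ on $K$; and H\"older with exponents $d$ and $d/(d-1)$. This gives the explicit estimate
\[
 \bigl|\mathfrak D_{P,K}(\mu)-\mathfrak D_{P,K}(\nu)\bigr|
 \le C\,\|\Cauchy_\mu-\Cauchy_\nu\|_{L^1(K)}^{1/d}
 \bigl(A(K)+A_1(\mu(K)+\nu(K))\bigr)^{(d-1)/d}
\]
for positive $\mu,\nu$ supported in $K$; the case $d=1$ is trivial since $Q=p_1$ is bounded.

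\textbf{Comparison.} Your argument gives more than the paper's. It yields a H\"older-$1/d$ modulus of continuity with respect to the $L^1$ distance of Cauchy transforms. It needs no subsequence extraction, and it uses only the $q=1$ case of Lemma~\ref{lem:Lq}. It is also a natural companion to the estimate used in the proof of Theorem~\ref{thm:stability}.

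The paper's soft argument, in turn, uses nothing about $P$ beyond continuity in $w$ and the growth bound. It carries over essentially unchanged to every exponent $p<2/d$ in Remark~\ref{rem:exponent}. Your H\"older split gets by with $L^1$ control only because $p\le 1/d$; for $1/d<p<2/d$ you would again need the $L^q$ bounds with $q>1$.
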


\begin{proof}
Since the coefficients $p_j$ are bounded on $\Omega$, there is $C=C(P,\Omega)$
with
\begin{equation}\label{eq:polynomial-growth}
 |P(z,w)|^{1/d}\leq C(1+|w|),
 \qquad z\in\Omega,\ w\in\bC .
\end{equation}
Finiteness follows from \eqref{eq:polynomial-growth} and
Lemma~\ref{lem:Lq} with $q=1$.

By Lemma~\ref{lem:compact-class}, $\mathcal P_M(K)$ is weak-$*$ compact and
metrizable, so it suffices to prove sequential continuity.  Let
$\mu_n\to\mu$ weak-$*$ in $\mathcal P_M(K)$ and write $C_n=\Cauchy_{\mu_n}$,
$C_\infty=\Cauchy_\mu$.  By Lemma~\ref{lem:L1-Cauchy-continuity},
$C_n\to C_\infty$ in $L^1(\Omega)$; passing to a subsequence we may assume in
addition that $C_n\to C_\infty$ almost everywhere on $\Omega$.  Put
\[
 g_n(z)=\bigl|P(z,C_n(z))\bigr|^{1/d},
 \qquad
 g_\infty(z)=\bigl|P(z,C_\infty(z))\bigr|^{1/d} .
\]
Then $g_n\to g_\infty$ almost everywhere, because $w\mapsto|P(z,w)|^{1/d}$ is
continuous.  Fix $q\in(1,2)$.  By \eqref{eq:polynomial-growth} and
Lemma~\ref{lem:Lq},
\[
 \int_\Omega g_n^{\,q}\,dA
 \leq C^q\int_\Omega(1+|C_n|)^{q}\,dA
 \leq C^q2^{q}\bigl(A(\Omega)+A_q^{\,q}M^{q}\bigr),
\]
uniformly in $n$.  A family bounded in $L^q(\Omega)$ with $q>1$ on a set of
finite measure is uniformly integrable, so Vitali's convergence theorem gives
$g_n\to g_\infty$ in $L^1(\Omega)$ and a fortiori
$\int_Kg_n\,dA\to\int_Kg_\infty\,dA$.  Since every subsequence of
$(\mathfrak D_{P,K}(\mu_n))_n$ has a further subsequence converging to
$\mathfrak D_{P,K}(\mu)$, the whole sequence converges.
\end{proof}

\begin{theorem}[Existence and the zero-defect criterion]
\label{thm:main-defect}
Assume $\M_f(K)\neq\varnothing$.  Then $\mathfrak D_{P,K}$ attains its minimum
on $\M_f(K)$, and the following are equivalent:
\begin{enumerate}[label=\textup{(\roman*)}]
\item $\min_{\mu\in\M_f(K)}\mathfrak D_{P,K}(\mu)=0$;
\item there exists $\mu\in\M_f(K)$ such that
\begin{equation}\label{eq:algebraic-ae}
 P\bigl(z,\Cauchy_\mu(z)\bigr)=0
 \qquad\text{for almost every }z\in\bC .
\end{equation}
\end{enumerate}
If these conditions hold, every minimizer satisfies \eqref{eq:algebraic-ae}.
\end{theorem}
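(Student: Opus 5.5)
The plan is to combine the compactness of Lemma~\ref{lem:compact-class} with the continuity of Proposition~\ref{prop:defect-continuity}. Attainment is then the Weierstrass principle. By Lemma~\ref{lem:compact-class}, $\M_f(K)$ is weak-$*$ compact, and it is nonempty by assumption. By Proposition~\ref{prop:defect-continuity}, $\mathfrak D_{P,K}$ is finite and weak-$*$ continuous on $\M_f(K)\subset\mathcal P_{a_0}(K)$. Since the class is metrizable, I will take a minimizing sequence, extract a weak-$*$ convergent subsequence, and pass to the limit in the defect. The limit lies in $\M_f(K)$ because that set is closed, so it is a minimizer. In particular $\delta_P(f;K)=\min\mathfrak D_{P,K}$ is a genuine minimum.

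For the equivalence, the core observation is that the integrand $|P(z,\Cauchy_\mu(z))|^{1/d}$ is nonnegative and measurable. Hence $\mathfrak D_{P,K}(\mu)=0$ holds if and only if $P(z,\Cauchy_\mu(z))=0$ for almost every $z\in K$.

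For (ii)$\Rightarrow$(i), a measure as in \eqref{eq:algebraic-ae} has zero defect. Since the defect is nonnegative, the minimum is $0$.

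For (i)$\Rightarrow$(ii), let $\mu$ be a minimizer with $\mathfrak D_{P,K}(\mu)=0$. Then $P(z,\Cauchy_\mu(z))=0$ a.e.\ on $K$. On $\bC\setminus K$, Remark~\ref{rem:ambient} gives the identity everywhere: $z\mapsto P(z,\Cauchy_\mu(z))$ is holomorphic on the connected set $\bC\setminus K$ and vanishes near infinity, because $\Cauchy_\mu$ has germ $f$ there. Combining the two gives \eqref{eq:algebraic-ae} on all of $\bC$.

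The same argument proves the last sentence. If the minimum is zero, then every minimizer $\mu$ has $\mathfrak D_{P,K}(\mu)=0$, and the preceding reasoning applies to it verbatim.

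The only point needing care is the measurability and representative-independence of the integrand. Changing $\Cauchy_\mu$ on a null set does not affect the integral or the a.e.\ statement, and $\Cauchy_\mu$ is a measurable $L^1_{\mathrm{loc}}$ function composed with the continuous map $(z,w)\mapsto|P(z,w)|^{1/d}$. So I do not expect a real obstacle: the substantive analysis, namely the uniform integrability via Lemma~\ref{lem:Lq} and the $L^1$ convergence in Lemma~\ref{lem:L1-Cauchy-continuity}, is already contained in Proposition~\ref{prop:defect-continuity}.
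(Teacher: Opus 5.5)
Your proposal is correct and follows essentially the same route as the paper. Attainment comes from the compactness in Lemma~\ref{lem:compact-class} together with the continuity in Proposition~\ref{prop:defect-continuity}; the zero-defect criterion follows from the nonnegative integrand vanishing a.e.\ on $K$, combined with Remark~\ref{rem:ambient} on $\bC\setminus K$.
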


\begin{proof}
Existence follows from Lemma~\ref{lem:compact-class} and
Proposition~\ref{prop:defect-continuity}.  If the minimum is $0$ and $\mu_*$ is
a minimizer, then $P(z,\Cauchy_{\mu_*}(z))=0$ for almost every $z\in K$, while
on $\bC\setminus K$ the holomorphic function
$P(z,\Cauchy_{\mu_*}(z))$ vanishes identically by
Remark~\ref{rem:ambient}.  This proves
\eqref{eq:algebraic-ae}.  Conversely \eqref{eq:algebraic-ae} forces
$\mathfrak D_{P,K}(\mu)=0$, so the minimum vanishes.
\end{proof}

\begin{definition}[Minimum algebraic defect]
\label{def:delta}
Set
\begin{equation}\label{eq:delta}
 \delta_P(f;K)
 :=\min_{\mu\in\M_f(K)}\mathfrak D_{P,K}(\mu).
\end{equation}
\end{definition}

We write
\begin{equation}\label{eq:zero-set}
 \mathcal Z_{P,f}(K)
 :=\{\mu\in\M_f(K):\mathfrak D_{P,K}(\mu)=0\}
\end{equation}
for the compact set of \emph{zero-defect algebraic representatives}; no
condition on their supports is included in this notation.

\begin{remark}[Normalization and monotonicity]\label{rem:monotone}
Multiplying $P$ by a nonzero constant $\lambda$ multiplies $\delta_P(f;K)$ by
$|\lambda|^{1/d}$ and changes neither its vanishing nor the set of minimizers;
thus the statement $\delta_P(f;K)=0$ is intrinsic to the irreducible algebraic
curve and the localization set.  Moreover, if $K\subset K'$ are compact convex
sets then $\M_f(K)\subset\M_f(K')$, and by Remark~\ref{rem:ambient} the two
functionals agree on the smaller class; hence
\[
 K\subset K'\quad\Longrightarrow\quad \delta_P(f;K')\leq\delta_P(f;K).
\]
\end{remark}

\section{Concentration estimates}\label{sec:quantitative}

\begin{lemma}[Reverse polynomial bound]\label{lem:reverse}
Let $L\subset\bC\setminus\Pi_P$ be compact and denote 
\[
 \alpha_L=\min_{z\in L}|p_d(z)|>0,\qquad
 \beta_L=\max_{0\leq j\leq d-1}\ \max_{z\in L}|p_j(z)| ,
\]
\[
 R_L=\max\Bigl\{1,\ \frac{2d\beta_L}{\alpha_L}\Bigr\},
 \qquad
 c_L=\Bigl(\frac{\alpha_L}{2}\Bigr)^{1/d}.
\]
Then for all $z\in L$ and all $w\in\bC$,
\begin{equation}\label{eq:reverse}
 |P(z,w)|^{1/d}\ \geq\ c_L\,(|w|-R_L)^{+} ,
\end{equation}
and every root $w$ of $P(z,\cdot)$ with $z\in L$ satisfies $|w|\leq R_L$.
\end{lemma}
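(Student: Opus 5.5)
The plan is to compare $|P(z,w)|$ with its leading term $|p_d(z)||w|^d$ by the triangle inequality, splitting according to whether $|w|$ is large or small relative to $R_L$. Fix $z\in L$, so that $|p_d(z)|\ge\alpha_L$ and $|p_j(z)|\le\beta_L$ for $j<d$.

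\emph{Case $|w|\le R_L$.} The right-hand side of \eqref{eq:reverse} is $0$ and the inequality is trivial.

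\emph{Case $|w|>R_L$.} Since $R_L\ge 1$ we have $|w|>1$, so $|w|^j\le|w|^{d-1}$ for $0\le j\le d-1$. This gives
\[
 \Bigl|\sum_{j=0}^{d-1}p_j(z)w^j\Bigr|\le d\beta_L|w|^{d-1}.
\]
Moreover $|w|>R_L\ge 2d\beta_L/\alpha_L$ yields $d\beta_L|w|^{d-1}<\tfrac{\alpha_L}{2}|w|^d$. Combining the two,
\[
 |P(z,w)|\ge \alpha_L|w|^d-\tfrac{\alpha_L}{2}|w|^d=\tfrac{\alpha_L}{2}|w|^d>0 .
\]
Taking $d$-th roots gives $|P(z,w)|^{1/d}\ge c_L|w|\ge c_L(|w|-R_L)^+$, since $|w|\ge|w|-R_L>0$.

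The root bound follows from the same chain. If $|w|>R_L$, the display above shows $P(z,w)\ne0$, so every root satisfies $|w|\le R_L$. The case $d\ge1$ with $d-1=0$ is covered, since then the sum has a single term and $|w|^0=1\le|w|^{0}$.

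There is no real obstacle. The only points needing care are the role of $R_L\ge1$, which is what makes the comparison $|w|^j\le|w|^{d-1}$ valid, and the positivity $\alpha_L>0$. The latter follows from compactness of $L$ and continuity of $p_d$, which has no zeros on $L\subset\bC\setminus\Pi_P$.
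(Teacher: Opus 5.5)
Your proof is correct and is essentially the paper's argument: you bound the lower-order terms by $d\beta_L|w|^{d-1}$ using $|w|\ge 1$, absorb them into half the leading term using $|w|\ge 2d\beta_L/\alpha_L$, and read off both \eqref{eq:reverse} and the root bound from $|P(z,w)|\ge\tfrac{\alpha_L}{2}|w|^d>0$. The only difference is cosmetic: you treat $|w|>R_L$ strictly, while the paper works with $|w|\ge R_L$.
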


\begin{proof}
Let $z\in L$ and $|w|\geq R_L\geq1$.  Then
\begin{align*}
 |P(z,w)|
 &\geq\alpha_L|w|^d-\beta_L\sum_{j=0}^{d-1}|w|^{j}
 \geq\alpha_L|w|^d-d\beta_L|w|^{d-1}\\
 &=|w|^{d-1}\bigl(\alpha_L|w|-d\beta_L\bigr)
 \geq\frac{\alpha_L}{2}|w|^{d},
\end{align*}
since $|w|\geq 2d\beta_L/\alpha_L$.  Hence $|P(z,w)|^{1/d}\geq c_L|w|\geq
c_L(|w|-R_L)$, and for $|w|<R_L$ the right-hand side of \eqref{eq:reverse}
vanishes.  The same computation shows $P(z,w)\neq0$ when $|w|\geq R_L$.
\end{proof}

\begin{lemma}[Mass from the Cauchy transform]\label{lem:mass}
Let $\mu\geq0$ be a finite measure on $\bC$ and $D(a,2r)\subset\bC$.  Then
\begin{equation}\label{eq:mass-from-C}
 \mu\bigl(D(a,r)\bigr)
 \leq\frac{1}{\pi r}\int_{D(a,2r)}|\Cauchy_\mu|\,dA .
\end{equation}
\end{lemma}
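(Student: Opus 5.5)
We want $\mu(D(a,r))\le \frac{1}{\pi r}\int_{D(a,2r)}|\Cauchy_\mu|\,dA$ for a positive finite measure $\mu$. The plan is to test the distributional identity \eqref{eq:dbar-Cauchy}, $\partial_{\bar z}\Cauchy_\mu=\pi\mu$, against a Lipschitz cutoff adapted to the two discs. We take the radial function
\[
 \varphi(z)=\min\Bigl\{1,\ \Bigl(2-\frac{|z-a|}{r}\Bigr)^{+}\Bigr\}.
\]
It equals $1$ on $\overline{D(a,r)}$, vanishes outside $D(a,2r)$, satisfies $0\le\varphi\le1$, and is Lipschitz with $|\nabla\varphi|\le 1/r$ almost everywhere, supported in the annulus $r\le|z-a|\le2r$. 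Since $\partial_{\bar z}=\tfrac12(\partial_x+i\partial_y)$, we get $|\partial_{\bar z}\varphi|=\tfrac12|\nabla\varphi|\le \frac{1}{2r}$.

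The main step is the pairing. By \eqref{eq:dbar-Cauchy}, in the sense of distributions,
\[
 \pi\int\varphi\,d\mu=-\int_{\bC}\Cauchy_\mu\,\partial_{\bar z}\varphi\,dA .
\]
Because $\varphi$ is only Lipschitz rather than smooth, this needs a brief justification. One way is to mollify $\varphi$ to $\varphi_\eps=\varphi*\rho_\eps$, with $\varphi_\eps\to\varphi$ uniformly and $\partial_{\bar z}\varphi_\eps\to\partial_{\bar z}\varphi$ boundedly almost everywhere, supported in a fixed compact set. One then passes to the limit using $\Cauchy_\mu\in L^1_{\mathrm{loc}}$ and dominated convergence. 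Alternatively, one can verify the identity directly by Fubini, using the fact that $\int \partial_{\bar z}\varphi(z)\,(z-\zeta)^{-1}\,dA(z)=-\pi\varphi(\zeta)$ for compactly supported Lipschitz $\varphi$ (the Cauchy--Pompeiu formula). Fubini applies because $\int\!\!\int |z-\zeta|^{-1}|\partial_{\bar z}\varphi|\,dA\,d\mu<\infty$.

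Given the pairing, positivity of $\mu$ and $\varphi\ge\mathbf 1_{D(a,r)}$ give
\[
 \pi\mu(D(a,r))\le\pi\int\varphi\,d\mu\le\frac{1}{2r}\int_{D(a,2r)\setminus D(a,r)}|\Cauchy_\mu|\,dA\le \frac1{r}\int_{D(a,2r)}|\Cauchy_\mu|\,dA .
\]
Dividing by $\pi$ yields \eqref{eq:mass-from-C}; in fact it holds with the better constant $1/(2\pi r)$.

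The only delicate point is justifying the integration by parts for a non-smooth test function against the merely $L^1_{\mathrm{loc}}$ function $\Cauchy_\mu$. We expect the mollification argument above to handle it routinely.
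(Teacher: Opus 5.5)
Your proof is correct and follows the paper's argument: you pair the distributional identity $\partial_{\bar z}\Cauchy_\mu=\pi\mu$ with a cutoff that equals $1$ on $D(a,r)$ and vanishes off $D(a,2r)$, then bound $|\partial_{\bar z}\varphi|$. The only difference is that the paper takes a smooth cutoff with the cruder bound $|\nabla\varphi|\le 2/r$, so no approximation step is needed. Your Lipschitz cutoff of slope $1/r$ needs the routine mollification you describe, and in return it yields the sharper constant $1/(2\pi r)$.
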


\begin{proof}
Choose $\varphi\in C_c^\infty(D(a,2r))$ with $0\leq\varphi\leq1$, $\varphi=1$ on
$D(a,r)$ and $|\nabla\varphi|\leq2/r$, so that
$|\partial_{\bar z}\varphi|=\frac12|\nabla\varphi|\leq1/r$.  By
\eqref{eq:dbar-Cauchy},
\[
 \pi\mu\bigl(D(a,r)\bigr)\leq\pi\int\varphi\,d\mu
 =\Bigl|\int\Cauchy_\mu\,\partial_{\bar z}\varphi\,dA\Bigr|
 \leq\frac1r\int_{D(a,2r)}|\Cauchy_\mu|\,dA . \qedhere
\]
\end{proof}

\begin{theorem}[Concentration bound]\label{thm:concentration}
Let $L\subset\Omega\setminus\Pi_P$ be compact; by Remark~\ref{rem:ambient} the
neighborhood $\Omega$ may be enlarged without changing $\mathfrak D_{P,K}$, so
this covers every compact $L\subset\bC\setminus\Pi_P$.  Let
\[
 r_0=\tfrac14\dist\bigl(L,\ \partial\Omega\cup\Pi_P\bigr)>0,
 \qquad
 L'=\{z:\dist(z,L)\leq2r_0\}\subset\Omega\setminus\Pi_P .
\]
Write $R=R_{L'}$ and $c=c_{L'}$ for the constants of Lemma~\ref{lem:reverse}.
Then for every $\mu\in\M_f(K)$, every $a\in L$ and every $0<r\leq r_0$,
\begin{equation}\label{eq:concentration}
 \mu\bigl(D(a,r)\bigr)
 \ \leq\ 4R\,r+\frac{1}{\pi c}\cdot\frac{\mathfrak D_{P,K}(\mu)}{r}.
\end{equation}
\end{theorem}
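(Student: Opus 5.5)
The plan is to combine Lemma~\ref{lem:mass} with the reverse polynomial bound of Lemma~\ref{lem:reverse} on the doubled disk. Fix $\mu\in\M_f(K)$, $a\in L$ and $0<r\le r_0$. Since $2r\le 2r_0$, the closed disk $\overline{D(a,2r)}$ lies in $L'$. By the choice $r_0=\frac14\dist(L,\partial\Omega\cup\Pi_P)$, the set $L'$ is compact and contained in $\Omega\setminus\Pi_P$; this is the only place the factor $\frac14$ is needed. Lemma~\ref{lem:mass} gives
\[
 \mu\bigl(D(a,r)\bigr)\le\frac{1}{\pi r}\int_{D(a,2r)}|\Cauchy_\mu|\,dA .
\]

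Next, split $|\Cauchy_\mu|$ pointwise as $|\Cauchy_\mu|\le R+(|\Cauchy_\mu|-R)^+$. The constant part contributes
\[
 \frac{1}{\pi r}\cdot R\cdot A\bigl(D(a,2r)\bigr)=\frac{1}{\pi r}\cdot R\cdot 4\pi r^2=4Rr,
\]
which is the first term of \eqref{eq:concentration}. For the excess part, apply \eqref{eq:reverse} on $L'$ at almost every $z\in D(a,2r)$ with $w=\Cauchy_\mu(z)$. This is legitimate because $\Cauchy_\mu$ is finite almost everywhere, by Fubini as in Lemma~\ref{lem:Lq}. It yields $(|\Cauchy_\mu(z)|-R)^+\le c^{-1}|P(z,\Cauchy_\mu(z))|^{1/d}$. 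Integrating over $D(a,2r)\subset\Omega$ and using Remark~\ref{rem:ambient} gives
\[
 \int_{D(a,2r)}(|\Cauchy_\mu|-R)^+\,dA\le\frac1c\int_\Omega|P(z,\Cauchy_\mu)|^{1/d}\,dA=\frac{\mathfrak D_{P,K}(\mu)}{c}.
\]
Remark~\ref{rem:ambient} matters here because the disk $D(a,2r)$ may stick out of $K$, and the integrand vanishes on $\Omega\setminus K$. Dividing by $\pi r$ produces the second term $\frac{1}{\pi c}\cdot\frac{\mathfrak D_{P,K}(\mu)}{r}$.

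There is no real obstacle. The only points needing care are, first, the geometric check that $D(a,2r)\subset L'\subset\Omega\setminus\Pi_P$, so that both Lemma~\ref{lem:reverse} and Lemma~\ref{lem:mass} apply. Second, we need $\mathfrak D_{P,K}(\mu)$ to equal the integral over $\Omega$ rather than only over $K$, which is exactly what Remark~\ref{rem:ambient} provides for $\mu\in\M_f(K)$.
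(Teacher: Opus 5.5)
Your proposal is correct and follows the paper's proof essentially step by step. You check $D(a,2r)\subset L'\subset\Omega\setminus\Pi_P$, then use the pointwise bound $|\Cauchy_\mu|\le R+c^{-1}|P(z,\Cauchy_\mu)|^{1/d}$ from Lemma~\ref{lem:reverse}, integrate over $D(a,2r)$ with Remark~\ref{rem:ambient} to pass from $K$ to $\Omega$, and conclude with Lemma~\ref{lem:mass}.
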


\begin{proof}
Fix $a\in L$ and $0<r\leq r_0$, so that $D(a,2r)\subset L'\subset\Omega$.  By
Lemma~\ref{lem:reverse}, for almost every $z\in D(a,2r)$,
\[
 |\Cauchy_\mu(z)|
 \leq R+\bigl(|\Cauchy_\mu(z)|-R\bigr)^{+}
 \leq R+\frac1c\bigl|P(z,\Cauchy_\mu(z))\bigr|^{1/d} .
\]
Integrating over $D(a,2r)$ and using Remark~\ref{rem:ambient},
\[
 \int_{D(a,2r)}|\Cauchy_\mu|\,dA
 \leq 4\pi R\,r^2+\frac1c\int_\Omega\bigl|P(z,\Cauchy_\mu(z))\bigr|^{1/d}dA
 =4\pi R\,r^2+\frac{\mathfrak D_{P,K}(\mu)}{c}.
\]
Now apply Lemma~\ref{lem:mass}.
\end{proof}

\begin{corollary}[Linear growth at zero defect]\label{cor:linear-growth}
Let $L$, $r_0$, $R$ be as in Theorem~\ref{thm:concentration} and suppose
$\mathfrak D_{P,K}(\mu)=0$ for some $\mu\in\M_f(K)$.  Then
\begin{equation}\label{eq:linear-growth}
 \mu\bigl(D(a,r)\bigr)\leq 4R\,r
 \qquad\text{for all }a\in L,\ 0<r\leq r_0 .
\end{equation}
Consequently $\mu$ has no atom in $L$, and $\mu(E)=0$ for every Borel set
$E\subset L$ with $\Hone(E)=0$.
\end{corollary}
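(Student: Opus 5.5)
The plan is to specialize Theorem~\ref{thm:concentration} to $\mathfrak D_{P,K}(\mu)=0$ and then derive the two measure-theoretic consequences by covering arguments.

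\emph{Linear growth.} Inserting $\mathfrak D_{P,K}(\mu)=0$ into \eqref{eq:concentration} kills the second term and gives \eqref{eq:linear-growth} directly, for every $a\in L$ and $0<r\le r_0$. No further work is needed here, since the constants $R=R_{L'}$ and $r_0$ come from the same theorem.

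\emph{No atoms.} Fix $a\in L$. Then $\mu(\{a\})\le\mu(D(a,r))\le 4Rr$ for every $0<r\le r_0$. Letting $r\to0$ gives $\mu(\{a\})=0$.

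\emph{Null sets for $\Hone$.} Let $E\subset L$ be Borel with $\Hone(E)=0$, and fix $\eps>0$ with $\eps<r_0$. By the definition of Hausdorff measure, $E$ admits a countable cover by sets $U_i$ of diameter $d_i<\eps$ with $\sum_i d_i<\eps$. Discard the $U_i$ that miss $E$, and for each remaining one pick $a_i\in U_i\cap E\subset L$. Then $U_i\subset D(a_i,r_i)$ with $r_i:=2d_i$ when $d_i>0$; if $d_i=0$, then $U_i=\{a_i\}$ is $\mu$-null by the previous step. If some $2d_i$ exceeds $r_0$, shrink $\eps$ so that $2\eps\le r_0$, which ensures every $r_i\le r_0$. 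Applying \eqref{eq:linear-growth} to each ball gives
\[
 \mu(E)\le\sum_i\mu(D(a_i,r_i))\le 8R\sum_i d_i<8R\eps .
\]
Since $\eps$ is arbitrary, $\mu(E)=0$.

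The only point needing care is that the ball centers must lie in $L$, where \eqref{eq:linear-growth} is available. This is why I choose $a_i\in E$ rather than arbitrary centers, which costs the harmless factor $2$ in the radius.
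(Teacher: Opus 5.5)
Your proof is correct and follows the paper's argument essentially step for step: you specialize the concentration bound to zero defect, let $r\to0$ to exclude atoms, and cover an $\Hone$-null $E$ by small sets with chosen points $a_i\in E\subset L$, which gives $\mu(E)\le 8R\sum_i d_i$. Two small remarks: your separate treatment of zero-diameter covering sets improves on the paper, which tacitly assumes $t_i>0$, and the ``shrink $\eps$'' step reads more cleanly if you simply choose $\eps\le r_0/2$ at the outset.
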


\begin{proof}
Inequality \eqref{eq:linear-growth} is \eqref{eq:concentration} with
$\mathfrak D_{P,K}(\mu)=0$.  An atom of mass $m>0$ at $a\in L$ would give
$m\leq\mu(D(a,r))\leq4Rr$ for all small $r$, hence $m=0$.  For the last
assertion, let $E\subset L$ with $\Hone(E)=0$ and let $\eta>0$.  Cover $E$ by
sets $S_i$ of diameter $t_i<r_0/2$ with $\sum_it_i<\eta$ and $S_i\cap E\neq
\varnothing$; picking $a_i\in S_i\cap E\subset L$ we get $S_i\subset
D(a_i,2t_i)$ with $2t_i<r_0$, and hence
$\mu(E)\leq\sum_i\mu(D(a_i,2t_i))\leq8R\sum_it_i<8R\eta$.  Let $\eta\downarrow0$.
\end{proof}

\begin{corollary}[Small defect forbids concentration]\label{cor:small-defect}
With $L$, $r_0$, $R$, $c$ as in Theorem~\ref{thm:concentration}, set
$\eta=\mathfrak D_{P,K}(\mu)$.  If $0<\eta\leq4\pi Rc\,r_0^2$, then
\[
 \sup_{a\in L}\ \mu\Bigl(D\bigl(a,\sqrt{\eta/(4\pi Rc)}\bigr)\Bigr)
 \ \leq\ 4\sqrt{\frac{R\,\eta}{\pi c}} .
\]
\end{corollary}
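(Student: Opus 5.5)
This is a direct specialization of Theorem~\ref{thm:concentration}: choose the radius that balances the two terms of \eqref{eq:concentration}. Set
\[
 r_*=\sqrt{\eta/(4\pi Rc)} .
\]
At this radius the linear term $4Rr$ and the defect term $\eta/(\pi c r)$ are equal. Indeed $4Rr_*^2=\eta/(\pi c)$, so $\eta/(\pi c r_*)=4Rr_*$.

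The first step is to check that $r_*$ is an admissible radius. The hypothesis $0<\eta\le 4\pi Rc\,r_0^2$ gives $\eta/(4\pi Rc)\le r_0^2$, so $0<r_*\le r_0$. Theorem~\ref{thm:concentration} therefore applies at $r=r_*$ for every $a\in L$. Here $\mu\in\M_f(K)$, as in the setting of that theorem.

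The second step is to substitute $r=r_*$ into \eqref{eq:concentration}. This gives
\[
 \mu\bigl(D(a,r_*)\bigr)\le 4Rr_*+\frac{\eta}{\pi c\,r_*}=8Rr_* .
\]
Next we compute
\[
 8Rr_*=8R\sqrt{\frac{\eta}{4\pi Rc}}=4\sqrt{\frac{4R^2\eta}{4\pi Rc}}=4\sqrt{\frac{R\eta}{\pi c}} .
\]
The right-hand side does not depend on $a$, so taking the supremum over $a\in L$ yields the claim.

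There is no real obstacle. The only point needing care is the admissibility check $r_*\le r_0$, which is exactly what the upper bound on $\eta$ is for. The strict positivity $\eta>0$ is needed only so that $r_*>0$; the zero-defect case is Corollary~\ref{cor:linear-growth}. The constants $R,c>0$ come from Lemma~\ref{lem:reverse} applied to $L'$, so the square roots are well defined.
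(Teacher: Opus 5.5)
Your proof is correct and is the paper's argument. The paper simply minimizes the right-hand side of \eqref{eq:concentration} over $r$, which lands at your balancing radius $r_*=\sqrt{\eta/(4\pi Rc)}$ with value $8Rr_*=4\sqrt{R\eta/(\pi c)}$; your explicit check that $r_*\le r_0$ spells out the role of the hypothesis $\eta\le4\pi Rc\,r_0^2$, which the paper leaves implicit.
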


\begin{proof}
Minimize the right-hand side of \eqref{eq:concentration} over $r>0$; the minimum
is attained at $r=\sqrt{\eta/(4\pi Rc)}$ and equals $4\sqrt{R\eta/(\pi c)}$.
\end{proof}

\begin{corollary}[A quantitative lower bound for a positive defect]
\label{cor:atom-bound}
With $L$, $r_0$, $R$, and $c$ as in
Theorem~\ref{thm:concentration}, suppose that $\mu\in\M_f(K)$ has an atom of
mass $m$ at a point $a\in L$.  Then
\begin{equation}\label{eq:atom-bound}
 \mathfrak D_{P,K}(\mu)\ge c_{\rm at}m^2,
 \qquad
 c_{\rm at}
 =\min\left\{\frac{\pi c}{16R},\frac{\pi c r_0}{2a_0}\right\}>0.
\end{equation}
In particular, if every $\mu\in\M_f(K)$ has an atom of mass at least $m_0$ at
some point of $L$, then $\delta_P(f;K)\ge c_{\rm at}m_0^2$.
\end{corollary}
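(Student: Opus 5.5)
The plan is to apply the concentration bound \eqref{eq:concentration} of Theorem~\ref{thm:concentration} at the atom, with $\eta=\mathfrak D_{P,K}(\mu)$, and choose $r$ to balance the two terms. The atom gives $m\le\mu(D(a,r))$ for every $r>0$. Hence for all $0<r\le r_0$,
\[
 m\le 4Rr+\frac{\eta}{\pi c\,r}.
\]
The argument splits into two cases according to whether the optimal radius $r_*=m/(8R)$ lies in the admissible range $(0,r_0]$.

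\emph{Case 1: $m/(8R)\le r_0$.} Take $r=m/(8R)$. Then $4Rr=m/2$, so $m/2\le \eta\cdot 8R/(\pi c\,m)$. This gives
\[
 \eta\ge \frac{\pi c}{16R}\,m^2 .
\]

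\emph{Case 2: $m/(8R)>r_0$.} Take $r=r_0$. Since $4Rr_0<m/2$, the bound yields $m/2\le \eta/(\pi c\,r_0)$, that is,
\[
 \eta\ge \frac{\pi c\,r_0}{2}\,m .
\]
To convert this linear bound into a quadratic one, I use $m\le\mu(\bC)=m_0(\mu)=a_0$, valid since $\mu\in\M_f(K)$ is positive of total mass $a_0$. Then $m\ge m^2/a_0$, so $\eta\ge \frac{\pi c r_0}{2a_0}m^2$.

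Combining the two cases gives \eqref{eq:atom-bound} with the stated $c_{\rm at}$. The final assertion follows by applying this to a minimizer $\mu_*\in\M_f(K)$, which exists by Theorem~\ref{thm:main-defect}. By hypothesis $\mu_*$ carries an atom of mass $m\ge m_0$ at some point of $L$. Since $c_{\rm at}$ does not depend on the point $a\in L$, we get $\delta_P(f;K)=\mathfrak D_{P,K}(\mu_*)\ge c_{\rm at}m^2\ge c_{\rm at}m_0^2$.

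The only delicate step is the case split forced by the restriction $r\le r_0$ in Theorem~\ref{thm:concentration}. Handling it requires the total-mass bound $m\le a_0$, and this is why $a_0$ enters the constant. Everything else is direct substitution.
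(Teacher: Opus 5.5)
Your proposal is correct and follows the paper's proof exactly: the same case split $m\le 8Rr_0$ versus $m>8Rr_0$, the same choices $r=m/(8R)$ and $r=r_0$ in \eqref{eq:concentration}, and the same use of $m\le\mu(K)=a_0$ to convert the linear bound into a quadratic one. Your explicit derivation of the final assertion via a minimizer from Theorem~\ref{thm:main-defect} fills in a step the paper leaves implicit.
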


\begin{proof}
If $m\le8Rr_0$, take $r=m/(8R)$ in \eqref{eq:concentration}; since
$\mu(D(a,r))\ge m$, this gives
\[
 m\le\frac m2+\frac{8R}{\pi c\,m}\mathfrak D_{P,K}(\mu),
\]
and hence the first constant in \eqref{eq:atom-bound}.  If $m>8Rr_0$, take
$r=r_0$.  Then $4Rr_0<m/2$, so
\[
 \mathfrak D_{P,K}(\mu)
 \ge \pi c r_0\bigl(m-4Rr_0\bigr)
 >\frac{\pi c r_0}{2}m
 \ge\frac{\pi c r_0}{2a_0}m^2,
\]
because $m\le\mu(K)=a_0$.
\end{proof}

The linear growth estimate does not exclude an absolutely continuous part.
The geometric conclusions below use the planar-null hypothesis in the
algebraic-support theorem.

\section{Support under a null-support hypothesis}
\label{sec:support}

\begin{theorem}[Algebraic-support theorem; {\cite{BBB}}]
\label{thm:external-support}
Let $\mu$ be a finite positive compactly supported measure on $\bC$.  Assume
that
\[
 A(\supp\mu)=0
 \qquad\text{and}\qquad
 P\bigl(z,\Cauchy_\mu(z)\bigr)=0
 \quad\text{for almost every }z\in\bC,
\]
where $P\in\bC[z,w]$ is irreducible.  Then $\supp\mu$ is a compact
real-analytic set of real dimension at most one.  Consequently, it is a finite
union of compact semi-analytic arcs and finitely many points; in particular, it
is $1$-rectifiable and has finite
$\Hone$-measure.
\end{theorem}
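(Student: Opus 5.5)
The strategy is to show that $\Cauchy_\mu$ coincides, off a real-analytic set of dimension at most one, with single-valued holomorphic branches of the algebraic function defined by $P(z,w)=0$. Then $\supp\mu$ is forced into the set where these branches cannot be glued holomorphically.

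\emph{Exceptional set.} Let $\Delta\subset\bC$ be the finite set consisting of $\Pi_P$ together with the zeros of the discriminant of $P$ in $w$; it is finite because $P$ is irreducible. Put $U=\bC\setminus(\supp\mu\cup\Delta)$. On $U$ the function $\Cauchy_\mu$ is holomorphic and $P(z,\Cauchy_\mu)=0$ there by continuity, so on each component of $U$ it equals one of the $d$ local branches $w_1,\dots,w_d$.

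\emph{Jump description of the support.} Since $A(\supp\mu)=0$, the complement $\bC\setminus\supp\mu$ is dense. I will show that every point $z_0\in\supp\mu\setminus\Delta$ lies in
\[
 \Sigma:=\bigcup_{i\neq j}\{z:\Re\bigl(\text{primitive of }(w_i-w_j)\bigr)\text{ is critical for jumps}\}.
\]
More concretely, $\Sigma$ is the set where two distinct local branches $w_i,w_j$ satisfy the balance condition forced by $\partial_{\bar z}\Cauchy_\mu=\pi\mu\ge0$. Near $z_0\notin\Delta$ the branches are holomorphic and distinct. Because $\supp\mu$ has planar measure zero, $\Cauchy_\mu$ is an $L^1_{\mathrm{loc}}$ function equal a.e.\ to a piecewise choice among $w_1,\dots,w_d$. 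Apply $\partial_{\bar z}$ distributionally to the indicator decomposition $\Cauchy_\mu=\sum_i w_i\mathbf 1_{E_i}$. This shows that $\mu$ is carried by the reduced boundaries of the sets $E_i$, with density $(w_i-w_j)\,\nu\,d\Hone/(2\pi i)$ on the interface between $E_i$ and $E_j$.

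Positivity of this density forces the interface direction: $(w_i-w_j)\,dz$ must be real, namely $\Re\bigl((w_i-w_j)\,dz\bigr)=0$ along it, and of the correct sign. Hence every interface lies on a level curve of the harmonic function $\Re\int(w_i-w_j)\,dz$. These level sets form a locally finite family of real-analytic arcs, namely the horizontal trajectories of the quadratic differential $-(w_i-w_j)^2dz^2$. The support near $z_0$ is therefore contained in the union of finitely many real-analytic trajectories through a neighbourhood of $z_0$.

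\emph{Main obstacle.} The hard step is this middle one, for three reasons.
\begin{itemize}[label=\textup{--}]
\item The sets $E_i$ are a priori only measurable, so I must first show that they have locally finite perimeter. I would get this from a BV argument: $\Cauchy_\mu$ takes finitely many holomorphic values, and $\partial_{\bar z}\Cauchy_\mu$ is a finite measure, which bounds the total variation of the pairwise differences weighted by $|w_i-w_j|\ge c>0$ away from $\Delta$.
\item I must exclude a dense family of interfaces. Any interface must lie on one of the finitely many admissible trajectory foliations. I would then use that only countably many leaves can carry positive mass, and that $\mu$ is finite.
\item I must show only finitely many leaves occur near $z_0$, which is the delicate part. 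I would argue via analyticity: $\Cauchy_\mu$ is holomorphic on each complementary component, and a leaf carrying mass bounds a component on which the branch changes. By the identity theorem, together with the monodromy of the irreducible curve, only finitely many such changes are compatible with local boundedness of $\Cauchy_\mu$ in $L^1$. If this fails I would fall back on citing \cite{BBB} for exactly this local finiteness.
\end{itemize}

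\emph{Global conclusion.} Once the local statement holds, $\supp\mu\setminus\Delta$ is locally a finite union of real-analytic arcs, and $\Delta$ is finite. By compactness, $\supp\mu$ is a compact real-analytic set of dimension at most one. The remaining conclusions are standard for such sets. The {\L}ojasiewicz structure theorem gives a finite union of compact semi-analytic arcs and points, which yields $1$-rectifiability and $\Hone(\supp\mu)<\infty$.
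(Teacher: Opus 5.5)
The paper gives no proof of this statement; it imports it from \cite{BBB}. Your proposal tries to supply a proof, but it has a genuine gap. The step you flag as the main obstacle is the whole content of the theorem and is not established, and two intermediate claims fail as stated.

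First, the finite-perimeter step works only for $d=2$. There $\Cauchy_\mu=w_2+(w_1-w_2)\mathbf{1}_{E_1}$ gives $\partial_{\bar z}\mathbf{1}_{E_1}=\pi\mu/(w_1-w_2)$, which is a measure; since $\mathbf{1}_{E_1}$ is real, $\nabla\mathbf{1}_{E_1}$ is a measure too. For $d\ge3$ you only know $\sum_i w_i\,\partial_{\bar z}\mathbf{1}_{E_i}=\pi\mu$ and $\sum_i\partial_{\bar z}\mathbf{1}_{E_i}=0$. These two relations do not control the individual $\partial_{\bar z}\mathbf{1}_{E_i}$. Your bound ``weighted by $|w_i-w_j|\ge c$'' presupposes a cancellation-free decomposition of $\mu$ into interface pieces, which is exactly what has to be proved.

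Second, suppose the interfaces are rectifiable with the admissible tangent direction $\Hone$-a.e. It still does not follow that they lie on level curves of $g_{ij}=\Re\int(w_i-w_j)\,dz$. Those level curves foliate a neighbourhood of $z_0$, so they are not a locally finite family. Moreover, a.e.\ tangency yields, via the coarea formula, only that $g_{ij}$ maps the interface onto a Lebesgue-null set. For example, let $C\subset[0,1]$ be a fat Cantor set and $\phi(x)=\int_0^x\dist(t,C)\,dt$. The graph of $\phi$ over $C$ has positive length and horizontal tangents everywhere, yet meets every horizontal line in at most one point.

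Even countably many mass-carrying leaves with summable masses can accumulate, so finiteness of $\mu$ does not help. ``Local boundedness of $\Cauchy_\mu$ in $L^1$'' is vacuous, since $\Cauchy_\mu$ is automatically bounded near $z_0\notin\Delta$. Monodromy plays no role in a small disc where all branches are single-valued. Falling back on \cite{BBB} ``for exactly this local finiteness'' is citing the theorem itself.

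The missing idea is to use positivity as a monotonicity constraint rather than pointwise. Consider the model case where the branches are constants $a_1,\dots,a_d$ and $\Cauchy_\mu$ is constant on horizontal strips. Positivity of the jump $\tfrac{i}{2}(a_{\rm up}-a_{\rm low})$ across each interface forces $\Re a_{\rm up}=\Re a_{\rm low}$ and $\Im a_{\rm up}<\Im a_{\rm low}$. Thus $\Im\Cauchy_\mu$ strictly decreases from strip to strip, at most $d-1$ interfaces occur, and accumulation is impossible.

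In general this role is played by the logarithmic potential $U^\mu=\int\log|z-\zeta|\,d\mu(\zeta)$. Since $\partial_zU^\mu=\tfrac{1}{2}\Cauchy_\mu$, near $z_0\notin\Delta$ it is Lipschitz and subharmonic, and its gradient takes a.e.\ only the finitely many values $\nabla\Re\int w_i\,dz$. The heart of the matter is showing that only finitely many additive constants $U^\mu-\Re\int w_i\,dz$ occur on the complementary components. As the paper's remark indicates, \cite{BBB} works in this subharmonic framework.

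Your global step has a further gap. Local finiteness on $\supp\mu\setminus\Delta$ says nothing at the points of $\Delta$, because $\supp\mu\setminus\Delta$ is not compact. Infinitely many arcs could accumulate at a branch point or pole, or an arc could spiral into it, and the resulting set would not be semi-analytic. Excluding this needs a separate argument at $\Delta$, which your proposal does not provide.
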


\begin{remark}[Scope of the quoted theorem]
The paper \cite{BBB} studies subharmonic functions whose Laplacian is supported
on a planar null set and proves the algebraic support description within that
class.  Its probability normalization causes no restriction here: replacing
$\mu$ by $\mu/\mu(\bC)$ preserves the support and merely rescales the second
variable in the defining polynomial.  Thus Theorem~\ref{thm:external-support}
cannot be deduced from the almost-everywhere algebraic equation alone.  The results of
Sections~\ref{sec:moment}--\ref{sec:quantitative} do not use it.  The removal of
the null-support assumption is Question~\ref{q:null-support}.
\end{remark}

\begin{proposition}[Relation to mother-body measures]
\label{prop:motherbody-relation}
The following statements hold.
\begin{enumerate}[label=\textup{(\roman*)}]
\item Every positive mother-body measure of $f$ supported in $K$ belongs to
$\mathcal Z_{P,f}(K)$; in particular, its existence implies
$\delta_P(f;K)=0$.
\item If $\mu\in\mathcal Z_{P,f}(K)$ and $A(\supp\mu)=0$, then
Theorem~\ref{thm:external-support} applies, and $\mu$ is a positive mother-body
measure in the sense of \cite{BoSh}.
\item Consequently, if every measure in $\mathcal Z_{P,f}(K)$ has planar-null
support, then
\begin{equation}\label{eq:conditional-equivalence}
 \delta_P(f;K)=0
 \quad\Longleftrightarrow\quad
 f\text{ has a positive mother-body measure supported in }K.
\end{equation}
\end{enumerate}
\end{proposition}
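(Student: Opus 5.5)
For \textup{(i)}, I will unwind the definition of a positive mother-body measure. Let $\mu$ be such a measure for $f$ with $\supp\mu\subset K$. It is positive, and its Cauchy transform has germ $f$ at infinity, so $\mu\in\M_f(K)$. Its support $S$ is a finite union of compact semi-analytic arcs and isolated points. Hence $S$ has finite $\Hone$-measure, and in particular $A(S)=0$. On each component $U$ of $\bC\setminus S$, the function $\Cauchy_\mu$ is holomorphic and coincides with a branch of $P(z,w)=0$, so $P(z,\Cauchy_\mu(z))=0$ identically on $U$. Since $S$ is planar-null, this gives $P(z,\Cauchy_\mu(z))=0$ almost everywhere in $\bC$. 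Definition~\ref{def:algebraic-defect} then yields $\mathfrak D_{P,K}(\mu)=0$, so $\mu\in\mathcal Z_{P,f}(K)$ and $\delta_P(f;K)=0$.

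For \textup{(ii)}, let $\mu\in\mathcal Z_{P,f}(K)$ with $A(\supp\mu)=0$. By Theorem~\ref{thm:main-defect} (or directly from the vanishing of the integrand on $K$ together with Remark~\ref{rem:ambient}), $P(z,\Cauchy_\mu(z))=0$ for almost every $z\in\bC$. Since $P$ is irreducible, Theorem~\ref{thm:external-support} applies, and $\supp\mu$ is a finite union of compact semi-analytic arcs and points. It remains to see that $\Cauchy_\mu$ is a branch of $P=0$ in each component $U$ of $\bC\setminus\supp\mu$. There $\Cauchy_\mu$ is holomorphic. The function $z\mapsto P(z,\Cauchy_\mu(z))$ is holomorphic on $U$ and vanishes almost everywhere, hence vanishes identically. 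So $\Cauchy_\mu$ is a single-valued analytic solution of $P(z,w)=0$ on $U$, that is, a branch. Positivity and the germ at infinity come from $\mu\in\M_f(K)$.

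For \textup{(iii)}, the implication ``$\Leftarrow$'' is \textup{(i)}. For ``$\Rightarrow$'', Theorem~\ref{thm:main-defect} gives a minimizer $\mu_*\in\M_f(K)$, and $\delta_P(f;K)=0$ places it in $\mathcal Z_{P,f}(K)$. By hypothesis $A(\supp\mu_*)=0$, and \textup{(ii)} shows that $\mu_*$ is a mother-body measure supported in $K$.

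The only delicate point is the passage in \textup{(ii)} from ``a.e.\ algebraic'' to ``finite union of semi-analytic arcs'', which we fully delegate to Theorem~\ref{thm:external-support}. The remaining steps are the identity theorem plus the observation that the exceptional sets are Lebesgue-null.
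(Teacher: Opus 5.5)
Your proof is correct and follows essentially the same route as the paper: for (i), planar-nullness of the mother-body support together with the branch property gives the almost-everywhere algebraic equation; for (ii), Theorem~\ref{thm:main-defect} combined with Theorem~\ref{thm:external-support} applies; and (iii) follows by taking a minimizer. You also spell out the identity-theorem step showing that $\Cauchy_\mu$ is a branch on each complementary component, which the paper leaves implicit.
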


\begin{proof}
A mother-body measure is supported on finitely many curves and points, hence on
a planar null set.  Off its support, its Cauchy transform is locally a branch
of $P(z,w)=0$; therefore the algebraic equation holds almost everywhere and the
defect vanishes.  Conversely, a zero-defect measure satisfies the algebraic
equation almost everywhere by Theorem~\ref{thm:main-defect}; under the stated
null-support hypothesis, Theorem~\ref{thm:external-support} gives the required
geometric support description.
\end{proof}

\begin{corollary}[Density and length under the null-support hypothesis]\label{cor:length}
Let $\mu\in\mathcal Z_{P,f}(K)$ and assume $A(\supp\mu)=0$.  Put
$\Gamma=\supp\mu$, and let $L$, $r_0$, and $R$ be as in
Theorem~\ref{thm:concentration}.  Then
\begin{equation}\label{eq:density-representation}
 \mu\lfloor L=\theta\,\Hone\lfloor(\Gamma\cap L).
\end{equation}
Here $\mu\lfloor L$ denotes the restriction of $\mu$ to $L$, while
$\Hone\lfloor(\Gamma\cap L)$ denotes one-dimensional Hausdorff measure
restricted to $\Gamma\cap L$.  Thus \eqref{eq:density-representation} means
that, for every Borel set $E\subset\bC$,
\begin{equation}\label{eq:density-representation-expanded}
 \mu(E\cap L)=\int_{E\cap\Gamma\cap L}\theta\,d\Hone.
\end{equation}
Since $\Gamma$ is $1$-rectifiable, $\Hone$ agrees with ordinary arclength on
its rectifiable arcs.  Consequently, the formula says that away from
$\Pi_P$ the measure $\mu$ is given by a density $\theta$ with respect to
arclength on its support.  The estimate $\theta\le2R$ holds for
$\Hone$-almost every point of $\Gamma\cap L$; this means that it may fail only
on a subset of $\Gamma\cap L$ having zero $\Hone$-measure, or equivalently
zero arclength.  If, in addition, $p_d$ has no
zero in $K$, then
\begin{equation}\label{eq:length-bound}
 \Hone(\supp\mu)\ge\frac{a_0}{2R},
\end{equation}
where $R$ is computed on a compact neighborhood of $K$ disjoint from
$\Pi_P$.
\end{corollary}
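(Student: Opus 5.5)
By Theorem~\ref{thm:external-support}, $\Gamma=\supp\mu$ is a finite union of compact semi-analytic arcs and finitely many points. It is therefore $1$-rectifiable with $\Hone(\Gamma)<\infty$, and $\mu$ is concentrated on $\Gamma$. The plan is to obtain the density representation \eqref{eq:density-representation} from the Radon--Nikodym theorem and to bound the density with a differentiation theorem. Both steps rest on the linear-growth estimate of Corollary~\ref{cor:linear-growth}, and the length bound then follows by integrating the density.

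\emph{Step 1: absolute continuity.} Corollary~\ref{cor:linear-growth} gives $\mu(E)=0$ for every Borel set $E\subset L$ with $\Hone(E)=0$. Since $\mu$ is carried by $\Gamma$, the restriction $\mu\lfloor L$ is absolutely continuous with respect to the finite measure $\Hone\lfloor(\Gamma\cap L)$. The Radon--Nikodym theorem then produces a Borel density $\theta\ge0$ with \eqref{eq:density-representation-expanded}.

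\emph{Step 2: the bound $\theta\le 2R$.} For a $1$-rectifiable set $\Gamma$, the density of $\Hone\lfloor\Gamma$ satisfies
\[
\lim_{r\to0}\frac{\Hone(\Gamma\cap D(a,r))}{2r}=1
\]
for $\Hone$-a.e.\ $a\in\Gamma$. Combining this with the Lebesgue--Besicovitch differentiation theorem for the Radon measure $\Hone\lfloor(\Gamma\cap L)$, we get, for $\Hone$-a.e.\ $a\in\Gamma\cap L$,
\[
\theta(a)=\lim_{r\to0}\frac{\mu(D(a,r)\cap L)}{\Hone(\Gamma\cap L\cap D(a,r))}.
\]
For points in the relative interior of $L$, the intersection with $L$ is irrelevant for small $r$. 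At a boundary point of $L$, the lower density of $\Gamma\cap L$ might drop below $1$, so I would first enlarge $L$ slightly to a compact $\tilde L$, which is allowed since $r_0$ is determined by a margin. Then $\mu(D(a,r))\le 4Rr$ for small $r$, while the denominator is $\sim 2r$. This gives $\theta(a)\le 2R$ $\Hone$-a.e.

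Handling this boundary issue is the step I expect to need the most care. An alternative is to use the density theorem at $\Hone$-a.e.\ point of $\Gamma\cap L$ directly. Points of $\Gamma\cap L$ at which $\Gamma\cap L$ has density $1$ form an $\Hone$-full subset, by the density theorem applied to the rectifiable set $\Gamma\cap L$ itself. At such points the ratio is still bounded by $4Rr/(2r(1+o(1)))\to 2R$, because $\mu(D(a,r)\cap L)\le\mu(D(a,r))\le 4Rr$.

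\emph{Step 3: the length bound.} Suppose $p_d$ has no zero in $K$. Since $K$ is compact, choose a compact neighborhood $L\supset K$ with $L\cap\Pi_P=\varnothing$, enlarging $\Omega$ as permitted by Remark~\ref{rem:ambient}, and let $R$ be the corresponding constant. Then $\supp\mu\subset K\subset L$, so
\[
a_0=\mu(K)=\int_{\Gamma}\theta\,d\Hone\le 2R\,\Hone(\Gamma),
\]
which is \eqref{eq:length-bound}.
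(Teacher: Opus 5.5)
Your proposal is correct and follows the paper's proof essentially step by step: Radon--Nikodym after Corollary~\ref{cor:linear-growth}, then Lebesgue--Besicovitch differentiation combined with the density theorem for $\Gamma\cap L$ to get $\theta\le 2R$, and finally integration of $\theta$ with $L\supset K$ for the length bound. Your ``alternative'' in Step~2 is exactly the paper's argument and makes enlarging $L$ unnecessary, although that route also works, since $\mu(D(a,r))\le 4Rr$ holds at every $a\in L$ with the original $R$.
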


\begin{proof}
By Theorem~\ref{thm:external-support}, $\Gamma$ is $1$-rectifiable and has
finite $\Hone$-measure.  Corollary~\ref{cor:linear-growth} shows that
$\mu\lfloor L$ charges no $\Hone$-null set.  Since $\mu$ is supported on
$\Gamma$, this is precisely the absolute continuity
\[
 \mu\lfloor L\ll\Hone\lfloor(\Gamma\cap L).
\]
The Radon--Nikodym theorem therefore gives a Borel function $\theta\ge0$ for
which \eqref{eq:density-representation}--\eqref{eq:density-representation-expanded}
hold.  At $\Hone$-almost every $a\in\Gamma\cap L$, the point $a$ is both a
Lebesgue point of $\theta$ with respect to
$\Hone\lfloor(\Gamma\cap L)$ and a density point of $\Gamma\cap L$ relative
to $\Gamma$.  The differentiation theorem and the density theorem for
rectifiable sets~\cite[Ch.~16]{Mattila} therefore give
\[
 \theta(a)
 =\lim_{r\downarrow0}
   \frac{\mu(L\cap D(a,r))}
        {\Hone(\Gamma\cap L\cap D(a,r))},
 \qquad
 \Hone(\Gamma\cap L\cap D(a,r))=2r(1+o(1)).
\]
The second limit says that a small disk centered at a typical point of the
rectifiable support contains asymptotically a curve segment of length $2r$.
Since $\mu(L\cap D(a,r))\le\mu(D(a,r))\le4Rr$, division by
$2r(1+o(1))$ yields $\theta(a)\le2R$.  If
$p_d$ has no zero in $K$, choose $L$ to contain $K$ and be disjoint from
$\Pi_P$.  Since $\mu(K)=a_0$,
\[
 a_0=\int_\Gamma\theta\,d\Hone
 \le2R\,\Hone(\Gamma),
\]
which proves \eqref{eq:length-bound}.
\end{proof}

\section{The rational case}\label{sec:rational}

Assume $d=1$ and
\[
 P(z,w)=Q(z)w-R(z),\qquad \gcd(Q,R)=1,\qquad f=\frac RQ ,
\]
so that irreducibility of $P$ is equivalent to $\gcd(Q,R)=1$ and $\Pi_P=Q^{-1}(0)$
is the pole set of $f$.

\begin{lemma}[The defect as a Cauchy transform]\label{lem:rational-identity}
For $\mu\in\M_f(K)$ let $Q\mu$ denote the complex measure $Q(z)\,d\mu(z)$.  Then
\[
 Q\Cauchy_\mu-R=\Cauchy_{Q\mu}
 \qquad\text{in }L^1_{\mathrm{loc}}(\bC),
\]
this function vanishes identically on $\bC\setminus K$, and
\begin{equation}\label{eq:rational-identity}
 \mathfrak D_{P,K}(\mu)
 =\int_K\bigl|Q(z)\Cauchy_\mu(z)-R(z)\bigr|\,dA(z)
 =\bigl\|\Cauchy_{Q\mu}\bigr\|_{L^1(\bC)} .
\end{equation}
In particular $\mathfrak D_{P,K}$ is a convex functional on $\M_f(K)$, and all
holomorphic moments of $Q\mu$ vanish:
\begin{equation}\label{eq:vanishing-moments}
 \int_Kz^{n}Q(z)\,d\mu(z)=0\qquad\text{for all }n\geq0 .
\end{equation}
\end{lemma}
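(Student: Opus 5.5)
The plan is to prove the identity $Q\Cauchy_\mu-R=\Cauchy_{Q\mu}$ by a uniqueness argument for the $\bar\partial$-equation \eqref{eq:dbar-Cauchy}, and then read off the remaining claims.

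First, compare distributional $\bar z$-derivatives. Since $Q$ is a polynomial, hence smooth and holomorphic, the Leibniz rule together with \eqref{eq:dbar-Cauchy} gives
\[
 \partial_{\bar z}\bigl(Q\Cauchy_\mu-R\bigr)=Q\,\partial_{\bar z}\Cauchy_\mu=\pi Q\mu .
\]
Also $\partial_{\bar z}\Cauchy_{Q\mu}=\pi Q\mu$, because $Q\mu$ is a finite complex measure with compact support. The difference
\[
 h=Q\Cauchy_\mu-R-\Cauchy_{Q\mu}
\]
therefore satisfies $\partial_{\bar z}h=0$ in the sense of distributions. By Weyl's lemma, $h$ is an entire function, and it lies in $L^1_{\mathrm{loc}}$.

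Next, show $h\equiv0$ by looking at $\bC\setminus K$. On this set $\Cauchy_\mu=f=R/Q$, because $\bC\setminus K$ is connected and the germ is $f$ (see the discussion after \eqref{eq:moment-class}). Hence $Q\Cauchy_\mu-R=0$ on $\bC\setminus K$, which also gives the claimed vanishing off $K$. Moreover, $\Cauchy_{Q\mu}(z)\to0$ as $z\to\infty$. So the entire function $h$ tends to $0$ at infinity, and Liouville gives $h\equiv0$. This is the only delicate point: the equality holds in $L^1_{\mathrm{loc}}$, i.e.\ almost everywhere, which suffices.

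With the identity in hand, the two displayed equalities in \eqref{eq:rational-identity} follow directly. Since $d=1$, we have $|P(z,\Cauchy_\mu)|^{1/d}=|Q\Cauchy_\mu-R|$, which equals $|\Cauchy_{Q\mu}|$ almost everywhere. This function vanishes off $K$, so its integral over $K$ equals its $L^1(\bC)$ norm.

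For convexity, the map $\mu\mapsto\Cauchy_{Q\mu}$ is linear, $L^1$ norms are convex, and $\M_f(K)$ is convex by Lemma~\ref{lem:compact-class}. Hence $\mathfrak D_{P,K}$ is convex on $\M_f(K)$.

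Finally, for the vanishing moments, expand at infinity:
\[
 \Cauchy_{Q\mu}(z)=\sum_{n\ge0}\frac{m_n(Q\mu)}{z^{n+1}}\quad\text{for }|z|\text{ large}.
\]
This function vanishes identically outside $K$, so every coefficient is zero, i.e. $\int_K z^nQ(z)\,d\mu(z)=0$ for all $n\ge0$. That is \eqref{eq:vanishing-moments}.
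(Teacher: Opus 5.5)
Your proposal is correct and follows essentially the same route as the paper: both $Q\Cauchy_\mu-R$ and $\Cauchy_{Q\mu}$ have distributional $\bar\partial$-derivative $\pi Q\mu$, so their difference is entire. It vanishes off $K$ up to the decaying term $-\Cauchy_{Q\mu}$, hence is identically zero, and the $L^1$ formula, convexity, and vanishing moments are then read off exactly as in the paper.
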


\begin{proof}
Both $Q\Cauchy_\mu-R$ and $\Cauchy_{Q\mu}$ lie in $L^1_{\mathrm{loc}}$ and, since
$Q$ and $R$ are holomorphic, both have distributional $\bar\partial$-derivative
$\pi Q\mu$; their difference is therefore entire.  Off $K$ we have
$\Cauchy_\mu=f=R/Q$, so $Q\Cauchy_\mu-R=0$ there, while $\Cauchy_{Q\mu}\to0$ at
infinity; hence the entire difference vanishes and both functions are supported
in $K$.  Formula \eqref{eq:rational-identity} follows, and convexity is clear
because $\mu\mapsto\Cauchy_{Q\mu}$ is affine and the $L^1$ norm is convex.
Finally, the Laurent coefficients of $\Cauchy_{Q\mu}$ at infinity are the
moments $m_n(Q\mu)$, and $\Cauchy_{Q\mu}\equiv0$ near infinity.
\end{proof}

\begin{theorem}[Exact dual formula for the rational defect]
\label{thm:rational-dual}
Let
\[
 \mathcal Y_{Q,f}(K)
 =\{\Cauchy_{Q\mu}:\mu\in\M_f(K)\}\subset L^1(K,dA).
\]
Then $\mathcal Y_{Q,f}(K)$ is compact and convex and
\begin{equation}\label{eq:rational-dual}
 \delta_P(f;K)
 =\operatorname{dist}_{L^1}(0,\mathcal Y_{Q,f}(K))
 =\sup_{\substack{h\in L^\infty(K)\\ \|h\|_\infty\leq1}}
   \ \inf_{\mu\in\M_f(K)}
   \Re\int_K h(z)\Cauchy_{Q\mu}(z)\,dA(z).
\end{equation}
Consequently $\delta_P(f;K)>0$ if and only if there are
$h\in L^\infty(K)$ and $c>0$ such that
\[
 \Re\int_K h\,\Cauchy_{Q\mu}\,dA\geq c
 \qquad\text{for every }\mu\in\M_f(K).
\]
Moreover, for every $\varphi\in C_c^1(\bC)$ with
$\|\partial_{\bar z}\varphi\|_\infty>0$,
\begin{equation}\label{eq:smooth-dual-lower-bound}
 \delta_P(f;K)
 \geq
 \frac{\pi}{\|\partial_{\bar z}\varphi\|_\infty}
 \inf_{\mu\in\M_f(K)}
 \left|\int_K \varphi(z)Q(z)\,d\mu(z)\right|.
\end{equation}
\end{theorem}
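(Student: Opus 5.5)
The plan is to reduce everything to the identity of Lemma~\ref{lem:rational-identity}, which writes $\mathfrak D_{P,K}(\mu)=\|\Cauchy_{Q\mu}\|_{L^1(K)}$, and then apply elementary convex duality in $L^1$.

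\emph{Compactness and convexity of $\mathcal Y_{Q,f}(K)$.} The map $T:\mu\mapsto\Cauchy_{Q\mu}$ is affine, and $\M_f(K)$ is convex, so the image is convex. For compactness, let $\mu_n\to\mu$ weak-$*$ in $\M_f(K)$. Since $Q$ is continuous on $K$, we get $Q\mu_n\to Q\mu$ weak-$*$, and the total variations are bounded by $a_0\max_K|Q|$. Lemma~\ref{lem:L1-Cauchy-continuity} then gives $\Cauchy_{Q\mu_n}\to\Cauchy_{Q\mu}$ in $L^1(\Omega)$, hence in $L^1(K)$. So $T$ is continuous from the weak-$*$ compact metrizable set $\M_f(K)$ (Lemma~\ref{lem:compact-class}) into $L^1(K)$, and its image is compact. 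The first equality in \eqref{eq:rational-dual} is then just \eqref{eq:rational-identity} together with $\delta_P=\min_\mu\mathfrak D_{P,K}(\mu)$. The distance is attained because the image is compact.

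\emph{The sup--inf formula.} Write $\|y\|_{L^1}=\sup_{\|h\|_\infty\le1}\Re\int_K hy\,dA$. The inequality "$\ge$" is immediate: for every $h$ and every $\mu$,
\[
\inf_\nu\Re\int h\,\Cauchy_{Q\nu}\,dA\le\Re\int h\,\Cauchy_{Q\mu}\,dA\le\|\Cauchy_{Q\mu}\|_{L^1}.
\]
Minimizing over $\mu$ gives it.

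For "$\le$", if $\delta=0$ there is nothing to prove, since $h=0$ yields $0$. Otherwise, $\mathcal Y$ is compact and convex and is disjoint from the open ball $B(0,\delta)$ in $L^1(K)$. Hahn--Banach separation gives a functional $\ell\in(L^1)^*=L^\infty$, realized as $y\mapsto\Re\int hy$ with $\|h\|_\infty=1$, such that $\Re\int hy\ge\sup_{B(0,\delta)}\Re\ell=\delta$ for all $y\in\mathcal Y$. (Equivalently, one can apply a minimax theorem, e.g.\ Sion's, to the bilinear pairing on the compact convex $\mathcal Y$ times the weak-$*$ compact unit ball of $L^\infty$.) This is the main technical step; the only care needed is the real-versus-complex form of the duality, which is handled by taking real parts.

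The "consequently" statement follows directly. If $\delta>0$, the separating $h$ works with $c=\delta$. Conversely, such $h,c$ give $\delta\ge c/\|h\|_\infty>0$ after normalizing $h$ to have sup-norm at most $1$.

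\emph{The smooth lower bound \eqref{eq:smooth-dual-lower-bound}.} Take $h=\partial_{\bar z}\varphi/\|\partial_{\bar z}\varphi\|_\infty$, multiplied by a unimodular constant to be chosen. By \eqref{eq:dbar-Cauchy} applied to the complex measure $Q\mu$, we have
\[
\int\Cauchy_{Q\mu}\,\partial_{\bar z}\varphi\,dA=-\pi\int\varphi\,Q\,d\mu,
\]
and the integral may be taken over $\bC$ or over $K$ interchangeably, because $\Cauchy_{Q\mu}$ vanishes off $K$ by Lemma~\ref{lem:rational-identity}. Hence for every $\mu$,
\[
\mathfrak D_{P,K}(\mu)\ge\frac{1}{\|\partial_{\bar z}\varphi\|_\infty}\Bigl|\int\Cauchy_{Q\mu}\,\partial_{\bar z}\varphi\,dA\Bigr|=\frac{\pi}{\|\partial_{\bar z}\varphi\|_\infty}\Bigl|\int_K\varphi Q\,d\mu\Bigr|.
\]
Taking the infimum over $\mu\in\M_f(K)$ proves the bound. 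This step is a direct pointwise estimate and does not need the minimax argument.
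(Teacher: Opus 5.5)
Your proposal is correct and follows the paper's proof essentially step for step. The paper uses continuity of the affine map $\mu\mapsto\Cauchy_{Q\mu}$ (via Lemmas~\ref{lem:compact-class} and~\ref{lem:L1-Cauchy-continuity}) for compactness and convexity, Lemma~\ref{lem:rational-identity} for the first equality, Hahn--Banach in the real Banach space underlying $L^1(K)$ for the sup--inf formula and the positivity criterion, and the direct $\partial_{\bar z}$ integration-by-parts estimate for \eqref{eq:smooth-dual-lower-bound}; your explicit separation of $\mathcal Y_{Q,f}(K)$ from the open ball $B(0,\delta)$ simply spells out the distance formula the paper cites.
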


\begin{proof}
By Lemmas~\ref{lem:compact-class} and~\ref{lem:L1-Cauchy-continuity}, the map
$\mu\mapsto\Cauchy_{Q\mu}$ is continuous from the weak-$*$ compact convex set
$\M_f(K)$ into $L^1(K)$; hence its image $\mathcal Y_{Q,f}(K)$ is compact and
convex.  Lemma~\ref{lem:rational-identity} identifies the defect with the
$L^1$ norm on this image.  The second equality in
\eqref{eq:rational-dual} is the Hahn--Banach distance formula for a closed
convex subset of the real Banach space underlying $L^1(K)$, whose dual is
$L^\infty(K)$.  The separation statement is the corresponding strict
separation criterion.

For the last assertion put $\nu=Q\mu$.  Distributionally,
$\partial_{\bar z}\Cauchy_\nu=\pi\nu$, and therefore
\[
 \pi\left|\int\varphi\,d\nu\right|
 =\left|\int_{\bC}\Cauchy_\nu\,
        \partial_{\bar z}\varphi\,dA\right|
 \leq \|\partial_{\bar z}\varphi\|_\infty
       \|\Cauchy_\nu\|_{L^1(\bC)}.
\]
Now use Lemma~\ref{lem:rational-identity}, take the infimum over $\mu$, and
obtain \eqref{eq:smooth-dual-lower-bound}.
\end{proof}

\begin{theorem}[Rational case]\label{thm:rational}
With the notation above, $\delta_P(f;K)=0$ if and only if all poles of $f$ are
simple, lie in $K$, and have positive residues.  In that case the unique zero
minimizer is $\mu=\sum_k c_k\delta_{z_k}$, where $z_k$ are the poles and
$c_k=\operatorname{res}_{z_k}f>0$.
\end{theorem}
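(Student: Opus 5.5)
The proof has two directions, both built on the identity of Lemma~\ref{lem:rational-identity}: $\delta_P(f;K)=0$ holds exactly when some $\mu\in\M_f(K)$ satisfies $\Cauchy_{Q\mu}=0$ in $L^1$, that is, $Q\mu=0$ as a measure, by \eqref{eq:dbar-Cauchy}.

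\textbf{Necessity.} Assume $\delta_P(f;K)=0$ and let $\mu$ be a minimizer, which exists by Theorem~\ref{thm:main-defect}. Then $\Cauchy_{Q\mu}\equiv0$ almost everywhere. Applying $\partial_{\bar z}$ gives $Q\mu=0$, so $\mu$ is supported on the finite set $Q^{-1}(0)\cap K$. Hence $\mu=\sum_k c_k\delta_{z_k}$ with $c_k\ge0$ and $z_k\in K$ zeros of $Q$. Its Cauchy transform is $\sum_k c_k/(z-z_k)$. This function agrees with $f$ off $K$, so by the identity theorem $f=\sum_k c_k/(z-z_k)$ as rational functions. It follows that every pole of $f$ is simple, lies among the $z_k\in K$, and has residue $c_k$. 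Since $\gcd(Q,R)=1$, every zero of $Q$ is a genuine pole of $f$. So the poles are exactly the $z_k$ with $c_k>0$: an atom with $c_k=0$ is simply absent, and a zero of $Q$ outside $K$ would be a pole of $f$ not represented, which is a contradiction. Thus all poles lie in $K$ and have positive residues.

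\textbf{Sufficiency.} Assume all poles $z_k$ of $f$ are simple, lie in $K$, and have residues $c_k>0$. Because $f=a_0/z+O(z^{-2})$ vanishes at infinity, $f$ is a proper rational function, and partial fractions give $f=\sum_k c_k/(z-z_k)$. Therefore $\mu=\sum_k c_k\delta_{z_k}$ is a positive measure in $\M_f(K)$. Moreover $Q(z_k)=0$, so $Q\mu=0$ and $\mathfrak D_{P,K}(\mu)=0$.

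\textbf{Uniqueness.} The necessity argument shows that any zero-defect measure is atomic on the zeros of $Q$ in $K$ with weights equal to the residues of $f$. These weights are determined by $f$, so the zero-defect measure is unique.

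\textbf{Main subtle point.} The step needing care is the claim that $\Cauchy_{Q\mu}=0$ forces $Q\mu=0$ and hence $\supp\mu\subset Q^{-1}(0)$. This follows from the distributional identity $\partial_{\bar z}\Cauchy_{Q\mu}=\pi Q\mu$ together with the fact that $Q$ is nonvanishing off a finite set. Positivity of $\mu$ is then used only to obtain $c_k\ge0$. No quantitative estimate is needed anywhere.
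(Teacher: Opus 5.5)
Your proof is correct and follows the paper's argument: Lemma~\ref{lem:rational-identity} reduces zero defect to $Q\mu=0$, so $\mu$ is atomic on $Q^{-1}(0)\cap K$. Comparing $\sum_k c_k/(z-z_k)$ with $R/Q$ (using $\gcd(Q,R)=1$) then gives necessity, the partial-fraction expansion of the proper rational function $f$ gives sufficiency, and uniqueness is read off from the same description. Your explicit check that no zero of $Q$ can lie outside $K$ and that zero-weight atoms are harmless makes visible a step the paper leaves implicit.
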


\begin{proof}
By \eqref{eq:rational-identity}, $\mathfrak D_{P,K}(\mu)=0$ if and only if
$\Cauchy_{Q\mu}\equiv0$, i.e.\ if and only if $Q\mu=0$ by
\eqref{eq:dbar-Cauchy}; equivalently $\mu(\{Q\neq0\})=0$, i.e.\ $\supp\mu$ is
contained in the finite set $Q^{-1}(0)$.  Thus $\mu=\sum_kc_k\delta_{z_k}$ with
$c_k>0$ and $Q(z_k)=0$, $z_k\in K$, and the constraint $\Cauchy_\mu=f$ off $K$
reads $\sum_kc_k/(z-z_k)=R/Q$: all poles of $f$ are simple with positive
residues $c_k$ and lie in $K$.  Conversely, if $f$ has only simple poles $z_k\in
K$ with residues $c_k>0$, then (using $f(z)=O(1/z)$ at infinity) the partial
fraction expansion is $f=\sum_kc_k/(z-z_k)$, and $\mu=\sum_kc_k\delta_{z_k}$
lies in $\M_f(K)$ with $Q\mu=0$.  Uniqueness of the zero minimizer is clear from
the same description.
\end{proof}

\begin{example}[A positive algebraic germ with positive defect]
\label{ex:positive-defect}
Let $\sigma$ be the measure on the unit circle with density
\[
 d\sigma\bigl(e^{i\theta}\bigr)
 =\frac{1}{2\pi}\Bigl(1+\tfrac12\cos\theta\Bigr)\,d\theta\ >\ 0 .
\]
Its holomorphic moments are $m_0=1$, $m_1=\tfrac14$ and $m_n=0$ for $n\geq2$,
so the exterior Cauchy transform is the rational germ
\[
 f(z)=\frac1z+\frac{1}{4z^{2}}=\frac{4z+1}{4z^{2}} ,
 \qquad
 P(z,w)=4z^{2}w-4z-1 ,
\]
which is irreducible with $d=1$, $Q(z)=4z^2$, $R(z)=4z+1$.  Here
$K=\Conv(\supp\sigma)=\overline{\Db}$ is the closed unit disc, and
$\M_f(K)\ni\sigma$ is nonempty.  By Theorem~\ref{thm:rational} a vanishing
defect would force $\supp\mu\subset Q^{-1}(0)=\{0\}$, hence $\mu=a_0\delta_0=
\delta_0$; but then $m_1(\mu)=0\neq\tfrac14$, contradicting the moment
constraint.  Therefore
\[
 \delta_P\bigl(f;\overline{\Db}\bigr)>0 ,
\]
and this positive value is attained.  By Theorem~\ref{thm:rational}, the same
conclusion holds for every compact convex $K'\supset\overline{\Db}$.
\end{example}

\section{Constructing classes of positive algebraic Cauchy transforms}
\label{sec:constructive}

We give a real-line criterion and several constructions of positive algebraic
Cauchy transforms.

\subsection{Algebraic Herglotz functions}

\begin{theorem}[Real-line criterion]\label{thm:herglotz}
Let $E\subset\bR$ be a finite union of compact intervals and points, and let
$f$ be an algebraic function, single-valued and holomorphic on $\bC\setminus E$.
Assume
\begin{equation}\label{eq:herglotz-hyp}
 f(\bar z)=\overline{f(z)},\qquad
 \Im f(z)<0\quad(\Im z>0),\qquad
 f(z)=\frac{a_0}{z}+O(z^{-2}),\quad a_0>0.
\end{equation}
Then $f=\Cauchy_\mu$ on $\bC\setminus E$ for a unique positive measure $\mu$
of mass $a_0$.  The support of $\mu$ is a finite union of compact intervals and
points.  Hence $\mu$ is a positive mother-body measure and, if $P$ is the irreducible
defining polynomial of $f$, then $\delta_P(f;K)=0$ for every compact convex
$K\supset E$.

Conversely, the algebraic Cauchy transform of a nonzero positive measure on
$E$ satisfies \eqref{eq:herglotz-hyp}.
\end{theorem}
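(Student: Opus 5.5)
The plan is to use the Herglotz (Nevanlinna) representation of $-f$ on the upper half-plane, and then exploit algebraicity to pin down the structure of the representing measure.

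\emph{Representation.} Put $F=-f$. On $\{\Im z>0\}$ the function $F$ is holomorphic with $\Im F>0$, so it has a Nevanlinna representation
\[
F(z)=\alpha+\beta z+\int_{\bR}\Bigl(\frac{1}{t-z}-\frac{t}{1+t^2}\Bigr)d\nu(t),\qquad \beta\ge0,
\]
with $\nu\ge0$. The decay $f(z)=a_0/z+O(z^{-2})$ along $z=iy$ gives $yF(iy)\to -a_0$, hence $\beta=0$ and $\nu(\bR)=a_0$; the linear term and $\alpha$ then cancel against the normalization, so $f=\Cauchy_\nu$ on the upper half-plane. The symmetry $f(\bar z)=\overline{f(z)}$ extends this identity to the lower half-plane. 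Since $f$ is holomorphic on $\bC\setminus E$, the Stieltjes inversion formula ($\nu$ is the weak limit of $-\pi^{-1}\Im f(x+i\eps)\,dx$) shows $\nu(\bR\setminus E)=0$. Hence $\mu:=\nu$ is supported in $E$ and $f=\Cauchy_\mu$ on $\bC\setminus E$ by the identity theorem. Uniqueness follows because $\Cauchy_\mu$ determines $\mu$ via \eqref{eq:dbar-Cauchy}, or equivalently via the moments.

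\emph{Structure of the support.} This is the main obstacle, since it is where algebraicity enters. The boundary values $f(x\pm i0)$ exist at every $x\in E$ outside a finite set $B$, where $B$ consists of the branch points and poles of the algebraic function lying on $\bR$ together with the endpoints of $E$. Moreover, near each $x\in E\setminus B$ these boundary values are real-analytic, being analytic continuations of branches. The density of $\mu$ equals $-\pi^{-1}\Im f(x+i0)$ on $E\setminus B$. This density is real-analytic on each component interval of $E\setminus B$, so it either vanishes identically there or has only finitely many zeros, since the branches are algebraic. Near points of $B$ the Puiseux expansion shows that $\mu$ can carry at most an atom together with an integrable density; a non-simple pole would contradict $\Im f<0$. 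Consequently $\supp\mu$ is a finite union of compact intervals and points, and $\mu$ is absolutely continuous on the intervals apart from finitely many atoms.

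\emph{Mother body and zero defect.} The set $\bC\setminus\supp\mu$ is connected, and there $\Cauchy_\mu=f$ is a branch of $P(z,w)=0$. Thus $\mu$ is a positive mother-body measure. Since $A(\supp\mu)=0$, the equation $P(z,\Cauchy_\mu)=0$ holds almost everywhere. For $K\supset E$ we have $\mu\in\M_f(K)$, so $\delta_P(f;K)=0$ by Proposition~\ref{prop:motherbody-relation}(i) or Theorem~\ref{thm:main-defect}.

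\emph{Converse.} For a nonzero positive $\mu$ on $E\subset\bR$, realness of $\mu$ gives $\Cauchy_\mu(\bar z)=\overline{\Cauchy_\mu(z)}$. The identity $\Im\frac1{z-t}=-\Im z/|z-t|^2$ gives $\Im\Cauchy_\mu<0$ on the upper half-plane, and the Laurent expansion at infinity yields $a_0=\mu(\bR)>0$.
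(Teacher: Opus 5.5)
Your proposal is correct and follows essentially the same route as the paper. Both arguments use the Herglotz/Nevanlinna representation of $-f$ normalized by the expansion at infinity, Stieltjes inversion to put the support in $E$, algebraicity of $f_--f_+$ so that the density on each component of $E$ minus a finite exceptional set either vanishes identically or has finitely many zeros, atoms only at that finite set, and the direct computation of $\Im\Cauchy_\mu$ for the converse. There is one small slip: from $f(iy)=a_0/(iy)+O(y^{-2})$ one gets $iyF(iy)\to -a_0$, i.e.\ $y\Im F(iy)\to a_0$, not $yF(iy)\to-a_0$, though the conclusions $\beta=0$ and $\nu(\bR)=a_0$ are unaffected; also, the Puiseux discussion near $B$ is not needed, since $B$ is finite and so $\mu\lfloor B$ is automatically atomic.
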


\begin{proof}
The function $-f$ is a Herglotz function.  The Herglotz representation theorem
\cite[Chapter~III]{Donoghue}, together with the expansion at infinity, gives a
finite positive measure $\mu$ on $\bR$ such that
\[
 f(z)=\int_{\bR}\frac{d\mu(x)}{z-x},\qquad \mu(\bR)=a_0.
\]
Since $f$ extends holomorphically and is real on every interval contained in
$\bR\setminus E$, Stieltjes inversion gives $\supp\mu\subset E$.

It remains to determine the structure of the support.  Let $S\subset E$ consist
of the endpoints of the interval components together with the real branch
points and poles of $f$.  This set is finite.  On each component $I$ of
$E\setminus S$, the boundary values $f_\pm$ are analytic and
\begin{equation}\label{eq:herglotz-density}
 d\mu(x)=\frac{f_-(x)-f_+(x)}{2\pi i}\,dx
        =-\frac1\pi\Im f_+(x)\,dx.
\end{equation}
The difference $f_--f_+$ is algebraic.  After passing to the normalization of
the corresponding algebraic curve, it is meromorphic; hence it is either
identically zero or has only finitely many zeros.  Since the density in
\eqref{eq:herglotz-density} is nonnegative, the support in $I$ is therefore a
finite union of intervals.  The remaining part of the measure is supported on
$S$.  An atom at $x_0$ is equivalent to a simple pole of $f$, and its mass is
$\operatorname*{res}_{z=x_0}f(z)\ge0$.  Uniqueness follows from the Herglotz
representation.

Conversely, for $z=x+iy$, $y>0$,
\[
 \Im\Cauchy_\mu(z)=-y\int_E\frac{d\mu(t)}{|z-t|^2}<0,
\]
and the symmetry and expansion at infinity are immediate.
\end{proof}

\subsection{Fuss--Catalan and Raney families}

We use the notions of balanced and excellent balanced polynomials from
Section~\ref{sec:BoSh-background}.

\begin{theorem}[Fuss--Catalan family]\label{thm:fuss-catalan}
For every integer $m\ge2$, the polynomial
\begin{equation}\label{eq:fuss-polynomial}
 P_m(z,w)=z^{m-1}w^m-zw+1
\end{equation}
is irreducible and balanced but not excellent balanced.  Its branch
$f_m(z)=z^{-1}+O(z^{-2})$ is the Cauchy transform of the Fuss--Catalan
probability measure supported on
\begin{equation}\label{eq:fuss-support}
 [0,K_m],\qquad K_m=\frac{m^m}{(m-1)^{m-1}}.
\end{equation}
Consequently $f_m$ has a positive mother body and
$\delta_{P_m}(f_m;K)=0$ for every compact convex $K\supset[0,K_m]$.
\end{theorem}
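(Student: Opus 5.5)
The plan has four steps: verify the three algebraic claims about $P_m$ directly from its Newton support, identify $f_m$ through the Fuss--Catalan generating function, and then feed $f_m$ into Theorem~\ref{thm:herglotz}. The positivity step is the one I expect to be hardest.

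\emph{Algebraic claims.} The support is $S(P_m)=\{(m-1,m),(1,1),(0,0)\}$. The values of $k-j$ are $1,0,0$, so $m(P_m)=0$ and $P_m$ is balanced. A dominant diagonal monomial $z^nw^n$ in $S(P_m)$ would have to be $zw$ or $1$. Neither dominates $(m-1,m)$, since $m\ge2$. Hence $P_m$ is not excellent balanced.

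For irreducibility I will substitute $u=zw$. This gives
\[
zP_m(z,u/z)=u^m-zu+z=u^m+z(1-u).
\]
The right-hand side has degree one in $z$, and its coefficients $u^m$ and $1-u$ are coprime, so it is irreducible in $\bC[z,u]$. Now suppose $P_m=AB$ with $\deg_wA,\deg_wB\ge1$. Then $z^{a}A(z,u/z)$ and $z^{b}B(z,u/z)$, with $a=\deg_wA$ and $b=\deg_wB$, are polynomials in $z,u$, and each has positive $u$-degree. Their product is $z^{m}P_m(z,u/z)=z^{m-1}\bigl(u^m+z(1-u)\bigr)$. Removing powers of $z$, which are the only possible extra factors, contradicts the irreducibility above.

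\emph{Identification of $f_m$.} Let $B(x)=\sum_{n\ge0}\frac{1}{(m-1)n+1}\binom{mn}{n}x^n$ be the Fuss--Catalan generating function. It satisfies $B=1+xB^m$ by Lagrange inversion, and its radius of convergence is $1/K_m$. Put $w=z^{-1}B(1/z)$, so that $zw=B(1/z)$. The functional equation then becomes $zw=1+z^{m-1}w^m$, which is $P_m(z,w)=0$, and $w=z^{-1}+O(z^{-2})$. So $f_m$ is the moment generating function of the Fuss--Catalan numbers.

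\emph{Single-valuedness.} Next I locate the finite branch points. The equations $u^m-zu+z=0$ and $mu^{m-1}-z=0$ force $u=m/(m-1)$ and $z=K_m$. The only other finite exceptional point is $z=0$, the zero of $p_d=z^{m-1}$. Thus $\bC\setminus[0,K_m]$, together with $\infty$, is simply connected in the sphere and contains no branch points or poles other than possibly $0$. Analytic continuation of the germ at infinity is therefore single-valued and holomorphic there. The series also shows that $f_m$ is real on $\bR\setminus[0,K_m]$, and $f_m(\bar z)=\overline{f_m(z)}$.

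\emph{Positivity, the main obstacle.} It remains to verify $\Im f_m<0$ on the upper half-plane, or equivalently that the Fuss--Catalan numbers form a Hausdorff moment sequence on $[0,K_m]$. My first attempt is a boundary argument. The harmonic function $\Im f_m$ on the upper half-plane vanishes on $\bR\setminus[0,K_m]$, and near infinity it behaves like $-\Im z/|z|^2$. It then suffices to show $\Im f_{m,+}\le0$ on $(0,K_m)$, after which the maximum principle applies. On $(0,K_m)$ the value $u=zw$ solves $u^m-zu+z=0$, and $f_{m,+}$ is the branch continuing from the real branch at $z>K_m$. I would use the parametrization $z=u^m/(u-1)$ to follow how the non-real conjugate pair of roots emerges at $K_m$, and check the sign of its imaginary part. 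Should this case analysis become unwieldy, I would instead cite the known positivity of the Fuss--Catalan density on $[0,K_m]$ (Penson--\.Zyczkowski, M{\l}otkowski). Once \eqref{eq:herglotz-hyp} holds with $E=[0,K_m]$, Theorem~\ref{thm:herglotz} gives a unique positive probability measure, which is a mother body. It also gives $\delta_{P_m}(f_m;K)=0$ for every compact convex $K\supset[0,K_m]$. Matching moments identifies this measure with the Fuss--Catalan law.
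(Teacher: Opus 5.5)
Your algebraic checks and the identification $zf_m=B(1/z)$ agree with the paper. The paper sets $W=zf_m$, uses Lagrange inversion, and proves irreducibility by the same substitution $u=zw$, concluding via irreducibility over $\bC(z)$. In your factor-tracking version you should also rule out a factor lying in $\bC[z]$; this is immediate because the constant term of $P_m$ is $1$.

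The real difference is the positivity step. The paper verifies no Herglotz condition. It cites Forrester--Liu for the fact that the Fuss--Catalan numbers are the moments of a positive probability measure $\mu_m$ on $[0,K_m]$. Hence $\Cauchy_{\mu_m}=f_m$ near infinity, and since $\bC\setminus[0,K_m]$ is connected, $\Cauchy_{\mu_m}$ is the continuation of $f_m$ there. The mother body and the vanishing defect follow at once.

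You instead route through Theorem~\ref{thm:herglotz}, which costs you the single-valuedness and symmetry checks. If you use your fallback citation, that detour is redundant: the cited measure already is the mother body, and you are back at the paper's two-line proof. The Herglotz route pays off only if you actually prove $\Im f_m<0$. Then positivity of the Fuss--Catalan law is derived from the equation rather than quoted, and Theorem~\ref{thm:herglotz} supplies the support structure explicitly.

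As sketched, that self-contained argument has two holes.

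First, the analysis at $K_m$ gives the sign only near $K_m$. Put $g(u)=u^m/(u-1)$ and $u_0=m/(m-1)$; then $g'(u_0)=0$ and $g''(u_0)>0$. Your branch on $(K_m,\infty)$ is the root below $u_0$, because $B$ has positive coefficients and $B(1/K_m)=u_0$. Hence $x f_{m,+}(x)=u_0-ic\sqrt{K_m-x}+\dots$ with $c>0$. To extend the sign to all of $(0,K_m)$ you need one more observation. The roots of $u^m-xu+x$ are simple on $(0,K_m)$, so the continuous root selections $W_+$ and $W_-=\overline{W_+}$ either coincide identically or never. They differ near $K_m$, so $\Im W_+$ never vanishes on the interval and keeps its negative sign.

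Second, the maximum principle does not apply as stated, because $f_m$ is unbounded at $0$. As $z\to0$ every root satisfies $(zw)^m\sim -z$, so $|f_m(z)|=O(|z|^{-1+1/m})$. Apply the maximum principle to $\Im f_m-\varepsilon\Re\bigl((-iz)^{-\beta}\bigr)$ with $1-1/m<\beta<1$. This subtracted function is positive and harmonic on the upper half-plane, vanishes at infinity, and dominates $|z|^{-1+1/m}$ near $0$. Letting $\varepsilon\to0$ gives $\Im f_m\le0$, and strictness follows from the strong maximum principle.

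With these two additions your proof no longer depends on the literature for positivity.
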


\begin{proof}
Set $W=zf_m$.  Equation \eqref{eq:fuss-polynomial} becomes
\[
 W^m-zW+z=0,
 \qquad W=1+z^{-1}W^m.
\]
The branch $W=1+O(z^{-1})$ is unique, and Lagrange inversion gives
\[
 W(z)=\sum_{k\ge0}\frac{1}{(m-1)k+1}\binom{mk}{k}z^{-k}.
\]
These are the moments of the Fuss--Catalan probability measure on
\eqref{eq:fuss-support}; see \cite[(2.1), (2.7), and (2.10)]{ForresterLiu}.
Hence $f_m=\Cauchy_{\mu_m}$.

For irreducibility, write $F(z,u)=u^m-zu+z$.  As a polynomial of degree one in
$z$, it is irreducible in $\bC[u,z]$, since $u^m$ and $1-u$ are coprime.  The
substitution $u=zw$ sends $zP_m(z,w)$ to $F(z,u)$, so $P_m$ is irreducible over
$\bC(z)$ and therefore in $\bC[z,w]$.  Its exponent pairs are
$(m-1,m),(1,1),(0,0)$, whence $m(P_m)=0$.  The diagonal monomial $zw$ does not
dominate $z^{m-1}w^m$ coordinatewise, so $P_m$ is not excellent balanced.
\end{proof}

\begin{proposition}[Raney family]\label{prop:raney}
Let $p\ge2$ and $1\le r\le p$ be integers.  There exists a positive probability
measure $\mu_{p,r}$ on
$[0,K_p]$, $K_p=p^p(p-1)^{1-p}$, whose Cauchy transform $G_{p,r}$ is algebraic.
If $u=zG_{p,1}(z)$, then
\begin{equation}\label{eq:raney-relations}
 u^p-zu+z=0,\qquad u^r=zG_{p,r}(z).
\end{equation}
Thus $G_{p,r}$ is annihilated by
\begin{equation}\label{eq:raney-resultant}
 \mathcal R_{p,r}(z,w)=
 \operatorname{Res}_u\bigl(u^p-zu+z,\ u^r-zw\bigr),
\end{equation}
and the irreducible factor containing the probability branch admits the positive
mother body $\mu_{p,r}$.
\end{proposition}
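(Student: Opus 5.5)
The plan is to reduce everything to Theorem~\ref{thm:fuss-catalan} and to the real-line criterion, Theorem~\ref{thm:herglotz}.

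\emph{Step 1: the variable $u$.} Take $m=p$ in Theorem~\ref{thm:fuss-catalan}. It gives $G_{p,1}=f_p=\Cauchy_{\mu_p}$, where $\mu_p$ is the Fuss--Catalan probability measure on $[0,K_p]$. The substitution $W=zf_p$ made in that proof is exactly $u=zG_{p,1}$, so the first relation in \eqref{eq:raney-relations} holds. We also know that $u=1+O(z^{-1})$ near infinity and that $u$ is holomorphic on $\bC\setminus[0,K_p]$.

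\emph{Step 2: definition and algebraicity of $G_{p,r}$.} Define $G_{p,r}(z):=u(z)^r/z$ on $\bC\setminus[0,K_p]$. At $z=0$ this is not obviously holomorphic, but $0\in[0,K_p]$ lies in the excluded set, so no care is needed there. Near infinity, $G_{p,r}=z^{-1}+O(z^{-2})$. By construction $(u,G_{p,r})$ satisfies both equations in \eqref{eq:raney-relations}, so $u$ is a common root of $u^p-zu+z$ and $u^r-zG_{p,r}$. Hence the resultant $\mathcal R_{p,r}(z,G_{p,r}(z))$ vanishes identically, and $G_{p,r}$ is algebraic. Let $P$ be the irreducible factor of $\mathcal R_{p,r}$ that vanishes on the probability branch; it exists because $\bC[z,w]$ is a UFD.

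\emph{Step 3: the hypotheses of Theorem~\ref{thm:herglotz} with $E=[0,K_p]$.} Symmetry is immediate, since $u$ has real Laurent coefficients. The expansion holds with $a_0=1$. The real content, and the main obstacle, is the sign condition
\[
\Im G_{p,r}<0 \quad\text{on}\quad \Im z>0 .
\]
For $r=1$ it is already known. For general $r$ I would try two routes in turn.

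The first route uses the known Raney moment formula. By Lagrange inversion applied to $u=1+z^{-1}u^p$, the coefficients of $u^r$ are the Raney numbers
\[
\frac{r}{pk+r}\binom{pk+r}{k}.
\]
By Mlotkowski, and as recorded in \cite{ForresterLiu}, these are the moments of a positive probability measure $\mu_{p,r}$ on $[0,K_p]$ precisely when $p\ge1$ and $0\le r\le p$. Once these moments are known to come from a positive compactly supported measure on $[0,K_p]$, the Cauchy transform of $\mu_{p,r}$ equals $G_{p,r}$ near infinity by moment matching. It then equals $G_{p,r}$ on all of $\bC\setminus[0,K_p]$ by analytic continuation, since that set is connected.

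As a fallback I would give a direct argument via the boundary density:
\[
-\frac1\pi\Im\frac{u_+(x)^r}{x}.
\]
Parametrize the boundary values of $u$ on $(0,K_p)$ by the standard trigonometric parametrization of the Fuss--Catalan curve. Then check that $\arg u_+\in(0,\pi/p)$ forces $\arg u_+^r\in(0,\pi)$ exactly when $r\le p$. The Stieltjes inversion argument from the proof of Theorem~\ref{thm:herglotz} then gives positivity. This positivity step is where the restriction $r\le p$ enters, and it is where I expect the difficulty to lie.

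\emph{Step 4: conclusion.} Apply Theorem~\ref{thm:herglotz}. It yields that $\mu_{p,r}$ is a positive mother body, supported on a finite union of intervals in $[0,K_p]$, for the branch $G_{p,r}$ of the irreducible factor $P$, and hence that $\delta_P(G_{p,r};K)=0$ for every compact convex $K\supset[0,K_p]$.
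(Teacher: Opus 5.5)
This is essentially the paper's argument. Positivity and support of $\mu_{p,r}$ are taken from the Raney-distribution literature (Mlotkowski, Forrester--Liu), the relations follow from the Fuss--Catalan substitution, and $u$ is eliminated by the resultant; routing the mother-body conclusion through Theorem~\ref{thm:herglotz} is a harmless addition.

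Your fallback has a sign slip. Since $\Im G_{p,1}<0$ on $\Im z>0$, the upper boundary value satisfies $\arg u_+\in(-\pi/p,0)$, not $(0,\pi/p)$. It is $\arg u_+^r\in(-\pi,0)$ (for $r\le p$), not $(0,\pi)$, that makes the density $-\frac{1}{\pi x}\Im u_+(x)^r$ positive.
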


\begin{proof}
The moments of $\mu_{p,r}$ are the Raney numbers
\[
 R_{p,r}(k)=\frac{r}{pk+r}\binom{pk+r}{k}.
\]
The support, positivity of the density, and identities
\eqref{eq:raney-relations} are proved in
\cite[(2.1), (2.6), and Proposition~2.1]{ForresterLiu}.  Elimination of $u$
gives \eqref{eq:raney-resultant}.
\end{proof}

Theorem~\ref{thm:fuss-catalan} provides an infinite family covered by the
balanced-polynomial conjecture of~\cite{BoSh} but not by
Theorem~\ref{thm:BoSh-excellent}.

\subsection{Positive sums}

\begin{proposition}\label{prop:positive-sums}
Let $f_j=\Cauchy_{\mu_j}$, $j=1,2$, be algebraic Cauchy transforms of positive
mother-body measures, and let $P_j$ be irreducible defining polynomials of
$w$-degrees $d_j$.  For $\alpha,\beta>0$,
\[
 f=\alpha f_1+\beta f_2
\]
is algebraic and is represented by the positive mother-body measure
$\alpha\mu_1+\beta\mu_2$.  An annihilating polynomial is
\begin{equation}\label{eq:sum-resultant}
 \operatorname{Res}_u\left(
 \alpha^{d_1}P_1\left(z,\frac{u}{\alpha}\right),
 \beta^{d_2}P_2\left(z,\frac{w-u}{\beta}\right)
 \right).
\end{equation}
\end{proposition}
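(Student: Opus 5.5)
The proof has three parts: linearity of the Cauchy transform gives the representation, a resultant computation gives the annihilating polynomial, and a short local argument checks the mother-body conditions off the union of the supports.

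\emph{Representation.} Put $\mu=\alpha\mu_1+\beta\mu_2$. This is a positive compactly supported measure. By linearity of $\nu\mapsto\Cauchy_\nu$ we have $\Cauchy_\mu=\alpha f_1+\beta f_2=f$ almost everywhere, and in particular near infinity. Hence $f$ is positive, with $a_0(f)=\alpha a_0(f_1)+\beta a_0(f_2)>0$.

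\emph{Annihilating polynomial.} Set $\tilde P_1(z,u)=\alpha^{d_1}P_1(z,u/\alpha)$ and $\tilde P_2(z,w,u)=\beta^{d_2}P_2(z,(w-u)/\beta)$; both are polynomials in $(z,u)$ and $(z,w,u)$.
\begin{itemize}
\item Near infinity, $u=\alpha f_1(z)$ is a common root in $u$ of $\tilde P_1(z,\cdot)$ and of $\tilde P_2(z,f(z),\cdot)$, since $(f-\alpha f_1)/\beta=f_2$.
\item For generic $z$ the leading coefficients in $u$ are $p^{(1)}_{d_1}(z)$ and $\pm p^{(2)}_{d_2}(z)$. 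These do not vanish, so the resultant specializes correctly.
\item By the standard property of the resultant, the polynomial \eqref{eq:sum-resultant} therefore vanishes at $(z,f(z))$ for all large $|z|$.
\item It is not identically zero: its roots in $w$ for fixed generic $z$ are the sums $\alpha w_1+\beta w_2$ of roots of $P_1$ and $P_2$, so it has $w$-degree $d_1d_2$ with leading coefficient a power product of the $p^{(j)}_{d_j}$.
\end{itemize}
Thus $f$ is algebraic. Its irreducible polynomial $P$ is the factor of \eqref{eq:sum-resultant} that annihilates the germ.

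\emph{Mother-body structure.} The support of $\mu$ is contained in $\supp\mu_1\cup\supp\mu_2$, a finite union of compact semi-analytic arcs and points. Since $\mu_1$ and $\mu_2$ are positive and $\alpha,\beta>0$, there is no cancellation, so in fact $\supp\mu=\supp\mu_1\cup\supp\mu_2$.

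Now take a component $U$ of $\bC\setminus\supp\mu$. It lies in a component of $\bC\setminus\supp\mu_j$ for each $j$, so on $U$ the function $f_j$ is a branch of $P_j=0$ and $f=\alpha f_1+\beta f_2$ is analytic there. The resultant identity holds pointwise on $U$, so $f$ is locally a root of \eqref{eq:sum-resultant} on $U$.

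\emph{Main obstacle.} It remains to see that $f|_U$ is a branch of the specific irreducible factor $P$, not merely of the product \eqref{eq:sum-resultant}. I would argue by analytic continuation inside the bounded components:
\begin{enumerate}
\item On $U$, $f$ satisfies some irreducible factor $F$ of \eqref{eq:sum-resultant}, since $U$ is connected and the factors are finitely many.
\item The wording of the definition may allow branches of any factor of an annihilating polynomial. If the paper intends the irreducible $P$ of the germ, the cleanest route uses the planar-null support: $\mu$ satisfies the product equation almost everywhere, so $\mathfrak D$ computed with the product polynomial vanishes.
\item To pass to $P$ itself, I would note that $\mu_j$ is a mother body for $P_j$ with zero defect. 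I would then continue $f$ along paths from infinity that cross the supports, using that the jumps of $f$ across the arcs are combinations of jumps of $f_1$ and $f_2$, which are themselves branch changes.
\end{enumerate}
I expect this last step, keeping $f$ within branches of the single irreducible $P$, to be the only delicate point; the other steps are routine.
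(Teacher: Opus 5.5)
Your linearity, resultant and support steps are correct, and they are exactly the paper's proof. The paper stops there: it asserts that the irreducible factor of \eqref{eq:sum-resultant} containing $f$ is a defining polynomial and that the support is a finite union of arcs and points. It never checks the branch condition on the bounded components of $\bC\setminus\supp\mu$. You rightly isolate that condition, but the step you leave open is a genuine gap, and your step~3 cannot fill it. Crossing an arc of $\supp\mu_2$ switches $\Cauchy_{\mu_2}$ to another branch of $P_2$, while $\Cauchy_{\mu_1}$ keeps its branch. So on a bounded component each $\Cauchy_{\mu_j}$ is a continuation of $f_j$, but along paths that need not agree. The pair therefore need not be a simultaneous conjugate of $(f_1,f_2)$ over $\bC(z)$, and then $\alpha\Cauchy_{\mu_1}+\beta\Cauchy_{\mu_2}$ is a root of a different irreducible factor of the resultant. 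Your step~2 only yields zero defect for the reducible resultant, not for $P$.

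In fact the assertion fails in general. Let $\omega$ be the arcsine measure on $[-1,1]$, so $\Cauchy_\omega=1/s$ with $s=\sqrt{z^2-1}\sim z$ cut along $[-1,1]$. The map $T(t)=(2it+1)/(t+2i)$ fixes $\pm1$ and sends $[-1,1]$ onto the circular arc $\gamma$ through $-i/2$. Then $\nu=T_*\omega$ is a positive mother body on $\gamma$. Write $\frac1{z-T(t)}=\frac1{z-2i}\bigl(1+\frac{\tau+2i}{t-\tau}\bigr)$ with $\tau=T^{-1}(z)$, integrate, and use $\tau+2i=5/(z-2i)$ and $\tau^2-1=-5(z^2-1)/(z-2i)^2$. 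This gives $\Cauchy_\nu=\frac1{z-2i}\bigl(1-\frac{i\sqrt5}{\tilde s}\bigr)$, where $\tilde s$ is the branch of $\sqrt{z^2-1}$ on $\bC\setminus\gamma$ with $\tilde s\sim z$. On the lens $V$ bounded by $[-1,1]\cup\gamma$ one has $\tilde s=-s$, so
\[
 \Cauchy_{\alpha\omega+\beta\nu}=\frac{\beta}{z-2i}+\frac1s\Bigl(\alpha+\frac{i\sqrt5\,\beta}{z-2i}\Bigr)\ \text{on }V,
 \qquad
 f=\frac{\beta}{z-2i}+\frac1s\Bigl(\alpha-\frac{i\sqrt5\,\beta}{z-2i}\Bigr)\ \text{near }\infty .
\]
Here $f$ has degree $2$ over $\bC(z)$, and its two conjugates differ only in the sign of the $1/s$ term; the value on $V$ is neither of them. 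Explicitly, on $V$ one has $P(z,\Cauchy_{\alpha\omega+\beta\nu})=4i\sqrt5\,\alpha\beta\,p_2(z)/\bigl((z-2i)(z^2-1)\bigr)\not\equiv0$. Hence $\alpha\omega+\beta\nu$ is not a mother body for $f$ in the paper's sense, and $\mathfrak D_{P,K}(\alpha\omega+\beta\nu)>0$.

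What your argument does prove is that $\Cauchy_\mu$ is a branch of the resultant \eqref{eq:sum-resultant} in every complementary component. Reaching the irreducible $P$ requires an extra hypothesis. Two sufficient ones are that $\bC\setminus(\supp\mu_1\cup\supp\mu_2)$ is connected, or that every sum $\alpha w_1+\beta w_2$ of roots is a conjugate of $f$. The latter holds, for example, when the splitting fields are linearly disjoint, as in Example~\ref{ex:two-arcsine}.
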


\begin{proof}
The equality
$\Cauchy_{\alpha\mu_1+\beta\mu_2}=\alpha f_1+\beta f_2$ holds off the union of
the supports.  Formula \eqref{eq:sum-resultant} follows by eliminating
$u=\alpha f_1$; its irreducible factor containing $f$ is a defining polynomial.
The support remains a finite union of analytic arcs and points.
\end{proof}

\begin{example}[A quartic supported on two intervals]\label{ex:two-arcsine}
Let $a<b<c<d$ be real and put
\[
 A(z)=(z-a)(z-b),\qquad B(z)=(z-c)(z-d).
\]
For $\alpha,\beta>0$, the measure
$\alpha\omega_{[a,b]}+\beta\omega_{[c,d]}$, where $\omega_I$ denotes normalized
arcsine measure on $I$, has Cauchy transform
\[
 f(z)=\frac{\alpha}{\sqrt{A(z)}}+\frac{\beta}{\sqrt{B(z)}}
\]
and satisfies
\begin{equation}\label{eq:two-arcsine-quartic}
 \left( A(z)B(z)w^2-\alpha^2B(z)-\beta^2A(z)\right)^2
 -4\alpha^2\beta^2A(z)B(z)=0.
\end{equation}
The polynomial is irreducible.  Indeed,
$\bC(z)(\sqrt A,\sqrt B)/\bC(z)$ is biquadratic and the four conjugates
$\pm\alpha/\sqrt A\pm\beta/\sqrt B$ are distinct.  The primitive quartic in
\eqref{eq:two-arcsine-quartic} is therefore the minimal polynomial of $f$.
\end{example}

\subsection{Polynomial pushforwards}

\begin{proposition}\label{prop:pushforward}
Let $\mu$ be a positive mother-body measure whose Cauchy transform is algebraic,
and let $\phi\in\bC[t]$ be nonconstant.  Then $\nu=\phi_*\mu$ is a positive
mother-body measure and $\Cauchy_\nu$ is algebraic.  If $z$ is a regular value of
$\phi$ outside $\supp\nu$, then
\begin{equation}\label{eq:pushforward-trace}
 \Cauchy_\nu(z)=
 \sum_{\phi(\xi)=z}\frac{\Cauchy_\mu(\xi)}{\phi'(\xi)}.
\end{equation}
Near infinity the right-hand side is the field trace of
$\Cauchy_\mu(\xi)/\phi'(\xi)$ over $\bC(z)$.
\end{proposition}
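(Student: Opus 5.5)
The proof starts from the definition of pushforward, $\int g\,d\nu=\int g\circ\phi\,d\mu$. With the kernel $g(\zeta)=1/(z-\zeta)$ this gives
\[
 \Cauchy_\nu(z)=\int\frac{d\mu(t)}{z-\phi(t)}\qquad (z\notin\supp\nu=\phi(\supp\mu)).
\]
For a regular value $z$ of $\phi$, the roots $\xi_1,\dots,\xi_n$ of $\phi(\xi)=z$ are simple, $n=\deg\phi$. The partial fraction expansion
\[
 \frac1{z-\phi(t)}=\sum_{k}\frac{-1}{\phi'(\xi_k)(t-\xi_k)}
\]
follows from the residues of $1/(z-\phi(t))$ in $t$. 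Integrating it against $\mu$ yields
\[
 \Cauchy_\nu(z)=\sum_k\frac{1}{\phi'(\xi_k)}\int\frac{d\mu(t)}{\xi_k-t}=\sum_k\frac{\Cauchy_\mu(\xi_k)}{\phi'(\xi_k)},
\]
valid because each $\xi_k\notin\supp\mu$, since $\phi(\xi_k)=z\notin\phi(\supp\mu)$. This is \eqref{eq:pushforward-trace}. Near infinity the $\xi_k$ are the $n$ conjugate Puiseux branches of $\phi^{-1}(z)$, and $\Cauchy_\mu(\xi)$ there is the germ of the fixed branch at infinity. Hence the right-hand side is the trace from $\bC(z)(\xi,\Cauchy_\mu(\xi))$ (restricted to the branch) down to $\bC(z)$; more precisely, it is a symmetric function of the conjugates of $\Cauchy_\mu(\xi)/\phi'(\xi)$, which lies in a finite extension of $\bC(\xi)$ and therefore of $\bC(z)$.

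Algebraicity of $\Cauchy_\nu$ then follows. The element $h(\xi)=\Cauchy_\mu(\xi)/\phi'(\xi)$ is algebraic over $\bC(\xi)$, and $\xi$ is algebraic over $\bC(z)$, so $h$ is algebraic over $\bC(z)$. Sums of algebraic functions are algebraic, so $\Cauchy_\nu$ is algebraic near infinity. An explicit annihilator can be produced by resultants: eliminate $\xi$ from $P(\xi,u)=0$ and $\phi(\xi)=z$, then symmetrize. This is not needed.

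The mother-body property is the main obstacle. Positivity and mass are preserved, and $\supp\nu=\phi(\supp\mu)$ is a finite union of semi-analytic arcs and points, as the image of such a set under a polynomial map; analytic arcs map to analytic arcs except at finitely many critical points. It remains to show that in each component $U$ of $\bC\setminus\supp\nu$, $\Cauchy_\nu$ is a branch of one fixed irreducible polynomial $P_\nu$.

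The plan for this step is as follows. On $U$ minus the finite set of critical values, the local inverse branches $\xi_k(z)$ stay in $\bC\setminus\supp\mu$. On each component of the complement of $\supp\mu$, $\Cauchy_\mu$ is a branch of $P(\xi,w)=0$, i.e.\ a point of the algebraic curve $\{P=0\}$. Therefore $\Cauchy_\nu$ is locally a symmetric sum over points lying on the curve $\{P(\xi,w)=0,\ \phi(\xi)=z\}$. Such a sum is a root of the resultant-type polynomial annihilating all sums of $n$ choices of branches, which is a single polynomial in $\bC[z,w]$. Analytic continuation across the critical values, which are removable since $\Cauchy_\nu$ is bounded there off the support, shows that the relation holds on all of $U$.

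Finally, one must pass from this (possibly reducible) annihilator to the irreducible factor singled out by the germ at infinity. I would do this by observing that $P_\nu(z,\Cauchy_\nu(z))$ vanishes almost everywhere. The supports are null, $\Cauchy_\nu$ is $L^1_{\mathrm{loc}}$, and on each component $\Cauchy_\nu$ is holomorphic, so it satisfies some irreducible factor. One then needs that it is the same factor in every component, which is the delicate point. I would attempt to settle it by invoking Theorem~\ref{thm:external-support} applied to $\nu$ with the reducible annihilator, or by accepting the paper's convention that mother-body measures may follow branches of the annihilating polynomial.
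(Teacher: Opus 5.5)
Up to the choice of irreducible factor, your argument is the paper's: the same partial-fraction identity, integration against $\mu$, and a common resultant-type annihilator of all sums of local branches. The point you call delicate is, however, a genuine gap, and neither of your ways around it works. Theorem~\ref{thm:external-support} presupposes an \emph{irreducible} $P$ with $P(z,\Cauchy_\nu)=0$ almost everywhere, which is exactly what is missing, and it only describes the support. ``Accepting the convention'' changes the definition, which demands a branch of the irreducible polynomial $P_\nu$ of the germ in \emph{every} component. The paper's proof stops at the same place (``annihilated by a common polynomial''), which yields only a possibly reducible annihilator. Relatedly, the right-hand side of \eqref{eq:pushforward-trace} is a trace only for the local extension $\bC((\xi^{-1}))/\bC((z^{-1}))$. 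The global trace from $\bC(\xi,\Cauchy_\mu)$ sums over all branches of $P$ and vanishes in the example below.

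In fact the gap cannot be closed. Let $\mu$ be the arcsine measure on $[-1,1]$, so $\Cauchy_\mu(\xi)=(\xi^2-1)^{-1/2}$ with the principal root (cut $[-1,1]$, $\sim\xi$ at $\infty$), and let $\phi(t)=(t^2-1)(1+it)$. For real $t$, $\Re\phi=t^2-1$ and $\Im\phi=t(t^2-1)$. Hence $J=\phi([-1,1])$ is a Jordan curve through $0=\phi(\pm1)$ bounding a domain $U$, and $\phi'\neq0$ on $\bR$. If $\phi(\xi_k)=z$ and $r_k^2=\xi_k^2-1$, then $\prod_k(\xi_k^2-1)=(\phi(1)-z)(\phi(-1)-z)/i^2=-z^2$, so $r_1r_2r_3=\pm iz$, and analytic continuation preserves this sign. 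Every root of $P_\nu$ is a continuation of the exterior germ, so it has the form $\sum_k(\phi'(\xi_k)r_k)^{-1}$ with $r_1r_2r_3=-iz$; this is the value for the exterior germ, since there $\prod_k r_k\sim\prod_k\xi_k\sim-iz$. Take a root $S$ and a sum $S'$ with $r_1r_2r_3=+iz$. Then $S'$ either differs from $S$ by one term $\pm2(\phi'(\xi_k)r_k)^{-1}\neq0$, or $S'=-S\neq S$; so $S'$ is never a root. With principal roots, however, the product equals $+iz$ on $U$, because crossing $J\setminus\{0\}$ moves exactly one $\xi_k$ across the cut. Thus $P_\nu(z,\Cauchy_\nu(z))\neq0$ on $U$ off a discrete set, and $\Cauchy_\nu$ satisfies no irreducible equation almost everywhere: $\nu$ is not a mother body for its own germ. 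A correct statement needs either a weaker conclusion (a possibly reducible annihilator) or an extra hypothesis such as connectedness of $\bC\setminus\supp\nu$, as in Example~\ref{ex:parabolic-pushforward}.
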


\begin{proof}
For a regular value $z$,
\[
 \frac1{z-\phi(t)}=
 \sum_{\phi(\xi)=z}\frac1{\phi'(\xi)(\xi-t)}.
\]
Integration gives \eqref{eq:pushforward-trace}.  For $|z|$ large all roots $\xi$ lie in the exterior component of
$\bC\setminus\supp\mu$, and the sum is a field trace in a finite algebraic
extension of $\bC(z)$.  On every component of $\bC\setminus\supp\nu$, formula
\eqref{eq:pushforward-trace} is a sum of local algebraic branches.  These sums
belong to a finite algebraic extension of $\bC(z)$ and are annihilated by a
common polynomial.  Thus $\Cauchy_\nu$ is algebraic almost everywhere.  A compact
one-dimensional real-analytic image admits a finite real-analytic
stratification into arcs and points.  Hence $\phi_*\mu$ has mother-body
support.
\end{proof}

\begin{example}[A quartic supported on a parabolic arc]
\label{ex:parabolic-pushforward}
Let
\[
 d\omega(t)=\frac{dt}{\pi\sqrt{1-t^2}},\qquad -1<t<1,
\]
and $\phi(t)=t^2+it$.  The measure $\nu=\phi_*\omega$ is supported on
$\{t^2+it:-1\le t\le1\}$.  Put $D(z)=z^2-2z+2$.  Its Cauchy transform $G$ is the
probability branch of
\begin{equation}\label{eq:parabolic-quartic}
 D(z)^2(4z-1)G^4-2D(z)(2z-3)G^2-1=0.
\end{equation}

Indeed, if $s^2=4z-1$, $\xi_\pm=(-i\pm s)/2$, and
$g(\xi)=(\xi^2-1)^{-1/2}$ is normalized at infinity, then
$G=(g(\xi_+)-g(\xi_-))/s$.  With $u_\pm=\xi_\pm^2-1$ one has
\[
 u_++u_-=2z-3,
 \qquad u_+u_-=D(z),
\]
which gives \eqref{eq:parabolic-quartic}.  As a quadratic in $G^2$, its
discriminant is $16D(z)^3$.  Over $\bC(z)(\sqrt D)$ the norm of $G^2$ is
$-1/(D(z)^2(4z-1))$, which is not a square in $\bC(z)$.  Thus the degree of $G$
over $\bC(z)$ is four, and \eqref{eq:parabolic-quartic} is irreducible.
\end{example}

\subsection{Branch graphs}

\begin{theorem}[Positive branch-graph criterion]\label{thm:branch-graph}
Let $\Gamma$ be a finite compact embedded real-analytic graph, let
$U_0,\ldots,U_N$ be the components of $\bC\setminus\Gamma$, with $U_0$
unbounded, and let $S\subset\bC\setminus\Gamma$ be the finite set of poles of
the piecewise function below.  On $U_j$ choose a single-valued meromorphic
branch $F_j$ of an irreducible equation
$P(z,w)=0$.  Assume:
\begin{enumerate}[label=\textup{(\roman*)}]
\item $F_0(z)=a_0/z+O(z^{-2})$, $a_0>0$;
\item the poles are simple, belong to $S$, and have nonnegative real residues;
\item the branches have continuous boundary values on open edges and are
locally bounded at the vertices;
\item every edge can be oriented so that
\begin{equation}\label{eq:positive-edge-jump}
 d\mu_e(z)=\frac{F_-(z)-F_+(z)}{2\pi i}\,dz
\end{equation}
is nonnegative, where $F_+$ and $F_-$ are the boundary values from the left and
right.
\end{enumerate}
Then
\[
 \mu=\sum_e\mu_e+
 \sum_{a\in S}\operatorname*{res}_{z=a}F(z)\,\delta_a
\]
is a positive mother-body measure of mass $a_0$, and the piecewise function
$F|_{U_j}=F_j$ equals $\Cauchy_\mu$ almost everywhere.  Conversely, every
positive mother body with this graph structure and boundary regularity gives
such data.
\end{theorem}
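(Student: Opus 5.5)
The plan is to compare the piecewise function $F$ with $\Cauchy_\mu$ through their distributional $\bar\partial$-derivatives, and then use uniqueness of the $L^1_{\mathrm{loc}}$ solution of \eqref{eq:dbar-Cauchy} that vanishes at infinity.

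First I check that $F\in L^1_{\mathrm{loc}}(\bC)$. By (ii) the poles are simple, so $F$ is locally integrable near $S$. By (iii) it is bounded near the graph away from the vertices, and locally bounded at the vertices. Since $\Gamma$ is a finite graph, it is planar-null.

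Next I compute $\partial_{\bar z}F$ against a test function $\varphi\in C_c^\infty(\bC)$. I excise small disks around the points of $S$ and around the vertices, apply Green's formula $\int_U F\,\partial_{\bar z}\varphi\,dA=-\frac{1}{2i}\oint_{\partial U}F\varphi\,dz$ on each $U_j$, and let the excision radii tend to zero. Three kinds of boundary terms appear.
\begin{enumerate}[label=\textup{(\alph*)}]
\item Each pole $a$ contributes $\pi\operatorname{res}_aF\,\varphi(a)$, as in the computation for $1/(z-a)$.
\item Each vertex contributes nothing in the limit, because $F$ is locally bounded there and the circle length tends to $0$.
\item Each open edge $e$ is traversed twice, once from each side. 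With the chosen orientation the two traversals combine to $\frac{1}{2i}\int_e (F_--F_+)\varphi\,dz=\pi\int_e\varphi\,d\mu_e$.
\end{enumerate}
The sign conventions have to be tracked so that the edge term comes out with the factor $\pi$ matching \eqref{eq:dbar-Cauchy}; continuity of the boundary values (iii) justifies passing to the limit along the edges. The result is $\partial_{\bar z}F=\pi\mu$ in $\mathcal D'$. Here $\mu$ is a positive measure by (ii) and (iv), and it is finite because the boundary values are bounded on compact subsets of the open edges. Near the vertices this uses local boundedness together with finite edge length.

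Now $F-\Cauchy_\mu$ is an $L^1_{\mathrm{loc}}$ distribution with zero $\bar\partial$-derivative, hence entire by Weyl's lemma. It tends to $0$ at infinity by (i), so it vanishes identically by Liouville. Hence $F=\Cauchy_\mu$ almost everywhere.

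Comparing the leading terms at infinity gives $\mu(\bC)=a_0$. The support of $\mu$ lies in $\Gamma\cup S$, which is a finite union of analytic arcs and points. On each $U_j$, $\Cauchy_\mu=F_j$ is a branch of $P$, so $\mu$ is a positive mother body.

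For the converse, take a positive mother body with this graph structure and boundary regularity. Its Cauchy transform restricts to branches $F_j$ on the components $U_j$, and (i) follows from the expansion at infinity. Applying Sokhotski--Plemelj locally on each edge, the same $\bar\partial$ computation read backwards identifies the edge measure with the jump \eqref{eq:positive-edge-jump}, and positivity of the measure gives (iv). Atoms off $\Gamma$ correspond exactly to simple poles with positive residues, giving (ii).

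The step I expect to be the main obstacle is the vertex analysis. Without uniform boundedness the circle integrals at the vertices need not vanish, and the edge masses near the vertices could fail to be finite. I would handle this using (iii) together with the fact that finitely many analytic edges meet at each vertex, so that a small circle around a vertex meets each incident edge a bounded number of times.
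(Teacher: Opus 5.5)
Your proposal is correct and follows the same route as the paper. You compute $\partial_{\bar z}F=\pi\mu$ distributionally from the edge jumps and pole residues, with local boundedness killing the vertex contributions; you conclude that $F-\Cauchy_\mu$ is entire and vanishes at infinity; you read off the mass from the $z^{-1}$ coefficient; and you invoke Plemelj for the converse. Your excision argument supplies details the paper leaves implicit.

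One small slip: the Green formula should read $\int_U F\,\partial_{\bar z}\varphi\,dA=\frac{1}{2i}\oint_{\partial U}F\varphi\,dz$, with the minus sign entering only through $\langle\partial_{\bar z}F,\varphi\rangle=-\int F\,\partial_{\bar z}\varphi\,dA$. This is the convention your items (a) and (c) actually use.
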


\begin{proof}
The edge measures are finite by the boundary regularity.  The distributional
jump formula, together with the pole contributions, gives
$\partial_{\bar z}F=\pi\mu$.  Local boundedness at the vertices excludes
additional point masses.  Thus $F-\Cauchy_\mu$ is distributionally holomorphic
and belongs to $L^1_{\rm loc}(\bC)$; it is therefore entire.  It tends to zero at
infinity and hence vanishes.  Comparison of the coefficients of $z^{-1}$ gives
$\mu(\bC)=a_0$.  The converse is the Plemelj formula.
\end{proof}

For a cyclic equation $A(z)w^m+D(z)=0$, write $h^m=-D/A$ and
$F_k=\zeta_m^kh$.  Condition \eqref{eq:positive-edge-jump} requires
$h(z)\,dz$ to have a prescribed constant argument on every edge.  Thus the
edges are trajectories, with phases determined by adjacent labels, of the
meromorphic $m$-differential
\[
 -\frac{D(z)}{A(z)}(dz)^m.
\]
In addition one needs a compatible $\mathbb Z_m$-labelling of the complementary
components and nonnegative residues.  For $m=2$ this is the quadratic
trajectory condition of~\cite{BoSh}.

\section{Quadratic equations}\label{sec:quadratic}

Let
\[
 P(z,w)=A(z)w^2+B(z)w+D(z),
 \qquad \Delta=B^2-4AD,
\]
and let $F_\pm$ be the two branches.  Along a smooth arc on which the branch
changes, the Plemelj formula gives
\[
 d\mu(z)=\frac{F_-(z)-F_+(z)}{2\pi i}\,dz.
\]
Hence every such arc is a horizontal trajectory of
\begin{equation}\label{eq:branch-difference-QD}
 \Theta=-\bigl(F_+-F_-\bigr)^2dz^2
       =-\frac{\Delta(z)}{A(z)^2}\,dz^2;
\end{equation}
see \cite[Proposition~9]{BoSh}.

The following is the positive version of \cite[Theorem~12]{BoSh}.

\begin{theorem}\label{thm:quadratic-finite}
Assume
\[
 \deg A=n+2,\qquad \deg B\le n+1,\qquad \deg D\le n,
\]
that $A$ and $B$ are coprime, $\deg\Delta=2n+2$, and the positive branch at
infinity is $a_0/z+O(z^{-2})$.  Suppose that the graph of finite critical
trajectories of \eqref{eq:branch-difference-QD} contains every zero of $\Delta$
and that $\Theta$ has no closed horizontal trajectories.  Then the positive
mother-body measures are obtained from the finitely many spanning
multisubgraphs without isolated vertices for which the associated section has
nonnegative edge jumps and only simple poles with nonnegative residues.

Every such measure supported in $K$ has zero defect.  Conversely, every
zero-defect measure in $\M_f(K)$ with planar-null support occurs in this list.
\end{theorem}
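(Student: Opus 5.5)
The proof has three parts. First, the description of positive mother bodies is \cite[Theorem~12]{BoSh} with positivity imposed. Second, the zero-defect claim follows from Proposition~\ref{prop:motherbody-relation}(i). Third, the converse uses Theorem~\ref{thm:external-support} together with the branch-graph criterion, Theorem~\ref{thm:branch-graph}. The step I expect to be the main obstacle is identifying the support of an arbitrary planar-null zero-defect measure with a subgraph of the critical graph.

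For the first part, let $\mu$ be a positive mother body. Off $\supp\mu$, the function $\Cauchy_\mu$ equals $F_+$ or $F_-$ on each complementary component. Along every arc where the branch switches, the Plemelj formula and \eqref{eq:branch-difference-QD} force the arc to be a horizontal trajectory of $\Theta$. An arc ending at a point that is neither a zero of $\Delta$ nor a pole would give a branch mismatch around a regular endpoint, which is impossible. So every arc is a finite critical trajectory.

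The degree hypotheses control the local picture. Since $\deg\Delta=2n+2$ and $\deg A^2=2n+4$, $\Theta$ has a double pole at infinity with real negative leading coefficient, given by the residue $a_0$. Since $A$ and $B$ are coprime, the finite poles of $\Theta$ are at most double and correspond to the simple poles of a branch.

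Together with the absence of closed trajectories, this makes the critical graph finite. It also means the support is a union of edges of this graph, i.e.\ a multisubgraph. Because the branch must switch at each simple zero of $\Delta$, every zero of $\Delta$ is a vertex of the support, which gives the spanning condition; isolated vertices would carry no mass. Conversely, any such subgraph whose section has nonnegative jumps and nonnegative residues yields a positive measure by Theorem~\ref{thm:branch-graph}. Finiteness of the list holds because the critical graph has finitely many edges.

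Zero defect for any such measure supported in $K$ is then immediate from Proposition~\ref{prop:motherbody-relation}(i).

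For the converse, let $\mu\in\mathcal Z_{P,f}(K)$ with $A(\supp\mu)=0$. By Proposition~\ref{prop:motherbody-relation}(ii), $\mu$ is a positive mother body: its support is a finite union of semi-analytic arcs and points, and $\Cauchy_\mu$ is a branch of $P$ on each complementary component. Then the first part applies.

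The obstacle is the boundary regularity needed to apply the Plemelj formula arc by arc. I would first use the local theory of trajectories near simple zeros and poles of the quadratic differential. Corollary~\ref{cor:linear-growth} gives bounded density away from $\Pi_P$, which excludes atoms there, and it also gives local boundedness of the branches away from zeros of $A$. At zeros of $A$, the masses are residues of the simple branch poles. This identifies $\mu$ with one of the listed measures.
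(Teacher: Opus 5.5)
Your skeleton is exactly the paper's proof: \cite[Theorem~12]{BoSh} with positivity imposed gives the list, Proposition~\ref{prop:motherbody-relation}(i) gives zero defect, and Proposition~\ref{prop:motherbody-relation}(ii) feeds a planar-null zero-defect measure back into that list. The supplementary reconstruction is not needed and is partly inaccurate: at a zero $z_0$ of $A$ of multiplicity $k$, coprimality only gives $\Delta(z_0)=B(z_0)^2\neq0$, so $\Theta$ has a pole of order $2k$ and one branch a pole of order $k$ (which is why simplicity of poles is imposed as a condition on the list); the leading coefficient of $\Theta$ at $\infty$ is $-(a_+-a_-)^2$, with $a_+=a_0$ and $a_-$ the $z^{-1}$-coefficient of the other branch, so it is not determined by $a_0$; and your boundary-regularity ``obstacle'' is moot, because Proposition~\ref{prop:motherbody-relation}(ii) already makes $\mu$ a mother body in the sense of \cite{BoSh}, to which Theorem~12 there applies directly.
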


\begin{proof}
The classification of real mother-body measures by spanning multisubgraphs is
\cite[Theorem~12]{BoSh}.  Positivity is equivalent to nonnegativity of the jump
measures and pole residues.  Proposition~\ref{prop:motherbody-relation} gives
the assertions concerning zero defect.
\end{proof}

For vanishing middle coefficient there is a topological criterion.  Assume
\[
 A(z)w^2+D(z)=0,
\]
where $A,D$ are coprime, $A$ is monic of degree $n+2$, and $D$ has degree $n$
and negative leading coefficient.  Put
$\Psi=D(z)A(z)^{-1}dz^2$.  If $\Psi$ is Strebel, let $K_\Psi$ be its critical
graph and let $F_\Psi$ be the forest obtained by deleting all simple cycles.

\begin{theorem}\label{thm:strebel-zero}
Assume $F_\Psi\subset K$.  The following are equivalent:
\begin{enumerate}[label=\textup{(\roman*)}]
\item no edge of $K_\Psi$ is attached to a simple cycle from its interior;
\item the positive branch admits a positive mother-body measure supported on
$F_\Psi$.
\end{enumerate}
If these conditions hold, then $\delta_P(f;K)=0$.  Conversely, if a zero-defect
measure in $\M_f(K)$ has planar-null support, then \textup{(i)} and
\textup{(ii)} hold.
\end{theorem}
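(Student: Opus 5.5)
The plan is to reduce the statement to the quadratic-differential classification already recorded. For the equivalence of (i) and (ii) I would invoke the corresponding topological criterion of \cite{BoSh} for the symmetric equation $A(z)w^2+D(z)=0$. For the zero-defect assertions I would invoke Proposition~\ref{prop:motherbody-relation} and Theorem~\ref{thm:quadratic-finite}.

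The setting matches Theorem~\ref{thm:quadratic-finite} with $B=0$. Indeed $\Delta=-4AD$, and by \eqref{eq:branch-difference-QD} we have $\Theta=4D A^{-1}\,dz^2=4\Psi$, so horizontal trajectories of $\Theta$ and $\Psi$ coincide. The degree hypotheses force the positive branch at infinity to be $h=\sqrt{-D/A}\sim\sqrt{-\mathrm{lc}(D)}/z$. Here $\mathrm{lc}(D)<0$, so after choosing the sign $a_0=\sqrt{-\mathrm{lc}(D)}>0$.

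For (ii)$\Rightarrow$(i), take a positive mother body on $F_\Psi$. By Theorem~\ref{thm:branch-graph} its branches on the complementary components of the support are $\pm h$, and every edge carries the nonnegative jump \eqref{eq:positive-edge-jump}. Suppose some edge $e$ of $K_\Psi$ is attached to a simple cycle $\gamma$ from its interior. After $\gamma$ is deleted, the component containing the inside of $\gamma$ merges with the outside. Continuity of $F$ across the deleted cycle then fixes the sign of the branch near $e$ on both sides. The Strebel structure should force a sign mismatch: since the cycle bounds a ring domain, $h$ changes sign when one crosses $\gamma$ in the unsymmetrized picture. This mismatch makes the jump along $e$ have the wrong argument, i.e.\ negative mass, which is a contradiction.

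For (i)$\Rightarrow$(ii), assume no edge is attached from the interior of a cycle. Then I would label the components of $\bC\setminus F_\Psi$ with $\pm h$ by parity, starting with $+h$ on the unbounded one. On each edge of $F_\Psi$, $h\,dz$ has constant argument because the edge is a trajectory of $\Psi$. The labelling makes the jump equal to $\pm h\,dz/(\pi i)$ with a consistent real sign. Because $A$ and $D$ are coprime, the only poles of $h$ are the square-root singularities at zeros of $A$, which are integrable; so there are no point masses, and condition (ii) of Theorem~\ref{thm:branch-graph} holds vacuously. Local boundedness at vertices fails at zeros of $A$, so I would adapt the proof of Theorem~\ref{thm:branch-graph} using $L^1_{\rm loc}$ integrability of $|z-a|^{-1/2}$ instead of boundedness. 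Theorem~\ref{thm:branch-graph} then yields a positive mother body of mass $a_0$ on $F_\Psi\subset K$.

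The remaining assertions are immediate. If (ii) holds, the measure lies in $\M_f(K)$ because $F_\Psi\subset K$, so $\delta_P(f;K)=0$ by Proposition~\ref{prop:motherbody-relation}(i). Conversely, a zero-defect measure with planar-null support is a mother body by Proposition~\ref{prop:motherbody-relation}(ii). By Theorem~\ref{thm:quadratic-finite} its support is a subgraph of the critical graph $K_\Psi$. Positivity rules out closed cycles, since the jump along a closed trajectory would have to change sign or conflict with the parity labelling; hence the support is contained in $F_\Psi$, which gives (ii) and therefore (i).

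The main obstacle is the sign bookkeeping: showing that an edge attached from inside a cycle forces a negative jump, and conversely that the parity labelling is consistent on the forest. I would first try to settle this locally at a vertex on the cycle, comparing the arguments of $h\,dz$ along the incident trajectories with the three-prong (or $k$-prong) angle structure at a zero of $D$.
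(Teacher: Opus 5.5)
Your citations for the equivalence \textup{(i)}$\Leftrightarrow$\textup{(ii)} from \cite{BoSh} and for the defect assertions from Proposition~\ref{prop:motherbody-relation} match the paper's proof. So do the forward defect claim and the first step of the converse: zero defect plus $A(\supp\mu)=0$ gives a positive mother body.

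\textbf{The gap is in how you finish the converse.} You say the setting matches Theorem~\ref{thm:quadratic-finite} with $B=0$, and you use it to get $\supp\mu\subset K_\Psi$. That theorem, however, assumes that $\Theta$ has no closed horizontal trajectories and that $A$ and $B$ are coprime. Here $\Theta=4\Psi$ is Strebel, so almost every trajectory is closed, and $\gcd(A,0)=A$ is nonconstant. The theorem is therefore unavailable in exactly this setting.

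This is not a formality. Take $w^2=1/(z^2-1)$, whose trajectories are the confocal ellipses. Using $F=h$ outside an ellipse and $F=-h$ between the ellipse and $[-1,1]$ gives a real mother body. It carries positive mass $2$ on the ellipse and mass $-1$ on $[-1,1]$. So you must actually show that positivity confines the support to $K_\Psi$. The example also refutes your heuristic that the jump along a closed trajectory ``would have to change sign'': it is positive on the ellipse, and the negativity sits elsewhere.

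Even granting $\supp\mu\subset K_\Psi$ and an acyclic support, $\supp\mu\subset F_\Psi$ does not follow. $F_\Psi$ deletes \emph{every} edge lying on a simple cycle of $K_\Psi$, and an acyclic subgraph of $K_\Psi$ can still contain such edges. What you need is the support description that the paper takes from \cite[Theorem~13 and Proposition~14]{BoSh}. Existence of \emph{any} positive mother body for $Aw^2+D=0$ forces \textup{(i)}, with support $F_\Psi$, and \textup{(ii)} then follows from the equivalence. Replacing your appeal to Theorem~\ref{thm:quadratic-finite} with this citation turns your converse into the paper's.

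\textbf{Your self-contained sketches of \textup{(i)}$\Leftrightarrow$\textup{(ii)} are not proofs.}
\begin{itemize}
\item In \textup{(i)}$\Rightarrow$\textup{(ii)}, $\bC\setminus F_\Psi$ is connected because $F_\Psi$ is a forest, so there are no components to label by parity. What must be shown is twofold: $h$ continues single-valuedly from $\infty$ to $\bC\setminus F_\Psi$, and the jump $\frac{F_--F_+}{2\pi i}\,dz$ is nonnegative on every edge. The sign of that jump does not depend on the orientation you choose, so this is exactly where \textup{(i)} must enter, and you only assert it.
\item Your ``no point masses'' claim needs $A$ to have simple zeros. A double zero of $A$ is a double pole of $\Psi$, which a Strebel differential may have as the center of a circle domain. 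There $h$ has a genuine simple pole with real residue.
\item In \textup{(ii)}$\Rightarrow$\textup{(i)}, ``should force a sign mismatch'' is a heuristic, not an argument.
\end{itemize}
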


\begin{proof}
The equivalence and the description of the support follow from
\cite[Theorem~13 and Proposition~14]{BoSh}.  The defect assertions follow from
Proposition~\ref{prop:motherbody-relation}.
\end{proof}

\section{Stability}\label{sec:stability}

Let $P_n(z,w)=\sum_{j=0}^dp_{j,n}(z)w^j$ and
$P(z,w)=\sum_{j=0}^dp_j(z)w^j$ have the same $w$-degree $d$, and assume
$p_{j,n}\to p_j$ uniformly on a neighborhood of $K$.  Let $f_n,f$ be positive
algebraic germs satisfying $P_n(z,f_n(z))=0$ and $P(z,f(z))=0$, and assume that
$\M_{f_n}(K)$ and $\M_f(K)$ are nonempty.

\begin{theorem}\label{thm:stability}
Suppose $m_k(f_n)\to m_k(f)$ for every $k\ge0$.  Then
\[
 \liminf_{n\to\infty}\delta_{P_n}(f_n;K)\ge\delta_P(f;K).
\]
If a minimizer $\mu$ for $\delta_P(f;K)$ admits a recovery sequence
$\mu_n\in\M_{f_n}(K)$ with $\mu_n\to\mu$ weak-$*$, then
\[
 \delta_{P_n}(f_n;K)\longrightarrow\delta_P(f;K).
\]
\end{theorem}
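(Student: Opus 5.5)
The plan is the standard $\Gamma$-convergence argument: extract a weak-$*$ limit from a sequence of minimizers, and show that the defect functionals converge along weak-$*$ convergent sequences of measures.

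\emph{Liminf inequality.} Pass to a subsequence realizing $\liminf_n\delta_{P_n}(f_n;K)$ and pick minimizers $\mu_n\in\M_{f_n}(K)$; these exist by Theorem~\ref{thm:main-defect}. Each $\mu_n$ is positive with mass $m_0(f_n)\to m_0(f)=a_0$, so the masses are bounded by some $M$. By Lemma~\ref{lem:compact-class} we may pass to a further subsequence with $\mu_n\to\mu$ weak-$*$. Positivity and support in $K$ survive the limit, and $m_k(\mu)=\lim_n m_k(\mu_n)=\lim_n m_k(f_n)=m_k(f)$ because $\zeta^k$ is continuous on $K$. Hence $\mu\in\M_f(K)$ and $\delta_P(f;K)\le\mathfrak D_{P,K}(\mu)$. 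It therefore suffices to prove the following claim: if $\mu_n\to\mu$ weak-$*$ in $\mathcal P_M(K)$, then $\mathfrak D_{P_n,K}(\mu_n)\to\mathfrak D_{P,K}(\mu)$.

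\emph{Proof of the claim.} This is Proposition~\ref{prop:defect-continuity} with moving polynomials, and I expect it to be the only step needing care. Lemma~\ref{lem:L1-Cauchy-continuity} gives $C_n:=\Cauchy_{\mu_n}\to\Cauchy_\mu$ in $L^1(\Omega)$; pass to an a.e.\ convergent subsequence. For a.e.\ $z\in K$ the uniform convergence $p_{j,n}\to p_j$ on a neighborhood of $K$ gives $P_n(z,C_n(z))\to P(z,\Cauchy_\mu(z))$, since $(p_0,\dots,p_d,w)\mapsto\sum p_jw^j$ is jointly continuous. Hence $g_n:=|P_n(z,C_n)|^{1/d}\to g:=|P(z,\Cauchy_\mu)|^{1/d}$ a.e. on $K$. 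For domination, the coefficients $p_{j,n}$ are uniformly bounded on $K$ for large $n$, so \eqref{eq:polynomial-growth} holds with a constant $C$ independent of $n$. Lemma~\ref{lem:Lq} with some $q\in(1,2)$ then bounds $\|g_n\|_{L^q(K)}$ uniformly in $n$, and Vitali's theorem gives $\int_Kg_n\,dA\to\int_Kg\,dA$. The usual subsequence-of-subsequence argument upgrades this to convergence of the whole sequence. Integrating only over $K$ is legitimate because $\mathfrak D_{P_n,K}$ is defined by an integral over $K$, so Remark~\ref{rem:ambient} is not needed. The liminf inequality follows.

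\emph{Recovery part.} Let $\mu$ be a minimizer for $\delta_P(f;K)$ and $\mu_n\in\M_{f_n}(K)$ a recovery sequence with $\mu_n\to\mu$ weak-$*$. Its masses $m_0(f_n)$ are bounded, so the claim gives
\[
\limsup_n\delta_{P_n}(f_n;K)\le\lim_n\mathfrak D_{P_n,K}(\mu_n)=\mathfrak D_{P,K}(\mu)=\delta_P(f;K).
\]
Combining this with the liminf inequality proves $\delta_{P_n}(f_n;K)\to\delta_P(f;K)$.
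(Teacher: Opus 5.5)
Your proof is correct. Its skeleton matches the paper's: minimizers $\mu_n\in\M_{f_n}(K)$, weak-$*$ compactness from the bounded masses, passing the moment constraints to the limit, and the recovery sequence for the upper bound.

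The one real difference is how you handle the moving polynomials $P_n$ in the key step $\mathfrak D_{P_n,K}(\mu_n)\to\mathfrak D_{P,K}(\mu)$. The paper splits this step in two. It first proves a uniform perturbation estimate
\[
 \sup_{\nu\ge0,\ \supp\nu\subset K,\ \nu(K)\le M}
 \bigl|\mathfrak D_{P_n,K}(\nu)-\mathfrak D_{P,K}(\nu)\bigr|
 \le C\varepsilon_n^{1/d}\Bigl(A(K)+\sup_{\nu}\|\Cauchy_\nu\|_{L^1(K)}\Bigr),
\]
using the subadditivity inequality $\bigl||u|^{1/d}-|v|^{1/d}\bigr|\le|u-v|^{1/d}$ and Lemma~\ref{lem:Lq}. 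It then applies Proposition~\ref{prop:defect-continuity} to the fixed polynomial $P$ as a black box.

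You instead rerun the Vitali argument of that proposition. You use joint a.e.\ convergence of $\bigl(p_{0,n}(z),\dots,p_{d,n}(z),\Cauchy_{\mu_n}(z)\bigr)$ and a uniform $L^q$ bound with $n$-independent constants.

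Each route buys something different:
\begin{itemize}
\item The paper's route gives a quantitative rate $O(\varepsilon_n^{1/d})$ for the perturbation of the functional, uniform over all measures of bounded mass.
\item Your route gives no rate but assumes less. It needs only a.e.\ convergence $p_{j,n}\to p_j$ on $K$ plus $\sup_n\sup_K|p_{j,n}|<\infty$, rather than uniform convergence.
\end{itemize}
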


\begin{proof}
Set
$\varepsilon_n=\max_j\sup_K|p_{j,n}-p_j|$.  Since
$||u|^{1/d}-|v|^{1/d}|\le|u-v|^{1/d}$,
\[
 \sup_{\substack{\nu\ge0,\ \supp\nu\subset K\\ \nu(K)\le M}}
 |\mathfrak D_{P_n,K}(\nu)-\mathfrak D_{P,K}(\nu)|
 \le C\varepsilon_n^{1/d}
 \left(A(K)+
 \sup_{\substack{\nu\ge0,\ \supp\nu\subset K\\ \nu(K)\le M}}
 \|\Cauchy_\nu\|_{L^1(K)}\right),
\]
and the right-hand side tends to zero by Lemma~\ref{lem:Lq}.  Choose minimizers
$\nu_n\in\M_{f_n}(K)$.  Their masses are bounded, so a subsequence realizing
the lower limit converges weak-$*$ to a positive measure $\nu$ on $K$.  The
moment convergence gives $\nu\in\M_f(K)$, and
Proposition~\ref{prop:defect-continuity} yields
\[
 \liminf_n\delta_{P_n}(f_n;K)
 =\mathfrak D_{P,K}(\nu)\ge\delta_P(f;K).
\]
For a recovery sequence of a minimizer, the same uniform estimate and weak-$*$
continuity give the reverse upper bound.
\end{proof}

\section{Open problems}\label{sec:open}

\begin{question}[Null support]\label{q:null-support}
Let $\mu$ be a positive compactly supported measure and suppose that
$P(z,\Cauchy_\mu(z))=0$ almost everywhere for an irreducible
$P\in\bC[z,w]$.  Must $\supp\mu$ have planar measure zero?  Must it be a finite
union of compact analytic arcs and isolated points?
\end{question}

An affirmative answer would identify zero defect with the existence of a
positive mother body without an additional support hypothesis.

\begin{question}[Effective existence]
Give effective conditions on $P(z,w)$ and a compact convex set $K$ ensuring
that its positive branch has a mother-body measure supported in $K$.  In higher
degree, determine when the branches admit a finite positive graph as in
Theorem~\ref{thm:branch-graph}; for cyclic equations this includes finiteness of
the relevant $m$-differential trajectory graph and compatibility of the face
labels.
\end{question}

\begin{question}[Quantitative defect]
Find computable lower bounds for $\delta_P(f;K)$ arising from non-atomic
obstructions.  In the rational case, can the dual formula
\eqref{eq:rational-dual} be evaluated or bounded directly from multiple poles
or residues of the wrong sign?
\end{question}

\medskip\noindent
\emph{Acknowledgements.} The author wants to thank his friend and coauthor Rikard B\o gvad for his interest in this project. Some results have been obtained using GPT-5.6 Sol which has been provided with an initial draft of the manuscript. It also helped with restructuring, shortening and polishing of the original text.

\end{document}